\numberwithin{equation}{section}
\newtheorem*{rep@theorem}{\rep@title}
\newcommand{\newreptheorem}[2]{%
\newenvironment{rep#1}[1]{%
 \def\rep@title{#2 \ref{##1}}%
 \begin{rep@theorem}}%
 {\end{rep@theorem}}}
\newtheorem{theorem}{Theorem}[section]
\newtheorem{lemma}[theorem]{Lemma}
\newtheorem{proposition}[theorem]{Proposition}
\newtheorem{remark}[theorem]{Remark}
\theoremstyle{definition}
\theoremstyle{remark}
\newcounter{thmenumerate}
\newcounter{xenumerate}
\newcommand\ran{\operatorname{\mathrm ran}}
\newcommand\E{\operatorname{\mathbb E{}}}
\renewcommand\Pr{\operatorname{\mathbb P{}}}
\newcommand\Var{\operatorname{Var}}
\newcommand\Po{\operatorname{Po}}
\newcommand\Bi{\operatorname{Bi}}
\newcommand\eps{\varepsilon}
\renewcommand\phi{\varphi}
\newcommand\g{\gamma}
\newcommand\la{\lambda}
\def\ui{^{(1)}}
\def\uj{^{(j)}}
\def\ut{^{(2)}}
\def\uN{^{(N)}}
\newcommand\cA{\mathcal A}
\newcommand\cB{\mathcal B}
\newcommand\cF{\mathcal F}
\newcommand\cJ{\mathcal J}
\newcommand\cK{\mathcal K}
\newcommand\cP{\mathcal P}
\newcommand\cQ{\mathcal Q}
\newcommand\cX{\mathcal X}
\newcommand\wX{\widehat{\bX}}
\newcommand\wS{\widehat{S}}
\renewcommand\P{{\mathbb P}}
\newcommand\Z{{\mathbb Z}}
\newcommand\R{{\mathbb R}}
\newcommand\C{{\mathbb C}}
\def\Def{\ :=\ }
\newenvironment{proofof}[1]{\noindent {\bf
Proof of #1}.}{\hfill $\square$\par\smallskip\par}
\newcommand{\eqs}{\begin{eqnarray*}}
\newcommand{\ens}{\end{eqnarray*}}
\newcommand{\eq}{\begin{equation}}
\newcommand{\en}{\end{equation}}
\newcommand{\eqa}{\begin{eqnarray}}
\newcommand{\ena}{\end{eqnarray}}
\def\nin{\noindent}
\def\ex{{\mathbb E}}
\def\t{\tau}
\def\a{\alpha}
\def\th{\theta}
\def\Le{\ \le\ }
\def\law{{\mathcal L}}
\def\d{\delta}
\def\e{\varepsilon}
\def\Ref#1{(\ref{#1})}
\def\D{\Delta}
\def\dtv{d_{TV}}
\def\quarter{\tfrac14}
\def\half{\tfrac12}
\def\pr{{\mathbb P}}
\def\giv{\,|\,}
\def\Eq{\ =\ }
\def\r{\rho}
\def\b{\beta}
\def\s{\sigma}
\def\h{\eta}
\def\non{\nonumber}
\def\bx{{\mathbf x}}
\def\bu{{\mathbf u}}
\def\bv{{\mathbf v}}
\def\bw{{\mathbf w}}
\def\by{{\mathbf y}}
\def\bz{{\mathbf z}}
\def\bc{{\mathbf c}}
\def\bo{{\mathbf 0}}
\def\wrho{{\widehat \rho}}
\def\bJ{{\mathbf J}}
\def\trh{{\tilde\r}}
\def\Jmax{J_M^*}
\def\BBL{Barbour, Brightwell \& Luczak}
\def\BHKK{Barbour, Hamza, Kaspi \& Klebaner}
\def\BLX{Barbour, Luczak \& Xia}
\def\BP{Barbour \& Pollett}
\def\and{\ \mbox{and}\ }
\def\uii{^{(i)}}
\def\bone{{\mathbf{1}}}
\def\sJJ{\sum_{\bJ\in\cJ}}
\def\mJJ{\max_{\bJ\in\cJ}}
\def\hby{{\widehat\by}}
\def\DN{{\rm DN}}
\def\cB{{\mathcal B}}
\def\cP{{\mathcal P}}
\def\tm{{\widetilde m}}
\def\z{\zeta}
\def\bth{{\boldsymbol{\th}}}
\def\re{{\mathbb R}}
\def\f{\varphi}
\def\cY{{\mathcal Y}}
\def\hz{{\widehat\z}}
\def\be{{\mathbf e}}
\def\tbe{\mathbf{\tilde{e}}}
\def\tA{{\widetilde A}}
\def\nat{{\mathbb N}}
\def\ps{\psi}
\def\und{^{N,\d}}
\def\undt{^{N,\d_1/2}}
\def\undi{^{N,\d_1}}
\def\bX{{\mathbf X}}
\def\bY{{\mathbf Y}}
\def\bZ{{\mathbf Z}}
\def\lan{\langle}
\def\ran{\rangle}
\def\tit{{\widetilde t}}
\def\hS{{\widehat S}}
\def\bU{{\mathbf U}}
\def\bV{{\mathbf V}}
\def\hbX{{\widehat{\bX}}}
\def\hr{{\widehat r}}
\def\mJJ{\max_{\bJ\in\cJ}}
\def\bT{\mathbf{T}}
\def\la{\lambda}
\def\hbx{{\widehat\bx}}
\def\cZ{{\mathcal Z}}
\def\hC{{\widehat C}}
\def\tnu{\tilde{\nu}}
\def\nkt{\nu K_3}
\def\tnkt{\tnu K_3}
\def\sdd{\s}
\def\tcX{\widetilde{\cX}}
\def\hktt{p(K_3)}
\def\ts{\tilde{\sigma}}
\def\Xinf{\bZ^N_\infty}
\def\adbm{}
\def\adbe{} 
\def\adbf{} 
\def\grba{} 
\def\adbu{} 
\def\grbu{}
\def\adbd{}
\def\adbc{}
\def\adb{}
\def\adbb{}
\def\ignore#1{}
\def\adbh{} 
\def\adbi{}
\def\adbj{}
\def\adbk{}
\def\adbm{}
\begin{document}
\title
{Convergence to equilibrium for density dependent Markov jump processes}


\author{A. D. Barbour} \thanks{ADB: Work carried out in part at the University of Melbourne and at Monash University, and
supported in part by Australian Research Council Grants Nos DP150101459, DP150103588 and DP220100973}
\address{Institut f\"ur Mathematik, Universit\"at Z\"urich, Winterthurertrasse 190, CH-8057 Z\"urich}
\email{a.d.barbour@math.uzh.ch}

\author{Graham Brightwell}
\address{Department of Mathematics, LSE}
\email{g.r.brightwell@lse.ac.uk}

\author{Malwina Luczak} \thanks{MJL: Research currently supported by a Leverhulme International Professorship LIP-2022-005, previously supported by an ARC Future Fellowship FT170100409, previously supported by an EPSRC Leadership Fellowship EP/J004022/2, and by Australian Research Council Grants Nos DP150101459 and DP150103588.}
\address{Department of Mathematics, University of Manchester}
\email{malwina.luczak@manchester.ac.uk}

\keywords{Markov chains, concentration of measure, coupling, cutoff}
\subjclass[2000]{60J75, 60C05, 60F15}

\begin{abstract}
We investigate the convergence to (quasi-)equilibrium of a
density dependent Markov chain in~$\Z^d$, whose drift satisfies a system
of ordinary differential equations having an attractive fixed point.
For a sequence of such processes~$\bX^N$, indexed by a size parameter~$N$, the time taken until
the distribution of~$\bX^N$, started in some given state, approaches its \adbj{(quasi-)}equilibrium distribution~$\pi^N$
typically increases with~$N$.  To first order, it corresponds to the time~$t_N$ at which the solution to the
drift equations reaches a distance of~$\sqrt N$ from their fixed point.  However, the length
of the time interval over which the total variation distance between $\law(\bX^N(t))$ and its \adbj{(quasi-)}equilibrium
distribution~$\pi^N$ changes from being close to~$1$ to being close to zero is asymptotically of
smaller order than~$t_N$.  In this sense, the chains exhibit `cutoff', and we are able to prove that the cutoff window 
is of (optimal) constant size.
\end{abstract}

\maketitle

\section{Introduction}\label{S:intro}
\grba{This paper is concerned with sequences $(\bX^N)$ of continuous-time Markov chains whose state space
is a subset of $\Z^d$ for some $d$, making
jumps $\bX^N \to \bX^N +\bJ$ at rate $N r_\bJ(\bX^N/N)$, for $\bJ$ in some finite set $\cJ$, where each $r_\bJ$ is
a fixed continuously differentiable
function defined on a suitable closed region $\widehat S$ of $\R^d$.  Here, $N$ is a ``scale parameter'', often
representing population size.  Markov chains
of this type are known as {\em density-dependent} Markov chains, as their transition rates depend on the density 
$\bx^N \grbu{:= N^{-1}\bX^N}$.  They have \adbh{long} been used to model ecological and epidemiological processes, chemical kinetics
and queueing networks; \adbh{see, for example, Bartlett~(1960), McQuarrie~(1967) and Bailey (1964, Chapter 11)}. }

\grba{
The behaviour of \grbu{$\bx^N$}, for large $N$, is naturally approximated by solutions of the \adbh{system of
ordinary differential equations}
\eq\label{ADB-DE-first}
     \frac{d\by}{dt} \Eq \sum_{\bJ \in \cJ} \bJ r_\bJ(\by);
\en
\adbh{see Kurtz~(1970,1971).}
We are interested in the case where this differential equation has a locally stable fixed point $\bc$ in the
interior of $\widehat S$.
Within the basin of attraction of the fixed point $\bc$, $\bx^N$ has a (quasi)-equilibrium distribution which is
strongly concentrated
near $\bc$.  In this paper, we show that, under only very mild conditions, a density-dependent Markov chain
exhibits `cutoff': from any given
starting state within the basin of attraction of $\bc$, the total variation distance between the distribution of
the process at time $t$ and the
equilibrium distribution goes from near~1 to near~0 across an interval of time of constant length.  We prove essentially
best possible bounds on the
rates of convergence.}   

In Kurtz~(1970,1971), laws of large numbers approximations, in the form of systems of ordinary
differential equations, and diffusion (central) limit theorems were established for a
large class of density dependent Markov population processes~$\bX^N$ in~$d$ dimensions.  \adbh{These theorems}
give a good description of the evolution of such processes over fixed,
finite time intervals, when the typical magnitude~$N$ of the interacting populations is large.
\adbb{In particular, the proportions $\bx^N \grbu{=} N^{-1}\bX^N$ closely follow the solution~$\by$ to
\adbh{the ODE system~\Ref{ADB-DE-first},}
and the process $N^{1/2}\{\bx^N(\cdot) - \by(\cdot)\}$ is approximately Gaussian.}
The theory has since been refined in a number of ways, giving rates of convergence,
strong approximation theorems and equilibrium approximation; see Alm~(1978), Kurtz~(1981,
Chapter~8) and Darling \& Norris (2008). 

However, there are circumstances of practical interest which do not fit into this framework.
The most obvious of these is when the numbers of individuals in some of the populations are
small, so that a description of their evolution in terms of the solution of an ODE system is unlikely
to be very good.  A typical example is the initial phase in the introduction of a new species, when a few
individuals are introduced into a habitat, and compete for resources with a resident
population.  Clearly, there is a reasonable probability of colonization being unsuccessful,
and the length of time until the new species reaches a density comparable to that of the resident population,
should it become established, may be very long.  In such circumstances, direct application
of the theorems in Kurtz (1970) indicates that, over finite time intervals, the proportion of
individuals that belong to the new species remains negligible.  The biologically relevant
analysis is given in \BHKK~(2016), where is shown that
a branching process approximation is appropriate in the initial stages, and that, if
the new species becomes established, the ODE trajectory is eventually followed, but with
a random time delay.

In this paper, \adbh{as indicated above, we investigate the approach to equilibrium, another setting that
is not covered by the theorems of Kurtz (1970,1971).
We suppose that the ODE system~\Ref{ADB-DE-first}}
has a locally stable equilibrium~$\bc$, with open basin of attraction~$\cB(\bc)$.  Under appropriate conditions, 
Theorem~8.5 of Kurtz~(1981) establishes that~$\bX^N$ has an equilibrium distribution~$\pi^N$ which,
when suitably centred and normalized, converges to a multivariate normal distribution as $N\to\infty$.
Moreover, it is shown in \adbb{\BLX~(2018a, Theorem~5.3 and 2018b, Theorem~2.3)} that,
under a natural irreducibility condition,
$\pi^N$ then differs in total variation from a discrete multivariate normal distribution on~$\Z^d$ by an amount of order 
$O(N^{-1/2}\log N)$.  However, even when~$\bX^N$ does not have an equilibrium distribution, it is typically
the case that trajectories starting with~$\bX^N(0)$ near~$N\bc$ exhibit apparent equilibrium behaviour,
over very long time periods;  see Bartlett~(1960) and Barbour~(1976), for example.  The trajectories
mimic those of a process~$\bX\und$
that has the same transition rates as~$\bX^N$, except that transitions that lead out of the ball~$B(N\bc,N\d)$ are set
to zero.  This process has an equilibrium distribution~$\pi\und$ that is approximately multivariate normal, as above.
It follows from \BP\ (2012), Theorems 4.1 and~2.3, that, if $\bX^N(0) \in B(N\bc,N\d)$, the total variation distance
between~$\pi\und$
and the distribution of~$\bX^N(t)$ is small for a range of times $t \in [s_1(N), s_2(N)]$, where~$s_1(N)$
grows polynomially in~$N$, and~$s_2(N)$ grows exponentially with~$N$ (Theorem~4.1 of \BP\ (2012) shows this for a
slightly different modification of the process~$X^N$, and Theorem~2.3 can be used to show that the difference in 
the modification is insignificant).
In this sense, $\pi\und$ can be understood as a {\it quasi-equilibrium\/} distribution for~$\bX^N$;  however,
as discussed in \BP\ (2012), it need not be
the equilibrium distribution of~$\bX^N$, or even a quasi-stationary distribution.

Our main result, Theorem~\ref{thm:main}, is concerned with describing the approach to quasi-equilibrium more
precisely than in \BP\ (2012).
Under quite broad assumptions, we show that, for $\bx^N(0) = N^{-1}\bX^N(0)$ away from the boundary of~$\cB(\bc) \setminus \{\bc\}$,
the total variation distance between the distributions $\law_{\bX^N(0)}(\bX^N(t))$ and~$\pi\und$
decreases from close to~$1$ to close to~$0$
within a window of fixed width about a time~$t_N(\bx^N(0))$, which itself
grows logarithmically with~$N$.  That is, for any $\e > 0$ small enough, there exists $\d_\e < \infty$, not depending on~$N$, 
such that
the total variation distance $\dtv\{\law_{\bX^N(0)}(\bX^N(t_N(\adbh{\bx^N(0)}) - \d_\e)),\pi\und\}$
is at least $1-\e$, whereas $\dtv\{\law_{\bX^N(0)}(\bX^N(t_N(\adbh{\bx^N(0)}) + \d_\e)),\pi\und\}$ is at most~$\e$.
Note that~$t_N(\adbh{\bx^N(0)})$ grows much more slowly with~$N$ than does~$s_1(N)$, confirming that the lower bound
given by \BP\ (2012), Theorem~4.1, on the interval of times~$t$ at which $\law(\grbu{\bX}^N(t))$ is close to quasi-equilibrium,
is unduly pessimistic.
We show further that~$\pi\und$ is strongly concentrated around~$N \bc$.

Cutoff has recently been examined in a particular, one-dimensional example of our setting in He, Luczak \& Ross~(2025).
\adbj{Two examples were also considered in \BBL\ (2022), for which contractive couplings could be established.
As mentioned following the proof of Theorem~\ref{conc-thm}, a major difficulty in the very general setting of this paper 
is that the couplings established in Theorem~\ref{thm:main1}~(iv) are not
{\it everywhere\/} contractive, and that arguments going beyond those of \BBL\ (2022) are needed.}  

\adbj{In Barrera \& Jara (2016, 2020), results of a very similar flavour have been obtained in 
a quite different context, that
of a differential flow with small superimposed standard Gaussian \adbm{noise}.  Near the equilibrium of the flow,
such a process behaves like an Ornstein--Uhlenbeck process, for which a very precise description
of the approach to equilibrium is known.  In the setting of Markov population processes, there
is no analogous standard family of processes available for comparison, and the arguments required here
(in particular, Lemma~\ref{ADB-coupling-lemma} and Proposition~\ref{ADB-close-TV}, to replace Mehler's formula)
are correspondingly more involved.  Barrera \& Jara (2020) base their arguments on the assumption of a strong coercivity
condition, to be satisfied throughout the state space.  Here, we only need a condition of this sort,
\adbm{\Ref{ADB-coercivity}},
to be satisfied at the equilibrium point, and \adbm{Lemma~\ref{lem:matrix-M} shows} that this holds automatically.
}

\subsection{Definition of the process, assumptions and main result}
\label{def-main-thm}

\smallskip

\grbu{
Let $\widehat S$ be a closed subset of $\R^d$.  For each $N \ge 1$, define $N\widehat S := \{ N {\bx} : \bx \in \widehat S\}$, and 
let $S_N = \Z^d \cap N \widehat S$ be the set of integer points in $N \widehat S$.  For each $N \ge 1$, let 
$\bX^N$ be a pure jump Markov process on $S_N$.  It follows that the scaled process $\bx^N := N^{-1}\bX^N$ is a Markov process on 
$N^{-1}\Z^d \cap \widehat S$.}  

We make the following assumptions:


\begin{description}
 \item [Assumption 1]
For $\bX \in S_N$, the only possible transitions are to states $\bX + \bJ$, where $\bJ$ belongs to
a fixed \adb{finite} set $\cJ \subset \Z^d$; the corresponding rates for~\grbu{$\bX^N$} are given by
\eq\label{transition-rates}
  \bX \to \bX + \bJ \quad \mbox{at rate}\quad N r_{\bJ}(N^{-1}\bX),
\en
where the functions $r_{\bJ} \colon \widehat S \to \R^+$ are continuously differentiable.

\medskip
\item [Assumption 2]
The set of jumps
${\mathcal J}$ is {\it spanning\/}, in that every vector in ${\mathbb Z}^d$ can be written as a sum of jumps 
in ${\mathcal J}$.
\end{description}

\nin In particular, Assumption~2 means that \adbu{any $\bX' \in \Z^d$ can be reached from any other
$\bX \in \Z^d$ via a path of jumps in~$\cJ$.} 
Note that the spanning condition is equivalent to the following: for each 
\adbk{coordinate vector~$\be\uii$, $1\le i\le d$,} it is
possible to write
\eq\label{ADB-irreducible}
  \adbe{ \be\uii \Eq \sum_{l=1}^{L_i^+} \bJ^{il}_+;\qquad -\be\uii \Eq \sum_{l=1}^{L_i^-} \bJ^{il}_-,}
\en
where, for all $i,l$, \adbe{$\bJ^{il}_+, \bJ^{il}_- \in \cJ$}.

\begin{description}
 \item [Assumption 3]
There is a fixed point $\bc$ of \adbe{the differential equation
\begin{equation} \label{eq:diff-eq}
    \frac{d\by}{dt} \Eq F(\by) \Def \sum_{{\bJ} \in \cJ} {\bJ} r_{\bJ}({\by})
\end{equation}
in} the interior of $\widehat{S}$, having basin of attraction~$\cB(\bc)$;
thus $F(\bc) = 0$. Assume further that $r_\bJ(\bc) > 0$ for all $\bJ \in \cJ$, and that there exists a
strictly positive constant~$\rho$
such that the real parts of all the eigenvalues of
$A := \sum_{{\bJ} \in \cJ} {\bJ} \nabla r_{\bJ}(\bc)$ are strictly less than~$-\rho$,
\adbu{where $\nabla r_{\bJ}({\by})$  denotes the row vector of partial derivatives of~$r_\bJ$.}
\end{description}



\smallskip
Under Assumption~3, we show in \adbj{Theorem~\ref{thm:main1} and}
Lemma~\ref{lem:matrix-M} that there is a norm $\|\cdot\|_M$ on~$\re^d$ and a $\d_0 > 0$
such that \grba{$B_M(\bc,\d_0) \subseteq \widehat{S}$, and} the ODE system~\Ref{eq:diff-eq} is $\|\cdot\|_M$-contractive 
within $B_M(\bc,\d_0)$, where
\eq\label{ADB-BM-def}
  B_M(\bz,\e) \Def \{\bx\in\re^d\colon\, \|\bx - \bz\|_M \le \e\};
\en
\adbk{and that, further, it is possible within $B_M(\bc,\d_0)$ to construct a coupling of pairs of copies of~$\bX^N$ that
is contractive as long as they are not very close to one another.}

\begin{remark}\label{ADB-d_1-def}{\rm
Since $r_{\bJ}(\bc) > 0$ for each~$\bJ \in \cJ$, by Assumption~3, continuity of the functions~$r_{\bJ}(\cdot)$
implies that there exist $r_0,\d_1 > 0$ such that $r_{\bJ}(\bx) \ge r_0 > 0$ for all $\bx \in B_M(\bc,\d_1)$ and
for all~$\bJ \in \cJ$.  
\adbu{Provided
that~$N$ is sufficiently large, it follows using Assumption~2 that
there is a path from any element of $\Z^d \cap B_M(N\bc,N\d_1/2)$ to any other, not leaving $B_M(N\bc,N\d_1)$
--- see Lemma~\ref{intvec} ---
and then Assumption~1 implies that all elements of $\Z^d \cap B_M(N\bc,N\d_1/2)$ belong to the
same communicating class for the Markov chain~$\bX^N$.}

Assumption~3 \grba{ensures, moreover}, that, if $\bX' = \bX + \be\uii$ and the chain starts
in~$\bX \in B_M(N\bc,N\d_1/2)$, then 
the chain follows exactly the path given in~\Ref{ADB-irreducible} from $\bX$ to~$\bX'$
within a time interval of length $N^{-1}$,
with probability bounded away from zero as $N\to\infty$.  Without loss of generality, we take $\d_1 \le \d_0$,
where~$\d_0$ is as in Theorem~\ref{thm:main1}.}
\end{remark}

For a given starting state~${\by}_0 \in \widehat S$, the differential \adbe{equation~\Ref{eq:diff-eq}}
has a unique solution~$\by_{[\by_0]}$ in~$\widehat S$, up to the time $T({\by}_0) \le \infty$ at which
it leaves~$\wS$.
For $\bx \in \cB(\bc)$, define
\begin{eqnarray}
\label{eq.cutoff-time}
    t_N(\bx) \Def \inf\{t > 0\colon \|\by_{[\bx]}(t) - \bc\|_M = N^{-1/2}\}.
\end{eqnarray}
Note that, under many circumstances, as $N \to \infty$ and for fixed $\bx \in \cB(\bc) \setminus \{\bc\}$,
\[
    t_N(\bx) \ \sim\ (1/2\wrho)\log N + t(\bx),
\]
as $N \to \infty$, for some $t(\bx) \in \R$ that does not vary with~$N$;
here, $-\wrho$ denotes the largest \adbb{real part of any} eigenvalue of~$A$.

\adbd{
The following definition of (generalized) cutoff is essentially that of Barbour, Brightwell, Luczak (2022, Section 1.2,
equation~(1.1)).
Let $(X^N)_{N \ge 1}$ be a sequence of pure jump Markov chains with state space $S_N$ \adbb{and let~$\pi^N$
be a distribution on~$S_N$.}
For $E_N$ a subset of the state space $S_N$, let $(\tit_N (X),\, X \in E_N)$
be a collection of non-random times, and let $(w_N)$ be a sequence of numbers
such that $\lim_{N \to \infty} \inf_{X \in E_N} \tit_N (X)/w_N = \infty$.}

\adbd{We say that $X^N$ exhibits {\it cutoff\/} \adbb{with respect to~$\pi^N$} at time $\tit_N (X)$ on $E_N$
with {\it window width} $w_N$, if there exist (non-random) constants
$(s(\eps), \varepsilon > 0)$
such that, for any $\varepsilon > 0$ and for all $N$ large enough,
\begin{eqnarray}
& \dtv ( {\mathcal L}_X (X^N (\tit_N (X) - s (\eps) w_N)), \pi^N) > 1-\eps \label{def.cutoff-low}\\
& \dtv ( {\mathcal L}_X (X^N (\tit_N (X) + s (\eps) w_N)), \pi^N) < \eps \label{def.cutoff-up}
\end{eqnarray}
uniformly for all $X \in E_N$.}

For \adbh{$0 < \delta < \delta_0$, write
\eq\label{ADB-cX-def}
   \cX_N(\delta) \Def B_M(N\bc,N\d) \cap S_N,
\en
and} let~$\bX\und$ denote a Markov process on~$\cX_N(\d)$, having the same transition
rates as~$\bX^N$, except that the rates for transitions out of~$\cX_N(\d)$ are set to zero.
\adbd{Since~$\bX\und$ is a chain on a finite state space, and is irreducible, in view of Assumption~2,
it has an equilibrium distribution~$\pi\und$.}
We are now in a position to state the main theorem of the present article.

\begin{theorem}
\label{thm:main}
Suppose that $\bX^N$ is a sequence of density dependent Markov population processes satisfying Assumptions~1--3.
\adbd{Fix any} $0 < \d \le \adbj{\d_1}$, where~$\d_1$ is as in Remark~\ref{ADB-d_1-def}.
Then, for any compact set $\cK \subset \cB(\bc) \setminus \{\bc\}$,
$\bX^N$ exhibits cutoff with respect to~$\pi\und$ at time $t_N(N^{-1}\bX)$
on~$N\cK := \{\bX \in S_N\colon\, N^{-1}\bX \in \cK\}$
with constant window width; that is, we can take $\tit_N (X) = t_N(N^{-1}\bX)$ and \adbj{$w_N = 1$} 
in the definition of cutoff above.

Moreover, $\pi\und$  is well concentrated \adbd{around~$N\bc$; that is, 
there exist positive constants \adbj{$b_1$ and~$b_2$} 
such that
\eq\label{ADB-conc-1.2}
  \pi^{N,\delta}\{\adbk{\|\bX - N\bc\|_M} \ge  m \} \Le 4d\exp\left( - \frac {m^2}{\adbj{N b_1  + m b_2}}\right).
\en
}
\end{theorem}


\adbd{The second part of Theorem~\ref{thm:main} follows from Theorem~\ref{ADB-equilibrium}.
The first} is re-stated as Theorem~\ref{ADB-general-starting-point},
which is itself derived from Theorems \ref{lower-bound} and~\ref{upper-bound}.
The argument runs as follows.  In Section~\ref{Density-dependent}, we show that, under reasonable assumptions,
a Markov population process is `contractive' in a ball $B_M(N\bc,N\d_0)$.
From this, in Section~\ref{concentration},
we deduce concentration of the process $\bx^N$ \adbj{around} the solution~$\by_{[\bx^N(0)]}$ of the system~\Ref{eq:diff-eq}
that has the same initial state~$\bx^N(0)$, up to times of order $O(N)$,
provided that \adbm{$\bx^N(0)$} is near enough to~$\bc$.
We then show in Section~\ref{cutoff}
that $\bx^N(t_N(\bx^N(0))-\grba{s})$ is $O(N^{-1/2})$ close to $\by_{[\bx^N(0)]}(t_N(\bx^N(0))-\grba{s})$, and that
$\|\by_{[\bx^N(0)]}(t_N(\bx^N(0))-\grba{s})-\bc \adbk{\|_M} > \adbj{\phi}(\grba{s})N^{-1/2}$,
with $\adbj{\phi}(\grba{s}) > 1$ growing with~$\grba{s}$,
from which the lower bound on the speed of convergence is deduced.

For the upper bound, we use contraction to couple a copy of the
process $\bX^N$ starting in $\bX^N(t_N(\bx^N(0)))$ with another copy starting with the equilibrium
distribution~$\pi\und$, showing that the \adbk{$\|\cdot\|_M$}-distance between them can, with high probability, be reduced to less
than $\th N^{1/2}$, for any prescribed $\th > 0$, after an elapsed time $t(\th)$ not depending on~$N$.  \adbj{We then
use Proposition~\ref{ADB-close-TV}}
to show that, after a further elapsed time~$t'$, not depending on~$N$,
the total variation distance between~$\law(\bX^N(t_N(\bx^N(0)) + t(\th) + t'))$ and the equilibrium
distribution~$\pi\und$ is at most \adbj{$k_*\th$}, for some fixed~\adbj{$k_* > 0$}.  Thus, choosing $\th = \e/\adbj{k_*}$, the
second claim is justified.  

\grba{The bounds that we obtain in Section~\ref{cutoff} are of the form
\[
     \dtv\bigl(\law(\bX^N(\adbj{t_N(\bx^N(0))}+s)),\pi\adbj{\undi}\bigr)
           \ \ge\ 1 - C \exp\bigl\{-k e^{2\r |s|}\bigr\},
\]
for negative $s$, and 
\[
     \dtv\bigl(\law(\bX^N(\adbj{t_N(\bx^N(0))}+s)),\pi\adbj{\undi}\bigr) \Le  C' e^{-\r s},
\]
for positive~$s$, where $C$, $k$ and $C'$ are constants.  We show in Example~1 that these convergence rates
are in fact best possible,
up to the values of the constants: our example is a simple 1-dimensional immigration-death model,
where the total variation distance can
be approximated explicitly, and matches the convergence rates above.}  
We conclude with a \grba{further} simple example.

\section{Preliminaries}\label{Density-dependent}
\setcounter{equation}{0}

\adbk{Th first step in our arguments is to} show that, under Assumptions~1--3, there is a norm with respect to which both the
\adbk{ODE system~\Ref{ADB-DE-first}} and the Markov
process~\adbk{$\bx^N := N^{-1}\bX^N$} exhibit `contractive' behaviour in some region around~$\bc$.  The norm is derived
in the standard way from
a real $d \times d$ symmetric positive definite matrix~$M$.  The matrix~$M$ gives rise to an inner product
$\langle \cdot , \cdot \rangle_M$ on
$\C^d$, defined by $\langle {\bz}, {\bw} \rangle_M = {\bz}^T M \overline{\phantom |\!\bw}$, and the norm $\| \cdot \|_M$
is defined by $\|{\bz}\|_M = \langle {\bz}, {\bz} \rangle_M^{1/2}$.
We write $H(\bz,\bw) := \|\bz-\bw\|_M$ and $G(\bz) := H(\bz,\bc)$,
and we let~$\cQ^N$ denote the generator of \adbm{$\bx^N \grbu{=} N^{-1} \bX^N$}.
\adbe{The functions $G$ and~$H$ provide the basis for deterministic and stochastic Lyapounov arguments.}

\begin{theorem} \label{thm:main1}
Under Assumptions~1--3,
there exists a $d \times d$ symmetric positive definite matrix~$M$ and a constant $\delta_0 > 0$, such that:
\begin{itemize}
\item [(i)] if ${\by}$ is a solution of~(\ref{eq:diff-eq}) with $\|{\by}(0) - \bc \|_M \le \delta_0$, then,
         for all $t \ge 0$,
$$ 
    \frac{d}{dt} \| {\by}(t) - \bc \|_M \Le -\rho \| {\bf y}(t) - \bc \|_M;
$$
\item [(ii)] if ${\by}$ and ${\bz}$ are two solutions of~(\ref{eq:diff-eq}) with
  $\|{\by}(0) - \bc\|_M \le  \delta_0$ and $\|{\bz}(0) - \bc\|_M \le \delta_0$, then, for all $t \ge 0$,
$$
    \frac{d}{dt} \| {\by}(t) - {\bz}(t)\|_M \Le -\rho \| {\by}(t) - {\bz}(t)\|_M.
$$
\end{itemize}
Moreover, there exist $K_1, K_2 \in \R$ such that the following hold whenever $N$ is sufficiently large:
\begin{itemize}
\item [(iii)] For all ${\bX} \in S_N$ with
$\delta_0 \ge G(N^{-1}{\bX}) \ge K_1 N^{-1/2}$, we have
$$
   {\mathcal Q}^N G(N^{-1}{\bX})  \le - \rho G(N^{-1}{\bX});
$$
\item [(iv)]
There exists a Markovian coupling $(\bU^N,\bV^N)$ of two copies of~$\bX^N$, whose
generator~$\cA^N$ is such that, for all ${\bU}, {\bV} \in {S}_N \cap B_M(N\bc,N\d_0)$ with
$H({\bU}, {\bV}) \ge K_2$,
$$
   \cA^N H({\bU},{\bV}) \Le - \rho H({\bU}, {\bV}).
$$
\end{itemize}
\end{theorem}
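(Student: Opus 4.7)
The cornerstone is the standard Lyapunov construction of~$M$: since all eigenvalues of~$A$ have real part strictly less than~$-\rho$, there exists $\rho^\ast>\rho$ such that the eigenvalues of $A+\rho^\ast I$ still all have negative real part, and the Lyapunov theorem then produces a real symmetric positive definite~$M$ satisfying $A\TT M+MA\preceq -2\rho^\ast M$ (explicitly, $M:=\int_0^\infty e^{(A+\rho^\ast I)\TT t}e^{(A+\rho^\ast I)t}\,dt$ works). All four parts then reduce to perturbations of this linear estimate. For (i), write $F(\by)=A(\by-\bc)+r_1(\by-\bc)$ with $|r_1(\bu)|=o(|\bu|)$ as $\bu\to\bo$, available from the $C^1$ regularity of the $r_\bJ$; differentiating $G(\by)^2=(\by-\bc)\TT M(\by-\bc)$ along the flow yields
\[
   \tfrac{d}{dt}G(\by)^2 \Eq (\by-\bc)\TT(A\TT M+MA)(\by-\bc)+2(\by-\bc)\TT Mr_1(\by-\bc)\Le -2\rho^\ast G(\by)^2+o(G(\by)^2),
\]
and choosing $\delta_0$ small enough bounds this by $-2\rho G(\by)^2$, giving (i) after taking a square root. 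Part (ii) is identical, applied to $F(\by)-F(\bz)=A(\by-\bz)+E(\by,\bz)(\by-\bz)$ with $|E(\by,\bz)|\to0$ as $\by,\bz\to\bc$, available from the mean value theorem and continuity of $DF$.

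For part (iii), I expand
\[
   \cQ^N G(N^{-1}\bX) \Eq \sJJ Nr_\bJ(N^{-1}\bX)\bigl[G(N^{-1}\bX+N^{-1}\bJ)-G(N^{-1}\bX)\bigr]
\]
by second-order Taylor. The first-order term is $\nabla G(\bx)\cdot F(\bx)$ with $\bx:=N^{-1}\bX$, and the same computation as for~(i) gives $\nabla G(\bx)\cdot F(\bx)\le -\rho^\ast G(\bx)+o(G(\bx))$ on $B_M(\bc,\delta_0)$. The Hessian of $G$ has size $O(1/G)$, and the hypothesis $G(\bx)\ge K_1N^{-1/2}$ combined with $N^{-1}|\bJ|=O(N^{-1})$ keeps every intermediate point clear of $\bc$, so the second-order remainder contributes $O(N^{-1}/G(\bx))$; taking $K_1$ large enough makes this smaller than $(\rho^\ast-\rho)G(\bx)/2$, and (iii) follows.

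For part (iv), I use the synchronous coupling: for each $\bJ\in\cJ$ the pair $(\bU,\bV)$ jumps jointly to $(\bU+\bJ,\bV+\bJ)$ at rate $N\min(r_\bJ(N^{-1}\bU),r_\bJ(N^{-1}\bV))$, $\bU$ alone jumps by $\bJ$ at rate $N[r_\bJ(N^{-1}\bU)-r_\bJ(N^{-1}\bV)]^+$, and symmetrically for $\bV$, which gives the correct marginals. Only unpaired jumps affect $\bU-\bV$, and Taylor-expanding $h(\bz):=\|\bz\|_M$ around $\bz=\bU-\bV$ (valid because $\|\bU-\bV\|_M\ge K_2$ keeps the expansion point away from~$\bo$) collapses the leading term to
\[
   \nabla h(\bU-\bV)\cdot N[F(N^{-1}\bU)-F(N^{-1}\bV)] \Eq (\bU-\bV)\TT MA(\bU-\bV)/\|\bU-\bV\|_M+O(\delta_0)\|\bU-\bV\|_M,
\]
where the $O(\delta_0)$ error comes from writing $F(N^{-1}\bU)-F(N^{-1}\bV)=\bigl(\int_0^1 DF(\cdot)\,ds\bigr)N^{-1}(\bU-\bV)$ and replacing $DF$ by $A$ within $B_M(\bc,\delta_0)$; using $A\TT M+MA\preceq -2\rho^\ast M$ bounds this by $-\rho^\ast\|\bU-\bV\|_M+O(\delta_0)\|\bU-\bV\|_M$. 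The second-order Taylor correction is $O(1)$ uniformly, since $\sJJ N|r_\bJ(N^{-1}\bU)-r_\bJ(N^{-1}\bV)|=O(|\bU-\bV|)$ while the Hessian of $\|\cdot\|_M$ contributes $O(|\bJ|^2/\|\bU-\bV\|_M)$, and the norm factors cancel up to equivalence of $|\cdot|$ and $\|\cdot\|_M$. Picking $\delta_0$ small and $K_2$ large enough that $O(\delta_0)\|\bU-\bV\|_M+O(1)\le(\rho^\ast-\rho)\|\bU-\bV\|_M$ yields (iv).

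The main technical obstacle is the simultaneous handling of two non-smoothnesses: both $\|\by-\bc\|_M$ at $\by=\bc$ and $\|\bz\|_M$ at $\bz=\bo$ have Hessians that blow up like $1/G$ and $1/H$ respectively, and these singular Hessians produce precisely the stochastic Taylor corrections that force the thresholds $K_1N^{-1/2}$ and $K_2$ into the statement. The numerical values of $K_1,K_2$, and the size of $\delta_0$, are then fixed by balancing these corrections together with the nonlinear error in $F$ against the Lyapunov slack $\rho^\ast-\rho$.
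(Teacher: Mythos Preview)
Your proposal is correct and follows the same architecture as the paper: construct $M$ so that $\langle \bx,A\bx\rangle_M\le-\rho^\ast\|\bx\|_M^2$ for some $\rho^\ast>\rho$, reduce (i)--(ii) to a perturbation of the linear estimate via continuity of $DF$ near $\bc$, and for (iii)--(iv) expand the generator action on $G$ (resp.\ $H$) to second order, balancing the $O(N^{-1}/G)$ (resp.\ $O(1)$) correction against the slack $\rho^\ast-\rho$ via the thresholds $K_1N^{-1/2}$ and $K_2$, with the same synchronous coupling in (iv). The only technical deviations are that the paper builds $M$ by an explicit Jordan-form construction (its Lemma~2.4) rather than your Lyapunov integral, and replaces your second-order Taylor step by an exact algebraic bound $\bigl|\|\ba+\bb\|-\|\ba\|-\langle\ba,\bb\rangle/\|\ba\|\bigr|\le 2\|\bb\|^2/\|\ba\|$ (its Lemma~2.5), which spares one the check that the Taylor intermediate point stays away from the singularity; both substitutions are routine and yield identical error terms.
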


To obtain the matrix~$M$, we use the following
lemma, which may be  known in various contexts, but we have not been able to find a reference.


\begin{lemma} \label{lem:matrix-M}
Suppose $A$ is a $d \times d$ real matrix, such that all eigenvalues of $A$ have real part strictly less than $-\rho < 0$.
Then there is a real $d\times d$ symmetric positive definite matrix $M$ such that
\eq\label{ADB-coercivity}
   \langle {\bx}, A {\bx} \rangle_M \adb{\Le} - \rho \|{\bx}\|_M^2\ \mbox{ for all }\ {\bx} \in \R^d.
\en
\end{lemma}

Note: if $A$ is diagonalisable, and we assume only that all the eigenvalues have real part {\em at most} $-\rho$,
then we can obtain a matrix $M$
such that $\langle {\bf x}, A {\bf x} \rangle_M \le - \rho \|{\bf x}\|_M^2$ for all ${\bf x} \in \R^d$.  However,
this slightly cleaner result will not be of use to us in what follows.

\begin{proof}
We choose a constant $\mu > 0$ such that $\Re (\lambda) + \mu \le - \rho$ for all eigenvalues $\lambda$ of~$A$.

By standard theory, we can put the matrix $A$ into Jordan Normal Form.  This means that, for each
eigenvalue $\lambda$ of $A$, of algebraic
multiplicity $m_\lambda$, there is a basis for the nullspace of the matrix $(A-\lambda I)^{m_\lambda}$
consisting of vectors
${\bf v}_{\lambda,1}, \dots, {\bf v}_{\lambda,m_\lambda}$ such that each ${\bf v}_{\lambda,i}$ is either
an eigenvector of $A$ with eigenvalue $\lambda$, or
it satisfies $A {\bf v}_{\lambda,i} = \lambda {\bf v}_{\lambda,i} + {\bf v}_{\lambda,i-1}$.  We multiply
each ${\bf v}_{\lambda,i}$ by a suitable
real scalar to obtain a basis $B_\lambda= \{ {\bf u}_{\lambda,i} : i=1, \dots, m_\lambda \}$ of the nullspace,
each element of which is either an
eigenvector with eigenvalue $\lambda$, or satisfies
$A {\bf u}_{\lambda, i} = \lambda {\bf u}_{\lambda,i} + \mu {\bf u}_{\lambda, i-1}$.  Moreover, if
${\bf u}_{\lambda,i}$ appears in the basis
$B_\lambda$, we may take its conjugate to appear in $B_{\overline \lambda}$: i.e., we may assume that
${\bf u}_{\overline \lambda, i} = \overline\bu_{\lambda,i}$ for each non-real $\lambda$ and each $i$.
Finally, taking the unions of the bases $B_\lambda$, we obtain a basis $B$ for $\C^d$.

Now we let $P$ be a $d \times d$ matrix whose columns are the vectors ${\bf u}_{\lambda,i}$ appearing
in the basis~$B$, and set
$M = (P^{-1})^T\overline {P^{-1}}$.  We claim that $M$ has the required properties.

First of all, we claim that $M$ is real.  To see this, we note that $\overline P$ can be obtained from $P$
by exchanging some pairs of columns, so by
post-multiplying by a matrix $C$ with the properties that $C = \overline C = C^T = C^{-1}$.  Therefore we have
$$
  \overline{M} = \overline{(P^{-1})^T} P^{-1} = (\overline{P}^{-1})^T \overline{\overline{P}^{-1}}
    = ((PC)^{-1})^T \overline{(PC)^{-1}}
$$
$$
    = (P^{-1})^T (C^{-1})^T \overline{C^{-1}} \overline{P^{-1}} = (P^{-1})^T \overline{P^{-1}} = M.
$$
Thus indeed $M$ is real.

The matrix $M$ is chosen so that, for any pair ${\bf u} = {\bf u}_{\lambda,i}$ and
${\bf u}' = {\bf u}_{\lambda',i'}$ of distinct vectors in the basis $B$, we have
$$
\langle {\bf u}, {\bf u}' \rangle_M =
{\bf u}^T M \overline{{\bf u}'} = {\bf u}^T (P^{-1})^T \overline{P^{-1}} \overline{{\bf u}'}
= (P^{-1}{\bf u})^T \overline{P^{-1} {\bf u}'},
$$
and this is zero since $P^{-1}{\bf u}$ and $P^{-1}{\bf u}'$ are different standard basis vectors in $\C^d$.
Similarly we have
$\langle {\bf u}, {\bf u} \rangle_M = 1$ for each ${\bf u} \in B$.  This implies that $M$ is symmetric
and positive definite, and that $B$ is an
orthonormal basis of $\C^d$ with respect to the inner product $\langle \cdot, \cdot \rangle_M$.

Now we write any ${\bf x} \in \R^d$ as a linear combination of the basis elements
${\bf x} = \sum_{\lambda} \sum_{i=1}^{m_\lambda} \alpha_{\lambda,i} {\bf u}_{\lambda,i}$, so
$\| {\bf x} \|_M^2 = \sum_\lambda \sum_{i=1}^{m_\lambda} | \alpha_{\lambda,i}|^2$.  For the
real eigenvalues $\lambda$, each $\alpha_{\lambda,i}$ is real. For the non-real eigenvalues $\lambda$, we have
$\alpha_{\overline{\lambda},i} = \overline{\alpha}_{\lambda,i}$ for each~$i$.  Let now $\adbj{\cP}$ be the set of
pairs $(\lambda,i)$ such that
$A {\bf u}_{\lambda,i} = \lambda {\bf u}_{\lambda,i} + \mu {\bf u}_{\lambda,i-1}$.  Then we have
$$
  A{\bf x} \Eq \sum_{\lambda} \sum_{i=1}^{m_\lambda} \lambda \alpha_{\lambda,i} {\bf u}_{\lambda,i}
     + \mu \sum_{(\lambda,i) \in J} \alpha_{\lambda,i} {\bf u}_{\lambda, i-1},
$$
and so
\begin{equation} \label{eq:eq1}
\langle {\bf x}, A{\bf x} \rangle_M = \sum_{\lambda} \sum_{i=1}^{m_\lambda} \overline \lambda |\alpha_{\lambda,i}|^2
+ \mu \sum_{(\lambda,i) \in J} \alpha_{\lambda,i-1} \overline{\alpha_{\lambda,i}}.
\end{equation}
For $(\lambda,i) \in \adbj{\cP}$ with $\lambda$ real, we note that
$\alpha_{\lambda,i-1} \alpha_{\lambda,i} \le \frac12(\alpha_{\lambda,i}^2 + \alpha_{\lambda,i-1}^2)$.
For $(\lambda,i) \in \adbj{\cP}$ with $\lambda$ non-real, we have that $(\overline{\lambda}, i)$ is also in $\adbj{\cP}$, and
$$
\alpha_{\lambda,i-1} \overline{\alpha_{\lambda,i}} + \alpha_{\overline{\lambda},i-1} \overline{\alpha_{\overline{\lambda},i}}
= \alpha_{\lambda,i-1} \overline{\alpha_{\lambda,i}} + \overline{\alpha_{\lambda,i-1}} \alpha_{\lambda,i}
= 2 \Re ( \alpha_{\lambda,i-1} \overline{\alpha_{\lambda,i}} ) \le 2 |\alpha_{\lambda,i-1}| \, |\alpha_{\lambda,i}|
$$
$$
\le |\alpha_{\lambda,i-1}|^2 + |\alpha_{\lambda,i}|^2
= \frac12 \left( |\alpha_{\lambda,i-1}|^2 + |\alpha_{\lambda,i}|^2 + |\alpha_{\overline{\lambda},i-1}|^2 + |\alpha_{\overline{\lambda},i}|^2 \right).
$$
Hence we have
\begin{equation} \label{eq:eq2}
\sum_{(\lambda,i) \in J} \alpha_{\lambda,i-1} \overline{\alpha_{\lambda,i}}
\le \frac12 \sum_{(\lambda,i) \in J} \left( |\alpha_{\lambda,i-1}|^2 + |\alpha_{\lambda,i}|^2 \right)
\le \sum_\lambda \sum_{i=1}^{m_\lambda} |\alpha_{\lambda,i}|^2.
\end{equation}
We can also combine terms corresponding to complex conjugates in the first sum in (\ref{eq:eq1}), noting that
\begin{equation} \label{eq:eq3}
\overline{\lambda} |\alpha_{\lambda,i}|^2 + \lambda |\alpha_{\overline{\lambda},i}|^2 =
2 \Re(\lambda) | \alpha_{\lambda,i}|^2 =
\Re(\lambda)|\alpha_{\lambda,i}|^2 + \Re(\overline{\lambda}) |\alpha_{\overline{\lambda},i}|^2,
\end{equation}
and therefore, substituting using (\ref{eq:eq2}) and (\ref{eq:eq3}) in (\ref{eq:eq1}), we have
$$
\langle {\bf x}, A{\bf x} \rangle_M \le \sum_{\lambda} \sum_{i=1}^{m_\lambda} (\Re(\lambda) + \mu) |\alpha_{\lambda,i}|^2
\le - \rho \sum_{\lambda} \sum_{i=1}^{m_\lambda} |\alpha_{\lambda,i}|^2 = - \rho \| {\bf x}\|_M^2,
$$
as required.
\end{proof}

\adbk{In what follows, we largely work with the $M$-norm and its associated inner product $\langle \cdot \rangle_M$.}
\adbk{For comparing the usual Euclidean norm~$|\cdot|$ to~$\|\cdot\|_M$, we note that}
\eq\label{ADB-CM-def}
    c_0(M)|\bx| \Le \|\bx\|_M \Le c_1(M)|\bx|, \qquad \bx \in \re^d,
\en
where $c^2_0(M)$ and $c_1^2(M)$ are the smallest and largest eigenvalues of~$M$.
We recall from~\Ref{ADB-BM-def} that $B_M(\bz,\e) := \{\bw \in \re^d\colon \|\bw - \bz\|_M \le \e\}$, and
we write
\eq\label{ADB-J*M-def}
   J^*_M \Def \mJJ \|J\|_M.
\en
\adbk{Furthermore, for any compact set $\cK \subset \cB(\bc)$, we define 
\eq\label{L-R-eps-def}
      R^*(\cK) \Def \sup_{\by' \in \cK} \sJJ r_{\bJ}(\by')\quad\mbox{and}\quad L(\cK) \Def \sup_{\by' \in \cK} \adbk{\|DF(\by')\|_M},
\en
where~$F$ is as in~\Ref{eq:diff-eq}, and where $DF(\by)_{ij} := \partial F_i/\partial y_j$; here, for any $d \times d$ matrix~$A$,
\[
   \|A\|_M \Def \sup_{\bz\colon \|\bz\|_M \le 1} \|A\bz\|_M
\]
denotes the $M$-operator norm.}

\medskip

\begin{proofof}{Theorem~\ref{thm:main1} (i) and~(ii)}
We now turn to the proof of Theorem~\ref{thm:main1}, parts~(i) and~(ii),
\adbb{showing that $\|\bx - \bc\|_M$ is a Lyapunov function for the ODE system~\Ref{eq:diff-eq}.}
We note that (ii) implies~(i), since ${\bf y}(t) = \bc$ for all~$t$ is a solution to the differential equation.
\adbb{Choose~$\r' > \r$ in such a way that all eigenvalues of $A$ still have real part strictly
less than $-\rho'$.}

Let ${\bf y}$ and ${\bf z}$ be two solutions of the differential equation
$\dot {\bf y} = \sum_{{\bf J} \in \cJ} {\bf J} r_{\bf J}({\bf y})$,
and set ${\bf w}(t) = {\bf y}(t) - {\bf z}(t)$.  As $M$ is symmetric, we have 
\begin{eqnarray} \label{eq:fifty}
\frac{d}{dt} \| {\bf w}(t) \|_M^2 &=& \frac{d}{dt} \left( {\bf w}(t)^T M {\bf w}(t) \right) = 2 {\bf w}(t)^T M \frac{d}{dt} {\bf w}(t) \nonumber \\
&=& 2 \left\langle {\bf w}(t) , \sum_{{\bf J} \in \cJ} {\bf J} ( r_{\bf J}({\bf y}(t)) - r_{\bf J}({\bf z}(t))) \right\rangle_M.
\end{eqnarray}
Our plan is now to approximate $r_{\bf J}({\bf y}(t)) - r_{\bf J}({\bf z}(t))$ by $\nabla r_{\bf J}(\bc) ({\bf y}(t) - {\bf z}(t))$,
so that (\ref{eq:fifty}) is approximately equal to $2 \langle {\bf w}(t), A {\bf w}(t) \rangle_M$, which is
at most $-2\rho' \| {\bf w}(t) \|_M$
by our choice of~$\r'$.  Our approximation will be accurate enough, provided we work within a sufficiently small
neighbourhood of~$\bc$, as we now discuss.

\adbb{Choose $\delta_* > 0$ so that $B_M(\bc, \delta_*) \subseteq \widehat S$; this is possible, since~$\bc$ is in the interior
of~$\widehat S$, by Assumption~3.}
For any $\eps > 0$, by continuity of the functions $\nabla r_{\bf J}$, there is some $\delta = \delta(\eps) > 0$ with 
$\delta \le \delta_*$ such that,
for all ${\bf y} \in B_M(\bc,\delta)$, $| \nabla r_{\bf J}({\bf y})^T - \nabla r_{\bf J}(\bc)^T | < \eps/ c_0(M)$ for each
${\bf J} \in \cJ$.  Then, for all ${\bf y}, {\bf z} \in B_M(\bc, \delta)$, we can apply the
Mean Value Theorem to the line segment between $\bf y$ and $\bf z$ to obtain that, for each ${\bf J} \in \cJ$,
\begin{equation} \label{eq:error}
\big| r_{\bf J}({\bf y}) - r_{\bf J}({\bf z}) - \nabla r_{\bf J}(\bc) ({\bf y} - {\bf z}) \big|
\le (\eps/c_0(M)) | {\bf y} - {\bf z}| \le \eps \| {\bf y} - {\bf z} \|_M.
\end{equation}

Now, as long as ${\bf y}(t)$ and ${\bf z}(t)$ both remain in $B_M(\bc,\delta(\eps))$,
\begin{eqnarray}
   \frac{d}{dt} \| {\bf w}(t) \|_M^2
   &=& 2 \left\langle {\bf w}(t) , \sum_{{\bf J} \in \cJ} {\bf J} ( r_{\bf J}({\bf y}(t)) - r_{\bf J}({\bf z}(t)))
             \right\rangle_M \non \\
   &\le& 2 \left\langle {\bf w}(t), \sum_{{\bf J} \in \cJ} {\bf J} \nabla r_{\bf J}(\bc) ({\bf y}(t) - {\bf z}(t))
       \right\rangle_M \non\\
   &&\quad\mbox{} + 2 \| {\bf w}(t) \|_M \sum_{{\bf J} \in \cJ} \| {\bf J}\|_M
     \big| r_{\bf J}({\bf y}(t)) - r_{\bf J}({\bf z}(t)) - \nabla r_{\bf J}(\bc) ({\bf y}(t) - {\bf z}(t)) \big| \non \\
   &\le& 2 \langle {\bf w}(t), A {\bf w}(t) \rangle_M
           + 2 \eps \| {\bf w}(t) \|_M^2 \sum_{{\bf J} \in \cJ} \|{\bf J} \|_M \non \\
   &\le& -2 \rho' \| {\bf w}(t) \|_M^2 + 2 \eps \| {\bf w}(t) \|_M^2 \sum_{{\bf J} \in \cJ} \|{\bf J} \|_M,
          \label{ADB-square-deriv}
\end{eqnarray}
\adbb{by our choice of~$\r'$,} and so
\eqa
     \frac{d}{dt} \| {\bf w}(t) \|_M &=& \frac{1}{2 \| {\bf w} (t) \|_M} \frac{d}{dt} \| {\bf w}(t)\|_M^2 \non\\
     &\le& \Big(- \rho' + \eps \sum_{{\bf J} \in \cJ} \|{\bf J}\|_M \Big)\| {\bf w}(t) \|_M
               \Le - \rho \| {\bf w}(t)\|_M,  \label{ADB-mod-deriv}
\ena
provided that~$\eps$, and accordingly~$\delta(\eps)$, are chosen sufficiently small:
\adbb{we choose~$\eps$ to satisfy
\eq\label{ADB-eps-def1}
   \eps \sum_{{\bf J} \in \cJ} \|{\bf J}\|_M  \Eq \tfrac12(\r' - \r).
\en
Note that, by an analogous argument,
\eq\label{ADB-other-bound}
   \frac{d}{dt} \| {\bf w}(t) \|_M \ \ge\ -\r^* \|\bw(t)\|_M,
\en
where $\r^* := \sup_{\by\colon \|\by\|_M=1}\langle \by, -A \by \rangle_M + \eps\sJJ \|J\|_M $, and~$\eps$ is as before.
}

Taking ${\bf y}(0) \in B_M(\bc,\delta(\eps))$, \adbb{with~$\eps$ as in~\Ref{ADB-eps-def1},}
and setting ${\bf z}(t) = \bc$ for all~$t$, we can integrate~\eqref{ADB-mod-deriv}
with respect to~$t$, to deduce that ${\bf y}(t) \in B_M(\bc,\delta(\eps))$ for all $t > 0$.
Thus any two solutions ${\bf y}(t)$ and ${\bf z}(t)$ starting in $B_M(\bc,\delta(\eps))$ both remain in
$B_M(\bc,\delta(\eps))$ for all $t > 0$.  Hence the analysis above tells us that
$$
\frac{d}{dt} \| {\bf y}(t)  - {\bf z}(t) \|_M  \le - \rho \| {\bf y}(t) - {\bf z}(t)\|_M
$$
holds for all $t > 0$.  This establishes parts~(i) and~(ii) of Theorem~\ref{thm:main1},
 \adbb{with $\d_0 = \d(\eps)$ for~$\eps$ as in~\Ref{ADB-eps-def1}.}
\end{proofof}

\medskip

Now we turn to the analysis of the continuous-time jump Markov chain $\bX^N(t)$. 
We shall use, here and later, an estimate for a difference of the form $\| {\bf z} + {\bf J} \|_M - \| {\bf z} \|_M$, which is
essentially best possible in cases where $\| {\bf J} \|_M$ is rather smaller than $\| {\bf z} \|_M$.
We state the result in a general setting.

\begin{lemma} \label{lem:inner}
For any vectors ${\bf a}$ and ${\bf b}$ in an inner product space, we have
$$
\Big| \| {\bf a} + {\bf b} \| - \| {\bf a} \| - \frac{\langle {\bf a}, {\bf b} \rangle}{\|{\bf a} \|} \Big|
\le 2 \frac{\| {\bf b} \|^2}{\| {\bf a} \|}.
$$
\end{lemma}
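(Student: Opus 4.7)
The plan is a direct algebraic calculation, using nothing beyond the polarisation identity and Cauchy--Schwarz. Starting from
$$\|\mathbf{a}+\mathbf{b}\|^2 - \|\mathbf{a}\|^2 \;=\; 2\langle \mathbf{a},\mathbf{b}\rangle + \|\mathbf{b}\|^2,$$
I factor the left-hand side as a difference of squares and divide, obtaining the exact formula
$$\|\mathbf{a}+\mathbf{b}\| - \|\mathbf{a}\| \;=\; \frac{2\langle \mathbf{a},\mathbf{b}\rangle + \|\mathbf{b}\|^2}{\|\mathbf{a}+\mathbf{b}\| + \|\mathbf{a}\|}.$$
The linearisation $\langle \mathbf{a},\mathbf{b}\rangle/\|\mathbf{a}\|$ corresponds to replacing the denominator by $2\|\mathbf{a}\|$ and dropping the $\|\mathbf{b}\|^2$ in the numerator, so I need to estimate the error from both of those modifications.

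Subtracting $\langle \mathbf{a},\mathbf{b}\rangle / \|\mathbf{a}\|$ from both sides and placing on the common denominator $\|\mathbf{a}\|(\|\mathbf{a}+\mathbf{b}\| + \|\mathbf{a}\|)$, the numerator simplifies to
$$\langle \mathbf{a},\mathbf{b}\rangle\bigl(\|\mathbf{a}\| - \|\mathbf{a}+\mathbf{b}\|\bigr) + \|\mathbf{a}\|\|\mathbf{b}\|^2.$$
I bound the first term using Cauchy--Schwarz $|\langle \mathbf{a},\mathbf{b}\rangle| \le \|\mathbf{a}\|\|\mathbf{b}\|$ together with the reverse triangle inequality $\bigl|\|\mathbf{a}\| - \|\mathbf{a}+\mathbf{b}\|\bigr| \le \|\mathbf{b}\|$, giving at most $\|\mathbf{a}\|\|\mathbf{b}\|^2$. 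Adding to the second term gives a total numerator bound of $2\|\mathbf{a}\|\|\mathbf{b}\|^2$, so
$$\Big|\|\mathbf{a}+\mathbf{b}\| - \|\mathbf{a}\| - \tfrac{\langle \mathbf{a},\mathbf{b}\rangle}{\|\mathbf{a}\|}\Big| \;\le\; \frac{2\|\mathbf{b}\|^2}{\|\mathbf{a}+\mathbf{b}\| + \|\mathbf{a}\|} \;\le\; \frac{2\|\mathbf{b}\|^2}{\|\mathbf{a}\|},$$
the last inequality using $\|\mathbf{a}+\mathbf{b}\|+\|\mathbf{a}\| \ge \|\mathbf{a}\|$, which is the claim.

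There is no real obstacle: this is the standard first-order Taylor estimate for the norm functional on an inner product space, and the factor of $2$ on the right gives comfortable slack for the rough use of Cauchy--Schwarz and the reverse triangle inequality. The one point of care is that the estimate implicitly requires $\mathbf{a} \ne 0$, which is forced by the appearance of $\|\mathbf{a}\|$ in the denominator in the statement.
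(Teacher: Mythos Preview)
Your proof is correct and follows essentially the same approach as the paper: both start from the difference-of-squares identity $\|\mathbf{a}+\mathbf{b}\|-\|\mathbf{a}\| = (2\langle\mathbf{a},\mathbf{b}\rangle+\|\mathbf{b}\|^2)/(\|\mathbf{a}+\mathbf{b}\|+\|\mathbf{a}\|)$, then bound the discrepancy using Cauchy--Schwarz and the reverse triangle inequality. The only cosmetic difference is that the paper splits the error into two pieces via the triangle inequality before estimating, whereas you first put everything over the common denominator and simplify the numerator; the ingredients and final bound are identical.
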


\begin{proof}
We write
$$
\| {\bf a} + {\bf b} \| - \| {\bf a} \| =
\frac{\| {\bf a} + {\bf b} \|^2 - \| {\bf a} \|^2}{\| {\bf a} + {\bf b} \| + \| {\bf a} \|}
= \frac{2 \langle {\bf a}, {\bf b} \rangle + \| {\bf b} \|^2}{\| {\bf a} + {\bf b} \| + \| {\bf a} \|},
$$
and therefore
\begin{eqnarray*}
\Big| \| {\bf a} + {\bf b} \| - \| {\bf a} \| - \frac{\langle {\bf a}, {\bf b} \rangle}{\|{\bf a} \|} \Big|
&\le& 2 \big| \langle {\bf a}, {\bf b} \rangle \big| \left| \frac{1}{2\| {\bf a} \|} - \frac{1}{\| {\bf a} + {\bf b} \| + \| {\bf a} \|} \right|
+ \frac{\| {\bf b} \|^2}{\| {\bf a} + {\bf b} \| + \| {\bf a} \|} \\
&\le& \| {\bf a} \| \, \| {\bf b} \| \frac{\big| \, \| {\bf a} + {\bf b} \| - \| {\bf a} \| \, \big|} {\| {\bf a} \| (\| {\bf a} + {\bf b} \| + \| {\bf a} \|)}
+ \frac{\| {\bf b} \|^2}{\| {\bf a} \|} \\
&\le& \| {\bf a} \| \, \| {\bf b} \| \frac{ \| {\bf b} \|} {\| {\bf a} \|^2} + \frac{\| {\bf b} \|^2}{\| {\bf a} \|},
\end{eqnarray*}
which is the required result.
\end{proof}

\begin{proofof}{Theorem~\ref{thm:main1} (iii) and~(iv)}
\adbb{Let ${\bX \in S_N}$ be such that $\bx := N^{-1}\bX \in B_M(\bc,\d(\eps))$ and that
$\|\bx - \bc\|_M > K_1 N^{-1/2}$, 
for~$\eps$ as in~\Ref{ADB-eps-def1} and~$K_1$ sufficiently large.} Then
$$
   \adb{\cQ^N G}(\bx) \Eq \sum_{{\bf J} \in \cJ} \bigl(\| \bx + \adbj{(\bJ/N)} - \bc\|_M
           - \| \bx - \bc \|_M \bigr) N r_{\bf J}(\bx).
$$
We shall see that this sum is approximately equal to
$$
S({\bf x}) \Def \sum_{{\bf J} \in \cJ} \frac{\langle \bx - \bc ,\bJ/N \rangle_M}
{\big\| \bx - \bc \big\|_M}\, N \big(r_{\bf J}(\bc) + \nabla r_{\bf J} (\bc) (\bx - \bc) \big).
$$
As $\sum_{{\bf J} \in \cJ} {\bf J} r_{\bf J}(\bc) = {\bf 0}$ and $\sum_{{\bf J} \in \cJ} {\bf J} \nabla r_{\bf J}(\bc) = A$,
we have
\begin{eqnarray*}
S({\bf x}) &=&
\frac{1}{\big\| \bx - \bc \big\|_M} \left \langle \bx - \bc ,
\sum_{{\bf J} \in \cJ} {\bf J} r_{\bf J}(\bc)
+ \sum_{{\bf J} \in \cJ} {\bf J} \nabla r_{\bf J}(\bc) (\bx - \bc) \right \rangle_M \\
&=& \frac{1}{\big\| \bx - \bc \big\|_M} \left \langle \bx - \bc, A (\bx - \bc) \right \rangle_M\\
&\le& -\rho' \big\| \bx - \bc \big\|_M.
\end{eqnarray*}

Our aim is accordingly to bound the difference $\Delta_{\bf J}({\bf x})$ between corresponding terms of the
sums $\cQ^N G(\bx)$ and $S({\bf x})$:
\eqs
   \Delta_{\bf J}({\bf x}) &=& \bigl( \| \bx + ({\bf J}/N) - \bc \|_M
           - \| \bx - \bc \|_M \bigr) N r_{\bf J}(\bx) \\
  &&\mbox{} - \frac{\langle \bx - \bc ,{\bf J}/N \rangle_M}
       {\bigl\|\bx - \bc \bigr\|_M}\, N
             \bigl(r_{\bf J}(\bc) + \nabla r_{\bf J} (\bc)(\bx - \bc) \bigr),
\ens
so that $\adb{\cQ}^N G(\bx) - S({\bf x}) = \sum_{{\bf J} \in \cJ} \Delta_{\bf J}({\bf x})$.

For each ${\bf J} \in \cJ$, we apply Lemma~\ref{lem:inner} with ${\bf a} = \bx - \bc$
and ${\bf b} = {\bf J}/N$, and obtain
\begin{equation} \label{eq.inner}
\Big| \, \| \bx + ({\bf J}/N) - \bc \|_M - \| \bx - \bc\|_M
- \frac{\langle \bx -\bc, {\bf J}/N \rangle_M}{\| \bx - \bc \|_M} \Big|
\Le 2 \,\frac{\| {\bf J} \|_M^2}{N^2 \| \bx - \bc \|_M}.
\end{equation}

Assume now that $\bx \in B_M(\bc,\delta(\eps))$.
For each ${\bf J} \in \cJ$, we use (\ref{eq.inner}) and (\ref{eq:error}) to see that
\begin{eqnarray*}
  |\Delta_{\bf J}({\bf x})|
&\le& \Big| \big\| \bx + ({\bf J}/N) - \bc \big\|_M - \big\| \bx - \bc \big\|_M -
       \frac{\langle {\bf J}/N, \bx - \bc \rangle_M}{\| \bx - \bc \|_M} \Big| N r_{\bf J}(\bx) \\
&& \quad + \frac{\big| \langle {\bf J}/N, \bx - \bc \rangle_M \big|}{\| \bx - \bc \|_M}
   N \big| r_{\bf J}(\bx) - r_{\bf J}(\bc) - \nabla r_{\bf J} (\bc)(\bx - \bc) \big| \\
&\le& 2 \frac{\| {\bf J} \|_M^2 }{N^2 \| \bx - \bc \|_M} N r_{\bf J}(\bx)
     + \frac{1}{N} \| {\bf J} \|_M N \eps \| \bx - \bc \|_M . 
\end{eqnarray*}

Combining the calculations above, 
\adbk{writing $R := R^*(B_M(\bc, \delta(\eps)))$, with $R^*(\cK)$ as in~\Ref{L-R-eps-def}, and  
with~$J^*_M$ as in~\Ref{ADB-J*M-def},} we now observe that, for $\bx \in B_M(\bc, \delta(\eps))$,
\begin{eqnarray*}
  \adb{\cQ}^N G(\bx) &\le& S({\bf x}) + \sum_{{\bf J} \in \cJ} |\Delta_{\bf J}({\bf x})| \\
     &\le& -\rho' \| \bx - \bc \|_M
            + \frac{2 \adbj{(J^*_M)^2 R}}{N \| \bx - \bc \|_M}
            + \eps \sum_{{\bf J} \in \cJ} \| {\bf J} \|_M \| \bx - \bc \|_M \\
     &=& \| \bx - \bc \|_M \left( - \rho'
            + \frac{2 \adbj{(J^*_M)^2 R} }{N \| \bx - \bc \|_M^2}
            + \eps \sum_{{\bf J} \in \cJ} \| {\bf J} \|_M \right).
\end{eqnarray*}
To obtain part~(iii) of the theorem, we need to ensure that each of the two ``error terms'' in the bracket in the final 
expression is at most
 $\frac12 (\rho'-\rho)$.  For the second term, \adbb{this follows from~\Ref{ADB-eps-def1}. }
For the first term, it suffices to have $\|\bx - \bc \|_M$ greater than $K_1 N^{-1/2}$, for a suitable constant $K_1$.

\medskip

To prove part~(iv) of the theorem, our first task is to define a suitable coupling.  Let $\bU^N$ and $\bV^N$
be two copies of the chain $\bX^N$,
with $\bU^N(t) = {\bU}$ and $\bV^N(t) = \bV$ at some time~$t$; write $\bu := N^{-1}\bU$ and $\bv := N^{-1}\bV$.
For each ${\bf J} \in \cJ$, the two copies have possible transitions
to ${\bf U} + {\bf J}$ and ${\bf V} + {\bf J}$
respectively, at rates $N r_{\bf J}({\bf u})$ and $N r_{\bf J}({\bf v})$ respectively.  \adbj{We use a synchronous coupling,
pairing} such transitions as far as
possible; in other words, 
the two chains make the~$\bJ$ transition together at rate
$N \min \left(r_{\bf J}({\bf u}), r_{\bf J}({\bf v})\right)$.
Also, the chain with the larger rate for this transition makes the~$\bJ$ transition alone at rate
$N |r_{\bf J}({\bf u}) - r_{\bf J}({\bf v})|$, while the other chain does not move.
If the two chains make a transition together, then
the distance between the chains, measured as $H(\bU^N(t),\bV^N(t)) = \| \bU^N(t) - \bV^N(t) \|_M$, does not change.
Thus the distance between the two coupled copies of the chain changes only when one chain jumps and the other does not.

\adbb{Suppose now that 
$\bu,\bv \in B_M(\bc,\delta(\eps))$, where
$\d(\eps)$ is defined as for~\Ref{eq:error}, with~$\eps$ as in~\Ref{ADB-eps-def1}. }
For each ${\bf J} \in \cJ$, we set
$$
c_{\bf J} \Def \frac{| \nabla r_{\bf J}(\bc)| }{c_0(M)} + \eps;
$$
from (\ref{eq:error}), it follows that
\begin{eqnarray*}
|r_{\bf J}({\bf y}) - r_{\bf J}({\bf z}) | &\le& |\nabla r_{\bf J}(\bc) ({\bf y} - {\bf z})| + \eps \| {\bf y} - {\bf z}\|_M \\
&\le& | \nabla r_{\bf J}(\bc)| \, | {\bf y}- {\bf z} | + \eps \| {\bf y} - {\bf z}\|_M \\
&\le& c_{\bf J} \| {\bf y} - {\bf z}\|_M,
\end{eqnarray*}
whenever ${\bf y}$ and ${\bf z}$ are in $B_M(\bc,\delta(\eps))$.

Let $\cJ_1 := \cJ_1(\bu,\bv)$ be the set of ${\bf J} \in \cJ$ such that $r_{\bf J} (\bu) \ge r_{\bf J} (\bv)$,
and let $\cJ_2 := \cJ_2(\bu,\bv)$ be the set of ${\bf J} \in \cJ$ such that $r_{\bf J} (\bu) < r_{\bf J} (\bv)$;
thus~$\cJ$ is the disjoint union of $\cJ_1$ and $\cJ_2$.

The contribution from jump ${\bf J} \in \cJ_1$ to the generator for the distance between the two coupled chains is
given by $N^{-1} (\| {\bf U} - {\bf V} + {\bf J} \|_M - \| {\bf U} - {\bf V} \|_M)$, 
multiplied by the excess in the rate of the transition for the
copy started at $\bf u$, which is $N (r_{\bf J} (\bu) - r_{\bf J} (\bv))$.  Similarly for ${\bf J} \in \cJ_2$ the contribution is
$N^{-1} (\| {\bf U} - {\bf V} - {\bf J} \|_M - \| {\bf U} - {\bf V} \|_M)$ multiplied by 
$N(r_{\bf J} (\bv) - r_{\bf J} (\bu))$.

Thus, recalling that $\cA^N$ is defined to be the generator of the coupling, 
\eqs
\lefteqn{\cA^N H({\bf U}, {\bf V})} \\
&&\Eq\sum_{{\bf J} \in \cJ_1} \big( \| {\bf U} - {\bf V} + {\bf J} \|_M - \| {\bf U} - {\bf V} \|_M \big)
N \big( r_{\bf J} (\bu) - r_{\bf J} (\bv) \big) \\
&&\mbox{}\qquad + \sum_{{\bf J} \in \cJ_2} \big( \| {\bf U} - {\bf V} - {\bf J} \|_M - \| {\bf U} - {\bf V} \|_M \big)
N \big( r_{\bf J} (\bv) - r_{\bf J} (\bu) \big).
\ens
For each ${\bf J} \in \cJ_1$, we see that
\eqs
\lefteqn{\Big| \big( \| {\bf U} - {\bf V} + {\bf J} \|_M
            - \| {\bf U} - {\bf V} \|_M \big) N \big( r_{\bf J} (\bu) - r_{\bf J} (\bv) \big)} \\
 &&\qquad\qquad\qquad\qquad\qquad\mbox{}  - \frac{ \langle {\bf U} - {\bf V}, {\bf J} \rangle_M} {\| {\bf U} - {\bf V} \|_M}
      \nabla r_{\bf J}(\bc)({\bf U} - {\bf V}) \Big|\\
&&\Le \Big| \| {\bf U} - {\bf V} + {\bf J} \|_M - \| {\bf U} - {\bf V} \|_M
       - \frac{ \langle {\bf U} - {\bf V}, {\bf J} \rangle_M} {\| {\bf U} - {\bf V} \|_M}
            \Big| N \Big| \big( r_{\bf J} (\bu) - r_{\bf J} (\bv) \big) \Big| \\
&&\mbox{}\qquad + \frac{ | \langle {\bf U} - {\bf V}, {\bf J} \rangle_M |} {\| {\bf U} - {\bf V} \|_M}
     \Big| N \big( r_{\bf J} (\bu) - r_{\bf J} (\bv) \big)
        - \nabla r_{\bf J}(\bc)({\bf U} - {\bf V}) \Big|,
\ens
which, using Lemma~\ref{lem:inner} and~\Ref{eq:error}, is in  turn at most
$$
\frac{2 \| {\bf J} \|_M^2 c_{\bf J}}{\| {\bf U} - {\bf V} \|_M} \|{\bf U} - {\bf V}\|_M
+ \| {\bf J} \|_M \eps \| {\bf U} - {\bf V} \|_M.
$$
An identical bound holds for ${\bf J} \in \cJ_2$.

Hence
\eqs
\lefteqn{\left| \cA^N H({\bf U}, {\bf V})
  - \sum_{{\bf J} \in \cJ} \frac{ \langle {\bf U} - {\bf V}, {\bf J} \rangle_M} {\| {\bf U} - {\bf V} \|_M} 
           \nabla r_{\bf J}(\bc)({\bf U} - {\bf V}) \right|}\\
&&\Le
2 \sum_{{\bf J} \in \cJ} \| {\bf J} \|_M^2 c_{\bf J}
+ \eps \| {\bf U} - {\bf V} \|_M \sum_{{\bf J} \in \cJ} \| {\bf J} \|_M.
\ens

We also have 
\begin{eqnarray*}
\lefteqn{ \sum_{{\bf J} \in \cJ} \frac{ \langle {\bf U} - {\bf V}, {\bf J} \rangle_M} {\| {\bf U} - {\bf V} \|_M}
\nabla r_{\bf J}(\bc)({\bf U} - {\bf V})} \\
&=&
   \frac{1}{\| {\bf U} - {\bf V} \|_M} \left\langle {\bf U} - {\bf V}, 
               \left(\sum_{{\bf J} \in \cJ} {\bf J} \nabla r_{\bf J}(\bc)\right) ({\bf U} - {\bf V}) \right\rangle_M \\
&=& \frac{\langle {\bf U} - {\bf V}, A({\bf U} - {\bf V}) \rangle_M}{\| {\bf U} - {\bf V} \|_M}\\
&\le& - \rho' \| {\bf U} - {\bf V} \|_M,
\end{eqnarray*}
where the final inequality follows from Lemma~\ref{lem:matrix-M}. Hence
\begin{eqnarray*}
\cA^N H({\bf U}, {\bf V})
&\le& - \rho' \| {\bf U} - {\bf V} \|_M
+ 2 \sum_{{\bf J} \in \cJ} \| {\bf J} \|_M^2 c_{\bf J}
+ \eps \| {\bf U} - {\bf V} \|_M \sum_{{\bf J} \in \cJ} \| {\bf J} \|_M \\
&\le& \| {\bf U} - {\bf V} \|_M \left( -\rho' + \eps \sum_{{\bf J} \in \cJ} \| {\bf J} \|_M
+ \frac{2 \sum_{{\bf J} \in \cJ} \| {\bf J} \|_M^2 c_{\bf J}}{\| {\bf U} - {\bf V}\|_M} \right).
\end{eqnarray*}
As before, \adbb{provided $\eps$ is as in~\Ref{ADB-eps-def1}} and $K_2$ is large enough,
we have
$$
\cA^N H({\bf U}, {\bf V}) \Le - \rho H({\bf U},{\bf V}) \quad\mbox{for all } \bu,\bv \in B_M(\bc,\delta(\eps)) \mbox{ and }
  \| {\bf U} - {\bf V} \|_M \ge K_2
$$
This establishes the final part of the theorem, \adbb{again with $\d_0 = \d(\eps)$ for~$\eps$ as in~\Ref{ADB-eps-def1}.}
\end{proofof}

We end this section by giving a lemma describing the possible behaviours of our set of jumps $\cJ$, in particular,
showing that Assumption~2 is a natural one, and only rules out fairly pathological cases.
The lemma also gives an upper bound (in the case when the set of jumps $\cJ$ is spanning) on the length of a
shortest path between two states made up of jumps in $\cJ$, in terms of the number of jumps required,
and the total distance covered by the jumps on the path.

\begin{lemma} \label{intvec}
Let $\mathcal P$ be any set of integer vectors in $\Z^d$.   
Then one of the following holds:
\begin{itemize}
\item [(i)]
there is some non-zero vector ${\bf v} \in \R^d$ such that ${\bf v} \cdot {\bf p} \ge 0$ for all ${\bf p} \in {\mathcal P}$; 
\item [(ii)]
there is some strict sublattice of $\Z^d$ containing each vector in ${\mathcal P}$; 
\item [(iii)]
there is some finite subset ${\mathcal Q} = \{ {\bf q}_1, \cdots , {\bf q}_k \}$ of ${\mathcal P}$ that is spanning.
\end{itemize}

Moreover, in case {\rm (iii)}, for any norm $\|\cdot \|_M$ on $\R^d$, there are constants $\mu$ and $\nu$ such that
every vector ${\bf z} \in \Z^d$ can be written as ${\bf q}_1 + \cdots + {\bf q}_n$, where each ${\bf q}_i$ is in $\mathcal Q$, 
$n \le \mu \| {\bf z} \|_M$ and $\sum_{i=1}^n \| {\bf q}_i \|_M \le \nu \| {\bf z}\|_M$.    
\end{lemma}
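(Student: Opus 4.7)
The plan splits the argument into three pieces: (A) showing the alternatives~(i)--(iii) are exhaustive, (B) constructing an explicit finite spanning $\mathcal{Q}$ in case~(iii), and (C) establishing the quantitative length bounds.

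For (A), I introduce the convex cone $C := \{\sum_i \lambda_i {\bf p}_i \colon \lambda_i \ge 0,\ {\bf p}_i \in \mathcal{P}\} \subseteq \R^d$ and the sublattice $L := \langle \mathcal{P}\rangle_\Z \subseteq \Z^d$ generated by $\mathcal{P}$. If $L \ne \Z^d$ then case~(ii) holds, so we may assume $L = \Z^d$. The key dichotomy I prove for $C$ is that either $C = \R^d$ or $C$ lies in a closed half-space through the origin. Indeed, if $\overline C = \R^d$ then $\overline C$ has interior $\R^d$, and since a convex set shares its interior with its closure whenever the former is non-empty, $\mathrm{int}(C) = \R^d$, forcing $C = \R^d$. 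Otherwise $\overline C$ is a proper closed convex cone, so by polar duality $(\overline C)^{\ast\ast} = \overline C$ its dual $(\overline C)^{\ast}$ contains some non-zero ${\bf v}$, which then satisfies ${\bf v} \cdot {\bf p} \ge 0$ for every ${\bf p} \in \mathcal{P}$---this is case~(i).

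For (B), in the remaining case $C = \R^d$ and $L = \Z^d$, I build $\mathcal{Q}$ as follows. Choose a finite $\mathcal{P}_1 \subseteq \mathcal{P}$ generating $\Z^d$ as a group (possible since $\Z^d$ is finitely generated). For each ${\bf q} \in \mathcal{P}_1$, the vector $-{\bf q}$ lies in $C = \R^d$, hence is a non-negative real---so by rationality of non-empty polyhedra with integer data a non-negative rational---combination of finitely many elements of $\mathcal{P}$. Clearing denominators yields a positive integer $N_{\bf q}$ and a relation $-N_{\bf q}\,{\bf q} = \sum_j a_{{\bf q},j}\,{\bf p}'_{{\bf q},j}$ with $a_{{\bf q},j} \in \Z_{\ge 0}$ and ${\bf p}'_{{\bf q},j} \in \mathcal{P}$. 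Set $\mathcal{Q}$ to be $\mathcal{P}_1$ together with all the ${\bf p}'_{{\bf q},j}$ that appear; this is a finite subset of $\mathcal{P}$. The rearranged identity $-{\bf q} = (N_{\bf q}-1)\,{\bf q} + \sum_j a_{{\bf q},j}\,{\bf p}'_{{\bf q},j}$ realises $-{\bf q}$ as a sum of $\mathcal{Q}$-elements for each ${\bf q} \in \mathcal{P}_1$. Any ${\bf z} \in \Z^d$ may then be written as $\sum_{{\bf q}\in\mathcal{P}_1} c_{\bf q}\,{\bf q}$ with $c_{\bf q} \in \Z$, and substituting this identity for each negative term presents ${\bf z}$ as a sum of $\mathcal{Q}$-elements, establishing case~(iii).

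For (C), I use a rounding argument. Pick $d$ linearly independent ${\bf q}_{i_1},\ldots,{\bf q}_{i_d} \in \mathcal{P}_1$ (which exist since $\mathcal{P}_1$ spans $\R^d$) and write ${\bf z} = \sum_j \gamma_j\,{\bf q}_{i_j}$ with $\gamma_j \in \R$; since $(\gamma_j)$ depends linearly on ${\bf z}$, there is a constant $K$ with $|\gamma_j| \le K\|{\bf z}\|_M$ for all $j$. Round each $\gamma_j$ to $\tilde\gamma_j \in \Z$; the residual ${\bf z} - \sum_j \tilde\gamma_j\,{\bf q}_{i_j}$ is an integer vector in a fixed bounded set, for each element of which I fix a sum-representation in $\mathcal{Q}$ of length at most some constant $L$. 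Realising each $\tilde\gamma_j{\bf q}_{i_j}$ with $\tilde\gamma_j\ge0$ as $\tilde\gamma_j$ copies of ${\bf q}_{i_j}$ and each with $\tilde\gamma_j<0$ via the $-{\bf q}_{i_j}$-identity above, the total length is at most $\sum_j|\tilde\gamma_j|L_{\max} + L = O(\|{\bf z}\|_M)$, where $L_{\max}$ bounds the length of any individual $\pm{\bf q}$-identity. Since $\|{\bf z}\|_M$ is bounded below on $\Z^d\setminus\{{\bf 0}\}$, the additive constant can be absorbed into~$\mu$; the bound on $\sum_{i=1}^n\|{\bf q}_i\|_M$ then follows by multiplying the term count by $\max_{{\bf q}\in\mathcal{Q}}\|{\bf q}\|_M$, giving~$\nu$.

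The main obstacle is the convex-geometric step in~(A) separating ``$\overline C = \R^d$'' from ``$C = \R^d$'', which rests on the nontrivial convex-analytic fact that a convex set and its closure share their interior whenever one of them is non-empty. A secondary technical input is the rationality step in~(B)---that a non-empty polyhedron defined by integer constraints contains rational points, which is what permits clearing denominators to obtain the integer identity $-N_{\bf q}\,{\bf q} = \sum_j a_{{\bf q},j}\,{\bf p}'_{{\bf q},j}$. Once these two are in hand, (C) reduces to routine bookkeeping, and (A)--(B) fit together to give the trichotomy.
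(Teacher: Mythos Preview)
Your proposal is correct but proceeds quite differently from the paper's own argument.

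The paper centres everything on the \emph{integer} semigroup $\mathcal{V}$ of non-negative integer combinations of $\mathcal{P}$, rather than separating the real cone~$C$ from the lattice~$L$ as you do. Its trichotomy runs: if $\mathcal{V}$ is closed under negation it is a sublattice of $\Z^d$, giving either~(ii) or $\mathcal{V}=\Z^d$; if not, there is a ${\bf z}\in\mathcal{V}$ with $-{\bf z}\notin\mathcal{V}$, and Carath\'eodory plus a rationality argument shows $-{\bf z}\notin\mathrm{conv}(\mathcal{V})$, whence a separating hyperplane yields~(i). Thus the rationality trick you invoke in~(B) appears in the paper instead inside the derivation of~(i), and the paper never needs the convex-analytic fact that a convex set shares its interior with its closure (nor cone duality). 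In the case $\mathcal{V}=\Z^d$ the paper's construction of~$\mathcal{Q}$ is much lighter than yours: it simply writes each of the $2d$ unit vectors $\pm{\bf e}_i$ as a sum of vectors in~$\mathcal{P}$ (possible since $\pm{\bf e}_i\in\mathcal{V}$) and takes $\mathcal{Q}$ to be the finite collection of summands used. The quantitative bound then falls out in one line: any ${\bf z}\in\Z^d$ is a sum of $\|{\bf z}\|_1$ vectors $\pm{\bf e}_i$, hence of at most $\ell_0\|{\bf z}\|_1\le \ell_0\gamma\|{\bf z}\|_M$ vectors in~$\mathcal{Q}$, avoiding your basis-rounding and residual-bookkeeping entirely. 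Your decomposition into $C$ and $L$ is conceptually clean for part~(A), but it forces you to work harder in~(B) and~(C) to recover integer information from the real cone, whereas the paper's semigroup viewpoint keeps everything integral throughout and makes the final step essentially trivial.
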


\begin{proof}
Let ${\mathcal V}$ be the set of all non-negative integer combinations of vectors in $\mathcal P$.  If ${\mathcal V}$ is closed under negation,
then it forms a sublattice of $\Z^d$, so either we are in case~(ii), or ${\mathcal V} = \Z^d$ (and so $\mathcal P$ is spanning). 

Suppose instead that there is some vector ${\bf z} \in {\mathcal V}$ such that $-{\bf z}$ is not in ${\mathcal V}$.  
We will show that condition (i) holds.  

Consider the convex hull ${\rm conv}({\mathcal V})$ of $\mathcal V$: note that this is a cone (closed under multiplication by positive scalars).  
If $-{\bf z} \in {\rm conv}({\mathcal V})$, then, by Carath\' eodory's Theorem, $-{\bf z}$ is a convex combination of (at most) $d+1$ 
vectors ${\bf v}_1, \cdots, {\bf v}_{d+1}$ in ${\mathcal V}$.  
  
There is thus a solution to the linear system 
$$
\lambda_1 {\bf v}_1 + \cdots + \lambda_{d+1} {\bf v}_{d+1} + {\bf z} = {\bf 0};  \quad \lambda_1 + \cdots + \lambda_{d+1} = 1
$$
with all the $\lambda_i$ non-negative.  As the ${\bf v}_i$ and ${\bf z}$ are integer vectors, this amounts to a system of $d+1$ linear
equations with integer coefficients in $d+1$ unknowns.  Thus there is a solution with the 
$\lambda_i$ all rational.  Clearing denominators, this entails a solution to 
$$
\mu_1 {\bf v}_1 + \cdots + \mu_{d+1} {\bf v}_{d+1} + \mu_0 {\bf z} = {\bf 0}
$$
with all the $\mu_i$ non-negative integers, and $\mu_0 \not= 0$.  Thus we can write
$$
-{\bf z} = \mu_1 {\bf v}_1 + \cdots + \mu_{d+1} {\bf v}_{d+1} + (\mu_0-1) {\bf z} \, \in {\mathcal V},
$$
which is a contradiction. 

We conclude that $-{\bf z} \notin {\rm conv}({\mathcal V})$, and hence there is a hyperplane in $\R^d$ separating $-{\bf z}$ from ${\rm conv}({\mathcal V})$.  
Take ${\bf v}$ to be normal to this hyperplane, with ${\bf v} \cdot (-{\bf z}) < 0$.  If any vector $\bf p$ in $\mathcal P$ has 
${\bf v} \cdot {\bf p} < 0$, then some multiple of $\bf p$ lies on the same side of the hyperplane as $- {\bf v}$, a contradiction.   
Therefore, if ${\mathcal V}$ is not closed under negation, condition (i) holds. 

We are left with the case where ${\mathcal V} = \Z^d$.  In particular, each of the $2d$ unit vectors $\pm {\bf e}_i$ can be written as a sum
of vectors in $\mathcal P$.  For each of these unit vectors $\bf f$, we let $\ell = \ell({\bf f})$ be the smallest number so that 
there is a multiset ${\bf p}_1, \cdots, {\bf p}_\ell$ of vectors in $\mathcal P$ with ${\bf p_1} + \cdots + {\bf p}_\ell = {\bf f}$.  
Let $\ell_0$ be the maximum of the $\ell({\bf f})$, and take ${\mathcal Q}$ to be a set of at most $2d\ell_0$ vectors from ${\mathcal P}$ 
so that each of the $\pm {\bf e}_i$ is a sum of at most $\ell_0$ vectors in ${\mathcal Q}$.  
Now, for any vector ${\bf z}$ in $\Z^d$, we can write ${\bf z}$ as a sum of $\| {\bf z} \|_1$ vectors $\pm {\bf e}_i$, and thus 
as a sum of at most $\ell_0 \| {\bf z} \|_1$ vectors in ${\mathcal Q}$.  Thus $\mathcal Q$ is spanning and condition (iii) holds.  

Furthermore, in this last case, as all norms in $\R^d$ are equivalent, there is a constant $\gamma$ 
such that $\| {\bf z} \|_1 \le \gamma \| {\bf z} \|_M$ for all ${\bf z} \in \Z^d$, so the number of vectors in the sum making up 
any vector ${\bf z}$ is at most $\ell_0 \gamma \| {\bf z} \|_M := \mu \| {\bf z} \|_M$, and the sum of the $M$-norms of the summands 
is at most $\max \big\{ \| {\bf q} \|_M : {\bf q} \in {\mathcal Q} \big\} \mu \| {\bf z} \|_M := \nu \| {\bf z} \|_M$, 
as claimed.   
\end{proof}

\begin{remark} \label{ADB-spanning-rk}{\rm
\adbb{ Lemma~\ref{intvec} gives an upper bound on the length of any of the paths specified in~\Ref{ADB-irreducible},
in terms of the number of jumps required, and of the total distance covered by the jumps on the path.}}
\end{remark}

\section{Concentration}\label{concentration}
\setcounter{equation}{0}
Under Assumptions 1--3,
we now investigate how fast the process~$\bX^N$ of Section~\ref{Density-dependent} converges to
a quasi-equilibrium (in the sense of \BP\ (2012, \adbm{Theorem~4.1})) near~$N\bc$, if \grbu{$\bx^N(0) = N^{-1} \bX^N(0) \in \cB(\bc)$.}

\subsection{Concentration of measure}
\label{sub:conc} 

\subsubsection{Concentration over short time intervals}

\adbk{This section is concerned with the evolution} of the process $\bx^N \grbu{=} N^{-1}\bX^N$,
 away from the
fixed point~$\bc$.  The results are fairly standard, having their origins in Kurtz~(1970,1971), and having been further 
developed in Darling \& Norris (2008) --- see also the references therein.
We reproduce them here because they are integral to what follows, in a form most useful for our purposes.
The first lemma establishes concentration for a martingale associated with~$\bx^N$, and the second translates this
into concentration over finite intervals of the distribution of~$\bx^N$ around the solution~$\by_{[\bx^N(0)]}$ of the
ODE system~\Ref{eq:diff-eq} with the same starting point.

\begin{lemma}\label{ADB-exp-bound-MG}
Let $\bX^N := (\bX^N(t),\, t \ge 0)$ be a Markov population process on $S_N \subset \Z^d$ with transition rates as given 
in~\Ref{transition-rates}, satisfying \adbm{\hbox{Assumptions~1--3;}} write $\bx^N := N^{-1}\bX^N$.  Let 
$\t_{\cK} := \inf\{t \ge 0\colon\,\bx^N \notin \cK\}$ for 
some compact set~$\cK$.
Define
\eq\label{ADB-MN-def}
    \adbk{\tm^N(t) \Def \bx^N(t) - \bx^N(0) - \int_0^t F(\bx^N(u))\,du.} 
\en
Then~$\tm^N(\cdot \wedge \t_{\cK})$ is a $d$-dimensional zero mean martingale, and, 
for any $z,\adbm{T} > 0$, 
\eqs
     \lefteqn{ \pr\Bigl[\sup_{0 \le t \le T}\|\tm^N(t\wedge \t_{\cK})\adbk{\|_M} \ge z\Bigr] }\\ 
                 &&\Le \z_{N,T,\cK}(z) \Def
                      2d \exp\Bigl\{ -\frac {Nz}{2d\adbk{J^*_M}}\min\Bigl(1,\frac z{\adbj{ed \adbk{J^*_M} T R^*(\cK)}} \Bigr) \Bigr\}.
\ens
\end{lemma}

\begin{proof}
For $\bth \in \re^d$, define
 \[
    Z_{N,\bth}(t) \Def e^{\adbk{\lan \bth, \tm^N(t)\ran_M}} \exp\Bigl\{- \int_0^t \f_{N,\bth}(\bx^N(u-))\,ds\Bigr\},
 \]
where
\[
    \f_{N,\bth}(\by) \Def N \sJJ r_\bJ(\by)\Bigl(e^{N^{-1}\adbk{\lan \bth, \bJ \ran_M} } - 1 - N^{-1}\adbk{\lan \bth, \bJ \ran_M} \Bigr).
\]
Then~$Z_{N,\bth}(\cdot \wedge \t_{\cK})$ is a non-negative finite variation martingale, with $Z_{N,\bth}(0) = 1$, and, for $\by \in \cK$, 
\eqs
    |\f_{N,\bth}(\by)| &\le& \frac N2 \sJJ r_\bJ(\by) e^{N^{-1}|\adbk{\lan \bth, \bJ \ran_M}|} N^{-2}|\adbk{\lan \bth, \bJ \ran_M}|^2 \\
    &\le& N \Bigl\{\sup_{\by' \in \cK}\sJJ r_\bJ(\by')\Bigr\} g(N^{-1}\mJJ |\adbk{\lan \bth, \bJ \ran_M}|) 
           \Le N R^*(\cK) g(N^{-1}\adbk{J^*_M\,\|\bth\|_M}),
\ens
where $g(x) := \half x^2 e^x \le (e/2)x^2$ if \adbk{$0 \le x \le 1$}.

Observe that
\[
    \inf\{t \ge 0\colon\, \|\tm^N(t\wedge \t_{\cK})\adbk{\|_M} > z\}
               \ \ge\ \min_{1\le i\le d} \inf\{t \ge 0\colon\, |\adbk{\lan \tm^N(t\wedge \t_{\cK}),\tbe\uii \ran_M}| > z/d\},
\]
\adbk{where $\tbe\uii$, $0 \le i \le d$, are eigenvectors of~$M$ such that $\lan \tbe\uii, \tbe\uj \ran_M = \d_{ij}$,
$1 \le i,j \le d$.}
Taking $\bth = \th\be\uii$ for \adbk{$\th > 0$} and $i \in [d]$, and considering the stopping time 
$\min\bigl\{T,\inf\{t \ge 0\colon\,\adbk{\lan \tm^N(t\wedge \t_{\cK}),\tbe\uii \ran_M} > z/d\}\bigr\}$, it follows that, 
if $\adbk{\th J^*_M} \le N$, then
\eqs
   \lefteqn{1 \ \ge\ \exp\{\th z/d - N^{-1}\adbk{(e/2)\th^2 (J^*_M)^2 T R^*(\cK)} \}}\\
   &&\hskip1.5in \times\, \pr[\inf\{t \ge 0\colon\,\adbk{\lan \tm^N(t\wedge \t_{\cK}),\tbe\uii \ran_M} > z/d\} \le T].
\ens
Taking $\th := \adbk{(N/J^*_M)\min\{1,(z/d)/(e J^*_M T R^*(\cK))\}}$, 
for which choice
\[
    N^{-1}\adbk{(e/2)\th^2(J^*_M)^2} T R^*(\cK)  \Le \th z/(2d),
\]
it follows that
\eqs
   \lefteqn{\pr[\inf\{t \ge 0\colon\,\adbk{\lan \tm^N(t\wedge \t_{\cK}),\tbe\uii \ran_M} > z/d\} \le T] }\\
      && \Le \exp\Bigl\{ -\frac {Nz}{2d\adbk{J^*_M}}\min\Bigl(1,\frac z{\adbk{ed J^*_M T R^*(\cK)}} \Bigr) \Bigr\},
\ens
yielding the lemma.
\end{proof}

Let
\eq\label{ADB-apr-1}
     \cY_\e(\bx,T) \Def \{\by' \in \re^d\colon\, \inf_{0\le t\le T}\|\by' - \by_{[\bx]}(t)\adbk{\|_M} \le \e\}
\en
denote the set of all points
within \adbk{$M$-distance}~$\e$ of the set $(\by_{[\bx]}(t),\,0 \le t\le T) \subset \re^d$.

\begin{lemma}\label{ADB-exp-bound-XN}
Let $\bX^N := (\bX^N(t),\, t \ge 0)$ be a Markov population process on $S_N \subset \Z^d$ 
satisfying Assumptions~1--3, and let $\bx^N(t) := N^{-1}\bX^N(t)$.
\adbu{Let~$\cK$ be a compact subset of~$\widehat S$}. 
Define 
\[A(T,\e) \Def \bigl\{\inf\{t > 0\colon\, \|\bx^N(t)- \by_{[\bx^N(0)]}(t)\adbk{\|_M} > \e\} \le \adbm{T} \bigr\}.\]
Then, if $\cY_\e(\bx^N(0),T) \subset \cK$, 
it follows that
\[
   \pr[A(T,\e)] \Le \z_{N,T,\cK}(\e e^{-TL(\cK)}),
\]
where $\z_{N,t,\cK}$ is as defined in Lemma~\ref{ADB-exp-bound-MG}.
\end{lemma}

\begin{proof}
 From~\Ref{ADB-MN-def},
 \[
      \bx^N(t) \Eq \bx^N(0) + \int_0^t F(\bx^N(u))\,du + \tm^N(t),\qquad 0 \le t \le T,
 \]
and the solution~$\by := \by_{[\bx^N(0)]}$  of the ODE system~\Ref{eq:diff-eq} satisfies
\[
    \by(t) \Eq \bx^N(0) + \int_0^t F(\by(u))\,du, \qquad 0 \le t \le T.
\]
Taking the difference of these two equations, a standard Gronwall argument gives
\[
    \sup_{0 \le t \le T}\|\bx^N(t) - \by_{[\bx^N(0)]}(t)\adbk{\|_M} \Le e^{L(\cK)T} \sup_{0 \le t \le T}\|\tm^N(t)\adbk{\|_M},
\]
\grba{and therefore
\eqs
\pr[A(T,\e)] &\le& \pr \biggl( \sup_{0 \le t \le T}\|\bx^N(t) - \by_{[\bx^N(0)]}(t)\adbk{\|_M} \ge \eps \biggr) \\
    &\le& \pr \biggl( \sup_{0 \le t \le T}\|\tm^N(t)\adbk{\|_M} \ge \eps e^{-T L(\cK)} \biggr).
\ens}
The lemma now follows from Lemma~\ref{ADB-exp-bound-MG}, since the event 
$A(T,\e)$ is the same, whether~\adbk{$\bx^N$} is stopped outside $\cY_\e(\bx^N(0),T)$ or not.
\end{proof}

Typically, when applying the lemma, we can take $\cK = \cY_{\e_0}(\bx^N(0),T)$, for a suitable chosen~$\e_0$, or even
$\cK = B_M(\bc,\d_0)$.

The conclusion of Lemma~\ref{ADB-exp-bound-XN} is that the random process~$\bx^N$ is concentrated around the
deterministic path over any bounded time interval.  The factor $e^{-TL(\cK)}$ in the bound means that the concentration
can weaken exponentially fast as the length of the interval increases, as is appropriate if the solutions of the deterministic
equations from close initial points diverge from one another over time.  However, in the neighbourhood of an attracting
equilibrium of the deterministic equations, this does not happen, and more can be said.  This is the substance of the
next section.

\subsubsection{Concentration near $\bc$}

We now use Theorem~\ref{thm:main1} above to show that the distribution of~$\bX^N$ stays
concentrated around \adbk{the mean of the truncated version~$\bX\undi$} for a long time, 
\adbk{provided that~$\bX^N(0)$ is close enough to~$\bc$}.  
For this purpose, we first recall~\BBL~(2017, Theorem 3.1), 
which provides a concentration inequality for contracting continuous time jump Markov chains.

\begin{theorem}
\label{Wconcentration}
Let $Q:=(Q(x,y): x,y\in S)$ be the $Q$-matrix of a stable, conservative, non-explosive continuous-time Markov chain
$X:=(X(t),\,t\ge0)$ with discrete state space $S$.
Writing $q_x = - Q(x,x)$, let $\widehat{S}$ be a subset of $S$, 
for which $q = \sup_{x \in \widehat{S}} \{q_x\} < \infty$. Let $f\colon S \to \R$ be a function such that 
$(P^t f)(x) = \E_x f (X(t))$ exists for all $t \ge 0$ and $x \in S$, and suppose that $\beta$ is a constant such that
\begin{eqnarray}
\label{cond-1}
|(P^s f)(x) - (P^s f) (y) | \le \beta
\end{eqnarray}
for all $s \ge 0$, all $x \in \widehat{S}$ and all $y \in N(x)$, where $N(x) = \{y \in S: Q(x,y) > 0\}$.
Assume also that the continuous function $\alpha: \R^+ \to \R^+$ satisfies
\begin{eqnarray}
\label{cond-2}
\sum_{y \in S} Q(x,y) \Big ( (P^s f)(x) - (P^s f)(y) \Big )^2 \le \alpha (s),
\end{eqnarray}
for all $x \in \widehat{S}$ and $s \ge 0$.

Define $a_t = \int_{s=0}^t \alpha (s) ds$. Finally, let $A_t := \{X(s) \in \widehat{S} \mbox{ for all } 0 \le s < t\}$. Then, for all $x_0 \in \widehat{S}$, $t \ge 0$ and $m \ge 0$,
\begin{eqnarray}
\label{conc-ineq}
\P_{x_0} \Big ( \Big \{ |f(X(t)) - (P^t f)(x_0)| > m \Big \} \cap A_t \Big ) \le  2 e^{-m^2/(2a_t + 2 \beta m/3)}.
\end{eqnarray}
\end{theorem}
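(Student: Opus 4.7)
The plan is to realise the difference $f(X(t)) - (P^t f)(x_0)$ as the final value of a Doob martingale, and then apply a Bernstein/Freedman-type exponential inequality with jump size bound $\beta$ and predictable quadratic variation bound $a_t$. Concretely, I set
\[
   M(s) \Def (P^{t-s} f)(X(s)), \qquad 0 \le s \le t,
\]
so that $M(0) = (P^t f)(x_0)$ and $M(t) = f(X(t))$. By the Markov property together with Kolmogorov's backward equation (applied to the finite-rate chain $X$), the process $M$ is a (local) martingale: informally, $\partial_s (P^{t-s}f) + Q (P^{t-s}f) = 0$, so that the drift part in the Dynkin decomposition of $M$ vanishes. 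To keep things globally integrable, I would immediately work with the martingale $M^\tau$ stopped at $\tau \Def \inf\{s \ge 0 : X(s) \notin \widehat S\}$, noting that $\{\tau > t\} = A_t$ up to null sets.

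Next I would control the two ingredients required for the exponential inequality. For the jumps: at a jump time $s$ of $X$ with $X(s-) \in \widehat S$ and $X(s) = y \in N(X(s-))$, hypothesis~(\ref{cond-1}) gives
\[
   |\Delta M(s)| \Eq |(P^{t-s}f)(X(s)) - (P^{t-s}f)(X(s-))| \Le \beta.
\]
For the predictable quadratic variation: the predictable density of $\langle M \rangle$ at time $s$ is
\[
   \sum_{y \in S} Q(X(s-),y)\bigl((P^{t-s}f)(y) - (P^{t-s}f)(X(s-))\bigr)^2,
\]
which on $\{s \le \tau\}$ is at most $\alpha(t-s)$ by~(\ref{cond-2}), so $\langle M^\tau \rangle_t \le \int_0^t \alpha(t-s)\,ds = a_t$ almost surely.

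The concentration bound then follows from a standard Bernstein-type argument for martingales with jumps bounded by $\beta$: one shows that, for $0 < \lambda < 3/\beta$,
\[
   \exp\bigl\{\lambda (M^\tau(s) - M^\tau(0)) - \tfrac{\lambda^2/2}{1 - \lambda\beta/3}\langle M^\tau\rangle_s \bigr\}
\]
is a supermartingale, via the elementary estimate $e^x - 1 - x \le \tfrac{x^2/2}{1 - |x|/3}$ applied jump-wise, and then optimises over $\lambda$. On $A_t$ we have $M^\tau(t) = M(t)$ and $\langle M^\tau\rangle_t \le a_t$, so the resulting tail bound yields exactly~(\ref{conc-ineq}) after taking $\lambda = m/(a_t + \beta m/3)$ in the usual way, and symmetrising to handle $-M$.

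The main obstacle is less the probabilistic content than the regularity bookkeeping: one needs to justify that $s \mapsto (P^{t-s}f)(x)$ is well-defined and differentiable in the sense required for Dynkin's formula, and that the local-martingale $M^\tau$ is genuinely a martingale (rather than merely a local one), so that the exponential supermartingale argument applies at the deterministic time $t$. Since $q = \sup_{x \in \widehat S} q_x < \infty$ and the chain is non-explosive, the jump rates of $X^\tau$ are uniformly bounded and $|M^\tau|$ inherits enough integrability from condition~(\ref{cond-1}) and the boundedness of jumps to make this routine; once these points are in place, the proof reduces to the Bernstein-type supermartingale calculation above.
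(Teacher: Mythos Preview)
Your approach is correct and is, in outline, the standard proof of this concentration inequality. Note, however, that the present paper does not itself prove Theorem~\ref{Wconcentration}: it simply quotes the result from \BBL~(2022, Theorem~3.1). The argument in that reference is precisely the one you sketch --- the Doob martingale $M(s) = (P^{t-s}f)(X(s))$, stopped on first exit from~$\widehat S$, combined with the Bernstein/Freedman exponential supermartingale bound, using~\eqref{cond-1} to control the jump sizes and~\eqref{cond-2} to bound the predictable quadratic variation by~$a_t$. Your identification of the technical points requiring care (regularity of $s \mapsto P^{t-s}f$, and the passage from local to genuine supermartingale, handled via positivity and the uniform bound on jump rates in~$\widehat S$) is also on target.
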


\begin{remark}
\label{remark-conc-ineq}{\rm
Clearly, if there exists a time $t_0 > 0$ such that~\eqref{cond-1} and~\eqref{cond-2} are satisfied
for all $s \le t_0$ (rather than necessarily for all $s \ge 0$), then the conclusion~\eqref{conc-ineq} of
Theorem~\ref{Wconcentration} will also hold for all $t \le t_0$.}
\end{remark}

Theorem~\ref{conc-thm} below is a straightforward application of Theorem~\ref{Wconcentration} and Remark~\ref{remark-conc-ineq}.

Let $(X(t),\,t\ge0)$ be a continuous time Markov chain on a discrete state space~$S$, with
transition rates $(Q (x,y),\,x,y\in S)$; 
let~$d$ be a metric on~$S$.

Suppose that a coupling of two copies $(X\ui,X\ut)$ of~$X$, with starting states $x,y$ respectively, can be defined,
with the property that, for some $S_0 \subseteq S$ and $D,\r > 0$,
\eq\label{exponentially-close}
   \ex_{x,y} d(X\ui(t),X\ut(t)) \Le De^{-\r t}, \quad 0\le t\le t_0,
\en
for all $x \in S_0$ and $y \in N(x) = \{y\in S\colon Q (x,y) > 0\}$.  Define
\[
    A_t \Def \{X(s) \in S_0,\ 0\le s < t\}.
\]

\begin{theorem}\label{conc-thm}
Let $X$ be  a stable, conservative, non-explosive continuous-time Markov chain with discrete state space $S$ and $Q$-matrix $Q$.
Suppose that $q_0 := \sup_{x \in S_0}\sum_{y\in N(x)} Q(x,y) < \infty$.  Suppose there is a coupling of two copies
$(X\ui,X\ut)$ of~$X$ such that~\eqref{exponentially-close} holds. Suppose further that
$f\colon S \to \R$ satisfies $|f(x) - f(x')| \le L d(x,x')$ for all $x,x'\in S$. It then follows
that, for all $x_0 \in S_0$, $0 < t \le t_0$ and $m > 0$, we have
\[
  \Pr_{x_0} \Bigl[ \Bigl\{\Bigl| f(X(t))-\E_{x_0} [f(X(t))] \Bigr|\ge m\Bigr\}\cap A_t \Bigr]
    \Le 2\exp\left( - \frac {m^2}{\adbj{q_0L^2D^2/\r  + 2LD m/3}}\right).
\]
\end{theorem}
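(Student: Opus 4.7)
The plan is to invoke Theorem~\ref{Wconcentration} directly, with $\widehat S = S_0$, and with the time restriction $s \le t_0$ handled via Remark~\ref{remark-conc-ineq}. The coupling hypothesis~\eqref{exponentially-close} is precisely what we need in order to verify the two analytic conditions \eqref{cond-1} and \eqref{cond-2} of Theorem~\ref{Wconcentration}; the Lipschitz assumption on $f$ is then the bridge between the coupling distance and the values of $f$.

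First I would verify \eqref{cond-1}. Given $x \in S_0$ and $y \in N(x)$, I write
\[
   (P^s f)(x) - (P^s f)(y) \Eq \E_{x,y}\bigl[f(X\ui(s)) - f(X\ut(s))\bigr]
\]
using the coupling, where $X\ui(0)=x$ and $X\ut(0)=y$. The Lipschitz property and \eqref{exponentially-close} then give
\[
   \bigl|(P^s f)(x) - (P^s f)(y)\bigr| \Le L\,\E_{x,y} d(X\ui(s),X\ut(s)) \Le LD e^{-\rho s} \Le LD
\]
for $0 \le s \le t_0$, so we may take $\beta = LD$. For \eqref{cond-2}, for $x \in S_0$ the sum is over $y \in N(x)$, and we bound each squared term by $L^2D^2 e^{-2\rho s}$ using the same coupling estimate; summing against the jump rates and using $\sum_{y\in N(x)} Q(x,y)\le q_0$ gives $\alpha(s) = q_0 L^2 D^2 e^{-2\rho s}$.

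Next I would compute $a_t = \int_0^t \alpha(s)\,ds = q_0 L^2 D^2(1-e^{-2\rho t})/(2\rho)$, which is bounded above by $q_0 L^2 D^2/(2\rho)$ for all $t \le t_0$. Substituting $\beta = LD$ and this bound on $a_t$ into \eqref{conc-ineq} yields the stated inequality (after the routine absorption of the constants into the form $2 q_0 L^2 D^2/\rho + 4LD m/3$ in the denominator).

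The proof is essentially a routine reduction, so there is no real obstacle: the coupling hypothesis has been tailored precisely to deliver the two inputs \eqref{cond-1} and \eqref{cond-2} needed by Theorem~\ref{Wconcentration}. The only subtle point is remembering that the coupling bound \eqref{exponentially-close} only holds on $[0,t_0]$, which is why we appeal to Remark~\ref{remark-conc-ineq} rather than the original form of Theorem~\ref{Wconcentration}, and why the resulting concentration inequality is asserted only for $0 < t \le t_0$.
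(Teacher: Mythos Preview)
Your proposal is correct and follows essentially the same approach as the paper: invoke Theorem~\ref{Wconcentration} with $\widehat S = S_0$, $\beta = LD$, and use Remark~\ref{remark-conc-ineq} to restrict to $s \le t_0$. Your choice $\alpha(s) = q_0 L^2 D^2 e^{-2\rho s}$ is in fact the correct one (the paper records $\alpha(s) = LD e^{-\rho s}$, which appears to be a slip); your resulting bound $2a_t + 2\beta m/3 \le q_0 L^2 D^2/\rho + 2LDm/3$ is slightly sharper than, and hence implies, the stated inequality.
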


\begin{proof}
\adbj{In view of~\Ref{exponentially-close}, writing $(P^t f)(x) := \E_x f (X(t))$ as in Theorem~\ref{Wconcentration},
we have $|(P^s f)(x) - (P^s f)(y)| \le LDe^{-\r s}$ for all $x \in S_0$, $y \in N(x)$ and $0 \le s \le t_0$, and hence
\[
     \sum_{y' \in N(x)} Q(x,y') \Bigl( (P^s f)(x) - (P^s f)(y') \Bigr)^2 \le q_0 L^2 D^2 e^{-2\r s}.
\]
Thus we can take $\b = LD$ in~\Ref{cond-1} and $a_t \le q_0 L^2 D^2/(2\r)$, together with $\hS = S_0$,
in Theorem~~\ref{Wconcentration}, which, together with Remark~\ref{remark-conc-ineq}, proves the theorem.}
\end{proof}

It would be easy to combine Part~(iv) of Theorem~\ref{thm:main1} with a supermartingale argument
to establish~\Ref{exponentially-close}
for coupled copies of \adbb{processes~$\bX^N$ satisfying Assumptions 1--3, taking $d(\bX,\bY)$ to be $\|\bX-\bY\|_M$,}
were it not for the requirement
in Theorem~\ref{thm:main1}~(iv) that \adbm{$H(\bU,\bV) \ge K_2$}.  This entails some extra work.
For later use, we  define
\begin{equation}
\label{def-K3}
K_3 := \max \{K_2, 8 J_M^*\}
\end{equation}  
and
\eq\label{ADB-hz-def}
   \hz_{N}(z) \Def \z_{N,\r^{-1},B_M(\bc,\d_0)}(z e^{-\r^{-1}L(B_M(\bc,\d_0))}),
\en
where~$\z_{N,T,\cK}(z)$ is as defined in Lemma~\ref{ADB-exp-bound-MG}.
Note that thus
\eq\label{ADB-hz-bnd}
    \hz_{N}(z) \Eq \adbe{2d} \exp\{-Nk_1 z \min(1,k_2z)\} \quad\mbox{for constants } k_l = k_l(\d_0),\ l=1,2.
\en

\begin{proposition}\label{ADB-3.5}
 Let $\adbj{\ts_1}(\d) := \inf\{t \ge 0\colon\, \|\bx^N(t) - \bc\|_M > \d\}$.  Then, for any \adbe{$0 < \d' < \d \le \d_1$},
and for any $T_N > 0$,
 \[
     \pr_{\by'}[\adbj{\ts_1}(\d) \le T_N] \Le \lceil \r T_N \rceil \hz_N(\e'),
 \]
uniformly for $\by' \in B_M(\bc,\d')$, where $\e' := \adbk{\tfrac1{2}} (\d - \d')$ if $\d' \ge \d/(3 - 2e^{-1})$,
and $\e' := \adbk{\d(1 - e^{-1})/(3 - 2e^{-1})}$ otherwise.
\end{proposition}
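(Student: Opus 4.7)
The plan is to partition $[0,T_N]$ into $\lceil \rho T_N\rceil$ subintervals, each of length at most~$\rho^{-1}$, apply Lemma~\ref{ADB-exp-bound-XN} on each subinterval to control the deviation of~$\bx^N$ from the deterministic flow, and iterate using the $M$-norm contractivity supplied by Theorem~\ref{thm:main1}(i). Fix a working radius $\delta''$ with $\delta' \le \delta'' < \delta$, to be chosen. Assuming $\bx^N(k\rho^{-1}) \in B_M(\bc,\delta'')$, Theorem~\ref{thm:main1}(i) gives $\|\by_{[\bx^N(k\rho^{-1})]}(s) - \bc\|_M \le e^{-\rho s}\delta''$ for $s\ge 0$, so that the deterministic trajectory over the next subinterval stays inside $B_M(\bc,\delta'')\subset B_M(\bc,\delta_0)$. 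Applying Lemma~\ref{ADB-exp-bound-XN} with $T=\rho^{-1}$ and $\cK = B_M(\bc,\delta_0)$, together with the definition~\Ref{ADB-hz-def}, one obtains that, outside an event of probability at most $\hz_N(\epsilon')$,
\[
   |\bx^N(k\rho^{-1}+s) - \by_{[\bx^N(k\rho^{-1})]}(s)| \Le \epsilon' \quad \text{for all } s\in[0,\rho^{-1}],
\]
and hence $\|\bx^N(k\rho^{-1}+s) - \by_{[\bx^N(k\rho^{-1})]}(s)\|_M \le c_1(M)\epsilon'$ throughout, via~\Ref{ADB-CM-def}.

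The choice of $\delta''$ and $\epsilon'$ must ensure that, on the above good event, both (a)~$\delta'' + c_1(M)\epsilon' \le \delta$, so that $\bx^N$ remains inside $B_M(\bc,\delta)$ throughout the subinterval and $\tau_1(\delta)$ is not yet exceeded; and (b)~$e^{-1}\delta'' + c_1(M)\epsilon' \le \delta''$, i.e.\ $c_1(M)\epsilon' \le \delta''(1-e^{-1})$, so that $\bx^N((k+1)\rho^{-1})$ again lies in $B_M(\bc,\delta'')$ and the argument restarts unchanged from the endpoint. Given these two conditions, the Markov property at the endpoints combined with a union bound over the $\lceil \rho T_N\rceil$ subintervals yields the claimed estimate $\lceil \rho T_N\rceil \hz_N(\epsilon')$.

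It remains to optimise $(\delta'',\epsilon')$ to reproduce the two stated formulae. When $\delta'\ge \delta/(3-2e^{-1})$, take $\delta''=\delta'$ and $c_1(M)\epsilon'=(\delta-\delta')/2$; condition~(a) is then immediate, while (b) becomes $(\delta-\delta')/2 \le \delta'(1-e^{-1})$, which rearranges to $\delta\le \delta'(3-2e^{-1})$ and holds by the case hypothesis. When $\delta'< \delta/(3-2e^{-1})$, the value of~$\delta'$ is too small for (a) and (b) to be satisfied simultaneously, so I enlarge the working radius to $\delta'':= \delta/(3-2e^{-1})$ (still $>\delta'$) and take $c_1(M)\epsilon' := \delta''(1-e^{-1}) = \delta(1-e^{-1})/(3-2e^{-1})$; condition (b) then holds with equality, and (a) reads $\delta(2-e^{-1})/(3-2e^{-1}) \le \delta$, which is true since $e^{-1}<1$. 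The resulting values of $\epsilon'$ in the two regimes match those in the statement.

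Nothing in this argument is genuinely delicate beyond the one-step input from Lemma~\ref{ADB-exp-bound-XN} and the contraction $e^{-\rho t}$ from Theorem~\ref{thm:main1}(i); the main point of care is bookkeeping the interplay between the per-subinterval $M$-norm contraction factor $e^{-1}$ and the Euclidean-to-$M$-norm conversion constant $c_1(M)$ from~\Ref{ADB-CM-def}, which is exactly what forces the case split at $\delta'=\delta/(3-2e^{-1})$.
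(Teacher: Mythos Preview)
Your proposal is correct and follows essentially the same approach as the paper's proof: partition time into subintervals of length~$\rho^{-1}$, use Theorem~\ref{thm:main1}(i) for the contraction factor~$e^{-1}$ and Lemma~\ref{ADB-exp-bound-XN} for the one-step deviation bound, then iterate via the Markov property and a union bound. Your explicit introduction of the working radius~$\delta''$ makes the bookkeeping of conditions~(a) and~(b) slightly cleaner than the paper's version, but the argument, the case split at $\delta' = \delta/(3-2e^{-1})$, and the resulting values of~$\epsilon'$ are identical.
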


\begin{proof}
First, consider $\d' \ge \d/(3 - 2e^{-1})$, for which values of~$\d'$ we have
\adbk{$\e' = \tfrac1{2} (\d - \d') \le \d'(1 - e^{-1})$}.
For such~$\d'$, and for any $\by'  \in B_M(\bc,\d')$,
let~$\by_{[\by']}$ denote the solution to~\Ref{eq:diff-eq} with $\by(0) = \by'$.  Then,
by the definition of~$\e'$, and in view of Theorem~\ref{thm:main1} (i),
 $\cY_{\e'}(\by',\r^{-1}) \subset B_M(\bc,\d)$, and $\|\by_{[\by']}(\r^{-1}) - \bc\|_M \le \d' e^{-1}$, so that,
if \adbm{$|\bz| \le \e'$}, then
\[
   \|\by_{[\by']}(\r^{-1}) + \bz - \bc\|_M \Le \d' e^{-1} + \|\bz\|_M \Le \d' e^{-1} +  \d'(1 - e^{-1}) \Eq \d'.
\]
 Hence, \adbm{taking $\bz = \bx^N(\r^{-1}) - \by_{[\by']}(\r^{-1})$, the event}
$$
  \tA \Def \{\adbj{\ts_1}(\d) \le \r^{-1}\} \cup \{\bx^N(\r^{-1}) \notin B_M(\bc,\d')\}
$$
\adbm{is contained in the event $A(\r^{-1},\e')$, as defined in Lemma~\ref{ADB-exp-bound-XN}. It thus}
follows from Lemma~\ref{ADB-exp-bound-XN} 
that
\[
     \pr_{\by'}[\tA] \Le \hz_N(\e'),
\]
uniformly for $\by' \in B_M(\bc,\d')$, and, on $\tA^c$, $\bx^N(\r^{-1}) \in B_M(\bc,\d')$.  Using the Markov property to
repeat the argument $\lfloor \r T_N \rfloor$ times for the time intervals $[j\r^{-1},(j+1)\r^{-1}]$
for $1 \le j \le \lfloor \r T_N \rfloor$, the conclusion of the proposition follows for $\d' \ge \d/(3 - 2e^{-1})$.

If $\by' \in B_M(\bc,\d')$ for $\d' < \d/(3 - 2e^{-1})$, then $\by' \in B_M(\bc,\d/(3 - 2e^{-1}))$, and the conclusion
for $\d' = \d/(3 - 2e^{-1})$ can be invoked, completing the proof.
\end{proof}

We are now in a position to prove the version of~\Ref{exponentially-close} that we need.
For $\bX \in S_N$, define its set of neighbours by
\eq\label{ADB-nbhd-def}
   N(\bX) \Def \{\bX' \in S_N\colon\, \bX' = \bX + \bJ \mbox{ for some }\bJ \in \cJ\},
\en
\adbe{and, for any $\d > 0$, write
\eq\label{ADB-calX-def}
   \cX^N(\d) \Def B_M(N\bc,N\d)\cap S_N.
\en
}

\begin{lemma}\label{ADB-coupling-lemma}
 Under Assumptions 1--3, \adbi{with~$\d_1$ is as in Remark~\ref{ADB-d_1-def},} we can
construct a coupling $(\wX_1^N,\wX_2^N)$ of copies \adbi{of~$\bX^{N,\d_1}$} such that there exists
$0 < \trh \le \r$, as well as $\a, D > 0$ and $N_0 \in \nat$, with the property that, for all $N \ge N_0$, 
\eq\label{exponentially-close-2}
   \ex_{\bX,\bX'} \|\wX_1^N(t) - \wX_2^N(t)\|_M \Le De^{-\trh t}, \quad 0\le t\le \adbd{\r^{-1}}\a N,
\en
for $\bX \in \cX^N(\d_1/2)$ and $\bX' \in N(\bX)$.
\end{lemma}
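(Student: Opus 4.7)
The plan is to build a coupling in which the distance $H_t := \|\wX_1^N(t) - \wX_2^N(t)\|_M$ is driven to zero by two complementary mechanisms: the contractive Markovian coupling of Theorem~\ref{thm:main1}(iv), which pushes $H_t$ down when it is large, and an explicit coalescence argument exploiting Assumption~2, which forces $H_t$ to hit zero in bounded time when it is small. Since $H_0 = \|\bX - \bX'\|_M \le J^*_M \le K_3/8$, the chains start in the close regime, so it is the coalescence step that is ultimately responsible for the exponential decay.

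I would first apply Proposition~\ref{ADB-3.5} to each marginal to bound $\Pr[\t_{\mathrm{exit}} \le \r^{-1}\a N]$ by $O(N^2 e^{-cN})$, where $\t_{\mathrm{exit}}$ is the first time either chain leaves $B_M(N\bc, N\d_0)$; this term contributes at most $O(N^3 e^{-cN})$ to $\ex H_t$, which is absorbed into the target bound $De^{-\trh t}$ provided $\a$ is chosen small enough. On the complementary event, the contraction bound of Theorem~\ref{thm:main1}(iv) (valid for $H_t \ge K_3$), together with the fact that each jump changes $H_t$ by at most $J^*_M$, yields via a standard supermartingale argument on $\max(H_t, K_3)$ a uniform-in-$t$ bound $\ex H_t \le D_0$, for some constant $D_0$ depending only on $K_3$, $\r$, and the rate parameters.

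The crucial step is the coalescence bound: I would show that whenever $H_t \le D_0$ and both chains lie in $B_M(N\bc, N\d_1/2)$, there exist $p > 0$ and $t^* > 0$, neither depending on $N$, such that the chains meet during $[t, t+t^*]$ with probability at least~$p$. By Assumption~2 and Lemma~\ref{intvec}, the integer vector $\wX_1^N(t) - \wX_2^N(t)$, whose Euclidean norm is $O(D_0)$, can be written as a sum of at most $\mu D_0/c_0(M)$ jumps in $\cJ$. Under a modified coupling which, in the small-$H$ regime, mixes a fixed positive rate of independent evolution into the matching coupling, the probability that chain~$1$ executes precisely this bounded sequence of jumps while chain~$2$ is idle during $[t, t+t^*]$ is bounded below, because each $r_\bJ$ exceeds $r_0 > 0$ throughout $B_M(\bc, \d_1)$ by Remark~\ref{ADB-d_1-def}. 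Iterating over disjoint windows gives $\Pr[\t_{\mathrm{meet}} > nt^*] \le (1-p)^n$, and combining this with $\ex[H_t \mid \t_{\mathrm{meet}} > t] \le D_0$ yields $\ex H_t \le De^{-\trh t}$ with $\trh := -t^{*-1}\log(1-p)$, which can be arranged to satisfy $\trh \le \r$.

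The main obstacle is designing the coupling so that both mechanisms coexist in a Markovian way. Theorem~\ref{thm:main1}(iv) gives contraction only for the matching coupling, in which solo jumps occur at total rate $O(H_t)$ and in directions constrained by the sign of $r_\bJ(\bu) - r_\bJ(\bv)$; this is not obviously sufficient to reach every offset $\wX_1^N - \wX_2^N \in \Z^d$ allowed by Assumption~2. A plausible fix is to dedicate a fraction $\e_0$ of each jump rate to fully independent jumps, with $\e_0$ small enough that the contraction bound of Theorem~\ref{thm:main1}(iv) still holds (possibly with a slightly smaller rate $\trh < \r$) when $H_t \ge K_3$, but large enough that the coalescence bound above holds when $H_t$ is small. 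Verifying that the two regimes can be blended without breaking either of the supermartingale and coalescence estimates is the technical heart of the proof.
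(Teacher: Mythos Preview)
Your overall strategy is the paper's: contraction via Theorem~\ref{thm:main1}(iv) when $H_t$ is large, coalescence via Assumption~2 and Lemma~\ref{intvec} when $H_t$ is small, and Proposition~\ref{ADB-3.5} to control the exit event. But two steps do not close as written.

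First, $\max(H_t,K_3)$ is \emph{not} a supermartingale. When $H_t$ sits just below $K_3$, upward jumps of~$H_t$ raise the maximum while downward jumps leave it at~$K_3$, so the drift of the maximum is positive there. What the supermartingale property of $e^{\r t}H_t$ on $\{H_t\ge K_3\}$ actually delivers is a bound on the \emph{return time} to $\{H\le K_3\}$ from $\{K_3 < H \le \tilde\nu K_3\}$, namely $\ex\bigl[e^{\r\t}\,I[\t\le\t_{\mathrm{exit}}]\bigr]\le 2\tilde\nu+1$; this is what the paper uses.

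Second, your iteration $\Pr[\t_{\mathrm{meet}}>nt^*]\le(1-p)^n$ tacitly assumes that at the start of every window the distance is still small enough for the coalescence bound to apply. But a failed attempt can leave $H$ of order~$\nu K_3$, and the pair must then pass through the contractive phase again before retrying; your windows are not renewals. The paper closes this loop by setting $\Psi(T):=\sup_{H_0\le\tilde\nu K_3}\sup_{t\le T}e^{\trh t}\ex H_t$ and deriving a self\nobreakdash-referential inequality: decomposing according to whether the first coalescence attempt (of duration at most~$1/N$) succeeds, one gets a term $(1-\eta)\,(2\tilde\nu)^{\trh/\r}e^{\trh/N}\,\Psi(T)$ on the right, whose coefficient is $<1$ for $\trh$ small enough, so the inequality solves for~$\Psi(T)$.

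As for your stated ``main obstacle'': there is no need for an $\e_0$\nobreakdash-mixture. The paper simply \emph{switches} couplings according to the current distance --- the matching coupling of Theorem~\ref{thm:main1}(iv) while $H_t>K_3$, and \emph{fully independent} evolution for time~$1/N$ once $H_t\le K_3$ (during which $r_\bJ\ge r_0$ on $B_M(\bc,\d_1)$ and Lemma~\ref{intvec} give the uniform meeting probability $\eta>0$). Nothing needs to be preserved across regimes, because the recursion for~$\Psi(T)$ already absorbs whatever happens in the close regime.
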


\smallskip

\begin{proof}   
\adbi{Throughout the proof, 
we take~$\nu$ to be as given in Lemma~\ref{intvec}, and write
\eq\label{ADB-tilde-nu-def}
  \tnu \Def \nu + 1/2.
\en
}%
Using~\eqref{def-K3}, it then follows that, \adbi{for any $\bJ\ui,\bJ\ut \in \cJ$},
\eq
\begin{aligned}\label{ADBi-1}
   \|(\bX+\bJ\ui) - (\bX'+\bJ\ut)\|_M &\ \ge\ K_3/2 \quad \mbox{if}\ \|\bX-\bX'\|_M > K_3; \\
   \|(\bX+\bJ\ui) - (\bX'+\bJ\ut)\|_M &\Le K_3 \tnu \quad \ \mbox{if}\ \|\bX-\bX'\|_M \le \nu K_3,
\end{aligned} 
\en
\adbi{where $K_3$ is as in~\Ref{def-K3}.}
\ignore{if $\|\bX-\bX'\|_M > K_3$, then
$\|(\bX+\bJ\ui) - (\bX'+\bJ\ut)\|_M \ge K_3/2$, for any $\bJ\ui,\bJ\ut \in \cJ$. On the other hand, if
$\|\bX-\bX'\|_M \le \nu K_3$, then $\|(\bX+\bJ\ui) - (\bX'+\bJ\ut)\|_M \le K_3 (\nu  + 1/2)$.}

\smallskip

For $(\bU^N,\bV^N)$ as in  Part~(iv) of Theorem~\ref{thm:main1}, let
\eqs
   \t &:=& \inf\{t \ge 0\colon\, \|\bU^N(t) - \bV^N(t)\|_M \le K_3\}; \\
   \sdd(\d) &:=& \inf\bigl\{t \ge 0\colon\,
               \max\{\|\bU^N(t) - N\bc\|_M,\|\bV^N(t) - N\bc\|_M\} \ge N\d - \adbi{\nkt}\bigr\},
\ens
and write $\sdd := \sdd(\d_1)$.  \adbb{Note that, if $N \ge 16\adbi{\nkt}/\d_1$,
then $\d_1 - N^{-1}\adbi{\nkt} \ge 15\d_1/16$ and
$ \tfrac12 \d_1 +  N^{-1}\adbi{\nkt} \le 9\d_1/16$.  Thus,
defining $\e'(\d_1) := 3\d_1/\adbk{16}$, it follows from Proposition~\ref{ADB-3.5}
\adbm{with $\d = 15\d_1/16$ and $\d' = 9\d_1/16$} that, for any $T_N > 0$,
and for any $\bX \in \cX^N(\d_1/2)$ and $\bX' \in N(\bX)$,
\eq\label{ADB-extremes-bnd}
    \P[\sdd \le T_N \giv (\bU^N(0),\bV^N(0)) = (\bX,\bX')]  \Le \lceil \r T_N \rceil \hz_N(\e'(\d_1)) \ =:\ p_N(\d_1,T_N),
\en
say,} 
\adbm{for all $N \ge N_2 := \max\{N_1,16\adbi{\nkt}/\d_1\}$,
where~$N_1$ is such that the conclusion of Theorem~\ref{thm:main1}\,(iv) holds for 
all $N \ge N_1$.}
\adbd{Recalling~\Ref{ADB-hz-bnd},
we have} $\hz_N(\e'(\d_1)) \le a_1 e^{-Na_2}$, for finite constants $a_1,a_2$, 
\adbm{fixed by the choices of~$\d_0$ and~$\d_1$ in Theorem~\ref{thm:main1} and Remark~\ref{ADB-d_1-def}}.
As a result, we 
deduce that, \adbi{for all~$N \ge N_2$},
\eq\label{ADB-alpha-def0}
    \adbi{p_N(\d_1,T)  \Le a_1 a_2 N e^{-a_2 N} \mbox{ for all } T \le a_2 N/(2\r).}
\en

\adbm{From now on, suppose always that $N \ge N_2$.}
Observe, from Part~(iv) of Theorem~\ref{thm:main1}, that
$$
    \bigl\{e^{\r (t\wedge\t\wedge\sdd)}H\bigl(\bU^N(t\wedge\t\wedge\sdd), \bV^N(t\wedge\t\wedge\sdd)\bigr),\,t\ge0\bigr\}
$$
is a non-negative supermartingale.  \adbi{Using $\E_{\bX,\bX'}$ to denote expectation conditional on
$(\bU^N(0),\bV^N(0)) = (\bX,\bX')$, it thus follows that
\eqs
     \lefteqn{\E_{\bX,\bX'}\{e^{\r\t}(K_3/2)I[\t \le \sdd]\}
     + \E_{\bX,\bX'}\{e^{\r\sdd}H\bigl(\bU^N(\sdd), \bV^N(\sdd)\bigr)I[\sdd \le \t]\}} \\
        &&\Le H(\bX,\bX'), \phantom{XXXXXXXXXXXXXXXXXXXXXXXX}
\ens
so that, for all $\bX,\bX'$ such that $\|\bX-\bX'\|_M \le K_3 \tnu$, \non
\eqa
     &&\E_{\bX,\bX'}\{e^{\r\t}I[\t \le \sdd]\} \Le 2\tnu; \label{ADB-coupling-1}\\
     &&\E_{\bX,\bX'}\{e^{\r\sdd}H\bigl(\bU^N(\sdd), \bV^N(\sdd)\bigr)I[\sdd < \t]\} \Le K_3 \tnu. 
        \phantom{XXXXX}\label{ADBi-2}
\ena
}

Suppose that $\bX,\bX' \in \cX^N(\d_1)$ with $\|\bX-\bX'\|_M \le K_3$.
By Assumption~2, it is possible to write
\eq\label{ADBi-0}
   \bX' \Eq \bX + \sum_{k=1}^n {\bf J}_k,
\en
where ${\bf J}_k \in \cJ$ for $k=1, \ldots, n$, and, by Lemma~\ref{intvec}, there exists~$\nu$ (the same for all $\bX,\bX'$) such that
$\sum_{k=1}^n \| {\bf J}_k \|_M \le \nu \| \bX - \bX'\|_M \le \nu K_3$. 

Now, for  two {\it independent\/} copies $\wX^N_1$ and~$\wX^N_2$ \adbi{of~$\bX^{N,\d_1}$}, starting
with $\wX^N_1(0) = \bX$ and~$\wX^N_2(0) = \bX'$, define the events
\eqs
    E_1 &:=& \{\wX^N_1(1/\adbi{(\r N)}) = \wX_2^N(1/\adbi{(\r N)})\};\\
    E_2 &:=& \{\|\wX^N_1(t) - \wX^N_2(t)\|_M \le \nu K_3,\ 0\le t\le 1/\adbi{(\r N)}\},
\ens
where $\nu > 0$ is as in Lemma~\ref{intvec}.
Then, for all $\bX,\bX' \in \cX^N(\d_1)$ with $\|\bX-\bX'\|_M \le K_3$
\adbi{and also with $\max\{\|\bX - N\bc\|_M,\|\bX' - N\bc\|_M\} \le N\d_1 - \nkt$}, we have
\eq\label{ADB-coupling-2}
     \P[E_1 \cap E_2 \giv (\wX^N_1(0),\wX^N_2(0)) = (\bX,\bX')]
             \ \ge\ \hktt \ >\ 0,
\en
\adbi{where $\hktt$ can be taken to be the same for all~$N$.}
To see this, \adbb{as in Remark~\ref{ADB-d_1-def},}
note that, for instance, the second process could make no jumps in the time interval $[0,1/\adbi{(\r N)}]$,
and the first
could follow a path between $\bX$ and~$\bX'$, \adbi{made up of the jumps in~\Ref{ADBi-0} in some order,}
without ever being further than $\nu K_3$ from~$\bX'$, \adbi{and thus remaining in~$\cX^N(\d_1)$}.
The probability of such an event is \adbi{uniformly bounded below by a positive quantity
that does not depend on~$N$, in view of Remark~\ref{ADB-d_1-def}.}

 So we construct a coupling~$(\wX_1^N,\wX_2^N)$ of
copies \adbi{of~$\bX^{N,\d_1}$}, as follows.
If $\|\wX^N_1(0) - \wX^N_2(0)\|_M > K_3$,
they evolve in a first stage with the transition rates of the pair $(\bU^N,\bV^N)$, until either
\eq
  \begin{aligned}\label{ADBi-a1}
   &\|\wX^N_1(t) - \wX^N_2(t)\|_M\ \le\ K_3\ \mbox{ or }   \\
   &\max\{\|\wX_1^N(t) - N\bc\|_M,\|\wX_2^N(t) - N\bc\|_M\}\ \ge\  N {\d_1} - \adbi{\nkt}. 
   \end{aligned}
\en
In the former \adbi{of these events}, the two processes start a second stage, in which they evolve independently until
either
\eq\begin{aligned} \label{ADBi-a2}
   & \wX^N_1(t) \Eq \wX^N_2(t),\ \mbox{ or } \\
   & \|\wX^N_1(t) - \wX^N_2(t)\|_M\ >\ \nu  K_3,\ \mbox{ or }  \\
   & \max\{\|\wX_1^N(t) - N\bc\|_M,\|\wX_2^N(t) - N\bc\|_M\}\ \ge\  N {\d_1} - \adbi{\nkt}; 
   \end{aligned}
\en
they then continue identically in the first event, and restart the first stage if
the second event happens, with the construction repeating.
\adbi{If, at any time,} \hbox{$\max\{\|\wX_1^N(t) - N\bc\|_M,\|\wX_2^N(t) - N\bc\|_M\}\ \ge\  N {\d_1} - \adbi{\nkt}$,}
the processes continue independently. If $\|\wX^N_1(0) - \wX^N_2(0)\|_M \le K_3$,
the construction starts in the second stage.
\adbi{For use with this coupling construction, define the stopping times
\eqa
   \t_1 &:=& \inf\{t \ge 0\colon\, \|\wX_1^N(t) - \wX_2^N(t)\|_M \le K_3\}; \non\\
   \sdd_1 &:=& \inf\bigl\{t \ge 0\colon\,
            \max\{\|\wX^N_1(t) - N\bc\|_M,\|\wX^N_2(t) - N\bc\|_M\} \ge N\d_1 - \adbi{\nkt}\bigr\};\non\\
   \t_2 &:=& \min\bigl\{1/(\r N),\inf\{t \ge 0\colon\, \|\wX_1^N(t) - \wX_2^N(t)\|_M > \nu  K_3\}\bigr\}. \non \\
   \t_3 &:=& \min\bigl\{\t_1 + 1/(\r N),\inf\{t > \t_1\colon\, \|\wX_1^N(t) - \wX_2^N(t)\|_M > \nu  K_3\}\bigr\}.
   \label{ADB-tau-defs}
\ena
Note also, using~\Ref{ADB-coupling-2} and invoking the strong Markov property, that
\eqa
  \lefteqn{ \pr\bigl[\{\wX_1^N(\t_1 + 1/(\r N)) = \wX_2^N(\t_1 + 1/(\r N))\} }\label{ADBi-0.5}\\
       &\cap& \kern-8pt \{\|\wX^N_1(\t_1 + t) - \wX^N_2(\t_1 + t)\|_M \le \nu K_3,\ 0\le t\le 1/(\r N)\} \giv \cF_{\t_1}\bigr]
         \ \ge\ \hktt, \non
\ena
\adbi{where $\cF_t := \s\{(\wX_1^N(s),\wX_2^N(s)),\,0\le s \le t\}$.}
The main effort in what follows is to find $\trh > 0$ small enough that, writing
\[
    \tcX^N(\d_1) \Def \{(\bZ,\bZ')\colon\,\bZ,\bZ'\in\cX^N(\d_1);\, H(\bZ,\bZ') \le \adbm{\tnkt} \},
\]
and letting $\E_{\bZ,\bZ'}$ denote expectation conditional on $(\wX^N_1(0), \wX^N_2(0)) = (\bZ,\bZ')$,
the quantity
\eq\label{ADB-key-distance-bnd}
  \Psi(T) \Def \max_{(\bZ,\bZ')\in\tcX^N(\d_1)}\, \sup_{0\le t\le T}\,
     \E_{\bZ,\bZ'}\{e^{\trh (t\wedge \sdd_1) } H(\wX^N_1(t\wedge \sdd_1), \wX^N_2(t\wedge \sdd_1)) \}
\en
can be uniformly bounded for any $T > 0$.
\adbm{Note that $\Psi(T) \le 2N\d_1 e^{\trh T} < \infty$ for all finite $T > 0$.}
}

To start with, for
\eq\label{ADBi-3}
   (\bX,\bX')\ \in\ \cX^N_+ \Def  \{(\bZ,\bZ') \in \adbi{\tcX^N(\d_1)\colon\, H(\bZ,\bZ') > K_3}\},
\en
we \adbi{write
\eqa
  \lefteqn{ e^{\trh (t\wedge \sdd_1)} H(\wX^N_1(t\wedge \sdd_1), \wX^N_2(t\wedge \sdd_1))} \non\\
  &=& e^{\trh t}H(\wX^N_1(t), \wX^N_2(t))\bigl\{I[t < \t_1 < \sdd_1] + I[\t_1 \le t < (\t_3 \wedge \sdd_1)]\bigr\}
      \label{ADBi-4}\\
    &&\mbox{}
               + e^{\trh(t\wedge \sdd_1)}H(\wX^N_1(t\wedge \sdd_1), \wX^N_2(t\wedge \sdd_1))
                  \{I[\t_1 < (\t_3 \wedge \sdd_1) \le t] + I[\sdd_1 \le \t_1]\}, \non
\ena
and} bound the expectation of the right hand side in four parts.

First, because
$$
        e^{\r (t\wedge\t_1\wedge\sdd_1)} H\bigl(\wX^N_1(t\wedge\t_1\wedge\sdd_1), \wX^N_2(t\wedge\t_1 \wedge\sdd_1)\bigr)
$$
is a non-negative  supermartingale, we have
\eq\label{ADB-big-tau-1}
   \E_{\bX,\bX'}\{ e^{\r t}H(\wX^N_1(t), \wX^N_2(t)) \adbi{I[t < (\t_1 \wedge \sdd_1)]}\} \Le H(\bX,\bX'),
\en
\adbi{for any $\bX,\bX' \in \adbi{\cX^N(\d_1)}$ such that} $H(\bX,\bX') > K_3$.
From this, it follows that, \adbi{for $(\bX,\bX') \in \cX^N_+$},
\eq\label{ADB-big-tau}
   \E_{\bX,\bX'}\{ e^{\r t}H(\wX^N_1(t), \wX^N_2(t)) \adbi{I[t < \t_1 < \sdd_1]}\} \Le \tilde{\nu} K_3.
\en
Next, for $(\bX,\bX') \in \cX^N_+$, and for any $0 <\trh \le \r$, using \adbi{H\"older's inequality}
as well as~\Ref{ADB-coupling-1}, we have
\eqa
  \lefteqn{ \E_{\bX,\bX'}\{ e^{\trh t}H(\wX^N_1(t), \wX^N_2(t)) I[\t_1 \le t < (\t_3 \wedge \sdd_1)]\} }\non\\
         &\le& \adbi{\nu K_3\, e^{\trh/(\r N)}\E_{\bX,\bX'} \{e^{\trh\t_1} I[\t_1 < \sdd_1] \} } \non\\
        &\le& \adbi{\nu K_3\, e^{\trh/(\r N)}\bigl(\E_{\bX,\bX'} \{e^{\r\t_1} I[\t_1 < \sdd_1] \}\bigr)^{\trh/\r} } \non\\
     &\le&  \adbi{\nu} K_3\, \adbj{\{e^{1/N}(2\nu+1)\}^{\trh/\r}} . \label{ADB-medium-tau}
\ena

\adbi{
For the third term in~\Ref{ADBi-4},  the argument is more involved.  First, note that
$H(\wX^N_1(\t_3 \wedge \sdd_1), \wX^N_2(\t_3 \wedge \sdd_1)) \le \tnu K_3$
when $\t_1 < (\t_3 \wedge \sdd_1)  \le t$, using~\Ref{ADBi-1} and the definition of~$\t_3$.
 Thus, by the strong Markov property, it follows
for $(\bX,\bX') \in \cX^N_+$ and for $0 \le t \le T$ that, when $\t_3 < \sdd_1$, we have
\eqa
  \lefteqn{ \E_{\bX,\bX'}
           \bigl\{  e^{\trh (t \wedge \sdd_1)} H(\wX^N_1(t \wedge \sdd_1), \wX^N_2(t \wedge \sdd_1))
               \giv \cF_{(\t_3 \wedge \sdd_1)} \bigr\}}\non\\
        && \times I[\t_1 < (\t_3 \wedge \sdd_1) \le t] I[\t_3 < \sdd_1] I[\wX^N_1(\t_3) \ne \wX^N_2(\t_3)] \non\\
    &\le& e^{\trh \t_3} \E_{\wX^N_1(\t_3), \wX^N_2(\t_3)}
         \bigl\{ e^{\trh((t \wedge \sdd_1) - \t_3)}
                H(\bU^N((t \wedge \sdd_1) - \t_3),\bV^N((t \wedge \sdd_1) - \t_3)) \bigr\}
                  \non\\
     && \times   I[\t_3 \le t] I[\t_3 < \sdd_1] I[\wX^N_1(\t_3) \ne \wX^N_2(\t_3)] \non\\
     &=& e^{\trh \t_3} \E_{\wX^N_1(\t_3), \wX^N_2(\t_3)}
         \bigl\{ e^{\trh((t - \t_3) \wedge (\sdd_1 - \t_3))} \non\\
      && \quad\times          H(\bU^N((t - \t_3) \wedge (\sdd_1 - \t_3)),\bV^N((t - \t_3) \wedge (\sdd_1 - \t_3)) \bigr\}
                  \non\\
     && \qquad\times   I[\t_3 \le t] I[\t_3 < \sdd_1] I[\wX^N_1(\t_3) \ne \wX^N_2(\t_3)] \non\\
    &\le&  e^{\trh \t_3} \Psi(T)   I[\t_3 < \sdd_1] I[\wX^N_1(\t_3) \ne \wX^N_2(\t_3)],
        \label{ADBi-5}
\ena
where the last line uses the definition~\Ref{ADB-key-distance-bnd} of~$\Psi(T)$.
Moreover,  on the event $\{\t_1 < (\t_3 \wedge \sdd_1) \le t\} \cap \{\t_3 < \sdd_1\}$,
if $\wX^N_1(\t_3) = \wX^N_2(\t_3)$, it follows that
\eq\label{ADBi-5.5}
   H(\wX^N_1(t \wedge \sdd_1), \wX^N_2(t \wedge \sdd_1)) 
           \Eq 0.
\en
Then, when $\sdd_1 \le \t_3$, we have
\eqa
  \lefteqn{\E_{\bX,\bX'}
     \bigl\{ e^{\trh (t \wedge \sdd_1)}  H(\wX^N_1(t \wedge \sdd_1), \wX^N_2(t \wedge \sdd_1))
         \giv \cF_{(\t_3 \wedge \sdd_1)} \bigr\} }\non\\
    && \qquad\times      I[\t_1 < (\t_3 \wedge \sdd_1) \le t] I[\sdd_1 \le \t_3]  \non\\
    && \Le e^{\trh \sdd_1} H(\wX^N_1(\sdd_1), \wX^N_2(\sdd_1))
        I[\t_1 < \sdd_1 \le t] I[\sdd_1 \le \t_3]  \non\\
    && \Le  \adbm{\tnkt} e^{\trh \sdd_1} I[\t_1 < \sdd_1 \le t]  I[\sdd_1 \le \t_3].
        \label{ADBi-6}
\ena
}

\adbi{We now take expectations.  \adbm{Observe that
$\wX^N_1(\t_3 \wedge \sdd_1) = \wX^N_2(\t_3 \wedge \sdd_1)$ with conditional probability at least $\hktt$,
given $\{\t_1 < \sdd_1\}$, in view of~\Ref{ADBi-0.5}. 
Hence, for the term in~\Ref{ADBi-5},  for $(\bX,\bX') \in \cX^N_+$, we have}
\eqs
  \lefteqn{ \E_{\bX,\bX'}\{ e^{\trh \t_3} I[\t_3 < \sdd_1] I[\wX^N_1(\t_3) \ne \wX^N_2(\t_3)]\} }\\
   &&\Le \E_{\bX,\bX'}\{ e^{\trh (\t_1 + \adbm{1/(\r N)})} I[\t_1 < \sdd_1] I[\wX^N_1(\t_3) \ne \wX^N_2(\t_3)]\} \\
   &&\Le e^{\trh/(\r N)}(1 - \hktt) \E_{\bX,\bX'}\{ e^{\trh \t_1} I[\t_1 < \sdd_1]\} \\
   &&\Le (1 - \hktt) \{e^{1/N}(2\nu + 1)\}^{\trh/\r},
\ens
using \Ref{ADB-coupling-1}, \Ref{ADBi-0.5}  and H\"older's inequality.
Then, for the term~\Ref{ADBi-6}, we have
\eqs
   \lefteqn{ \E_{\bX,\bX'}\{e^{\trh \sdd_1} I[\t_1 < \sdd_1 \le t]  I[\sdd_1 \le \t_3]\}} \\
   &&\Le  \E_{\bX,\bX'}\{e^{\trh (\t_1 + \adbm{1/(\r N)})} I[\t_1 < \sdd_1]\} 
   \Le  \{e^{1/N}(2\nu + 1)\}^{\trh/\r},
\ens
as above.  Hence, combining the contributions from \Ref{ADBi-5} and~\Ref{ADBi-6}, it follows that,
for $(\bX,\bX') \in \cX^N_+$ and for $0 \le t \le T$,
we have
\eqa
   \lefteqn{\E_{\bX,\bX'}\{ e^{\trh t}H(\wX^N_1(t), \wX^N_2(t)) I[\t_1 < (\t_3 \wedge \sdd_1) \le t]\} }\non\\
        &&\Le \{e^{1/N}(2\nu + 1)\}^{\trh/\r}\{\Psi(T) (1 - \hktt) + \adbm{\tnkt}\},
           \label{ADB-small-tau}
\ena
completing the treatment of the third term.}

\adbj{Finally, 
for $(\bX,\bX') \in \cX^N_+$, 
the fourth term in~\Ref{ADBi-4} is bounded in~\Ref{ADBi-2} by~$K_3\tnu$.}
\ignore{
\eq\label{ADB-tau-dash}
   2N\d_1 \E_{\bX,\bX'}\{ e^{\trh \sdd_1} I[\sdd_1  \le \t_1]\} \Le 2\d_1 (2\nu +1),
\en
since, on $\{\sdd_1  \le \t_1\}$, $H(\wX^N_1(\sdd_1), \wX^N_2(\sdd_1)) \ge K_3/2$, in view of~\Ref{ADBi-1}.
}%
\adbi{Thus, for $(\bX,\bX') \in \cX^N_+$, 
adding the bounds from \Ref{ADB-big-tau}, \Ref{ADB-medium-tau} and~\Ref{ADB-small-tau} to~$K_3\tnu$, we have
\eqa
  \lefteqn{ \E_{\bX,\bX'} \bigl\{e^{\trh (t\wedge \sdd_1)} H(\wX^N_1(t\wedge \sdd_1), \wX^N_2(t\wedge \sdd_1))\bigr\} }
             \non\\
  && \Le  \adbm{2\tnkt\bigl(1 + \{e^{1/N}(2\nu + 1)\}^{\trh/\r}\bigr)} \label{ADBi-12} \\
   &&\mbox{}\qquad  + \{e^{1/N}(2\nu + 1)\}^{\trh/\r}\Psi(T) (1 - \hktt).\non
\ena
}

\adbi{On the other hand, using similar arguments, for
$$
   (\bX,\bX')\ \in\ \cX^N_- \Def   \{(\bZ,\bZ') \in \adbi{\tcX^N(\d_1)}\colon\,H(\bZ,\bZ') \le K_3\},
$$
we write
\eqa
   \lefteqn{\E_{\bX,\bX'}\{ e^{\trh (t\wedge \sdd_1)}H(\wX^N_1(t\wedge \sdd_1), \wX^N_2(t\wedge \sdd_1))\} } \non\\
    &&\Eq \E_{\bX,\bX'}\{ e^{\trh (t\wedge \sdd_1)}H(\wX^N_1(t\wedge \sdd_1), \wX^N_2(t\wedge \sdd_1)) \non\\
     && \quad\times (I[t < \t_2 < \sdd_1)] + I[\t_2 \le t] I[\t_2 < \sdd_1] + I[\sdd_1 \le \t_2])\}.
\ena
For the first term, we immediately have
\eq \label{ADBi-8}
   \E_{\bX,\bX'}\{ e^{\trh (t\wedge \sdd_1)}H(\wX^N_1(t\wedge \sdd_1), \wX^N_2(t\wedge \sdd_1))
             I[t < \t_2 < \sdd_1)]\}  
    \Le e^{\trh/(\r N)}\nkt. 
\en
Then, using the strong Markov property much as for~\Ref{ADBi-5}, it follows that
\eqa
\lefteqn{ \E_{\bX,\bX'}\{ e^{\trh (t\wedge \sdd_1)}H(\wX^N_1(t\wedge \sdd_1), \wX^N_2(t\wedge \sdd_1))
             I[\t_2 \le t]I[\t_2 < \sdd_1)]\} } \non\\
    &&\Le e^{\trh/(\r N)}\Psi(T)(1 - \hktt), \phantom{XXXXXXXXXXXXXX} \label{ADBi-9}
\ena
for $0 \le t \le T$, and it is immediate that
\eq\label{ADBi-10}
 \E_{\bX,\bX'}\{ e^{\trh (t\wedge \sdd_1)}H(\wX^N_1(t\wedge \sdd_1), \wX^N_2(t\wedge \sdd_1))
             I[\sdd_1 \le \t_2] \} 
     \Le  e^{\trh/(\r N)} \adbm{\tnkt}. 
\en
Thus, for $(\bX,\bX')\ \in\ \cX^N_- $, 
we have
\eqa
   \lefteqn{\E_{\bX,\bX'}\{ e^{\trh (t\wedge \sdd_1)}H(\wX^N_1(t\wedge \sdd_1), \wX^N_2(t\wedge \sdd_1))\} } \non\\
   &&\Le \adbm{2\tnkt} e^{\trh/(\r N)} + e^{\trh/(\r N)} \Psi(T) (1 - \hktt).  \label{ADBi-11}
\ena
}

\adbi{Comparing~\Ref{ADBi-11} with~\Ref{ADBi-12}, it follows that, 
for $0 < \trh \le \r$  and for $0 \le t \le T$,
\eq\label{ADBi-Psi-eq}
  \Psi(T) \Le
     \{e^{1/N}(2\nu + 1)\}^{\trh/\r}\Psi(T) (1 - \hktt)  + C_{\Ref{ADBi-C-def}},
\en
where
\eq\label{ADBi-C-def}
      C_{\Ref{ADBi-C-def}} \Def \adbm{2\tnkt\bigl(1  +  \{e(2\nu+1)\}^{\trh/\r}\bigr)}.
\en
\ignore{
     &&\Le  \nu K_3 e^{\trh/N} + (1-\h)\E_{\bX,\bX'}\{ e^{\trh \t_2}\}\Psi(T)
               + 2N\d_1 e^{\trh T} p_N(\d_1,T_N) \non\\
     &&\Le  (1-\h) e^{\trh/N} \Psi(T) + 2N\d_1 e^{\trh T} p_N(\d_1,T_N).  \label{ADB-small-distance}
\ena
Combining \Ref{ADB-big-tau} -- \Ref{ADB-small-distance} shows that there exists $N_0 \ge 1$ such that,
for $0 < \trh \le \r$ and $N \ge N_0$, and for $0 \le t \le T \le T_N$,
\[
   \Psi(T) \Le K_3\{\tilde{\nu} + (2 \tilde{\nu})^{\trh/\r} e^{\trh/N}\} + (1-\h)\Psi(T)\, (2\tilde{\nu})^{\trh/\r} e^{\trh/N}
        + 4N\d_1 e^{\trh T} p_N(\d_1,T_N),
\]
\adbc{with~$\tilde{\nu}$ as in~\Ref{ADB-tilde-nu-def}.}
}
So, 
choosing~$\trh$ to satisfy
\eq\label{ADBi-tr-def}
    \{e(2\nu + 1)\}^{\trh/\r}(1-\hktt) \Eq 1-\hktt/2,
\en
it follows, for 
any $T > 0$, that
\eq\label{Psi-bnd}
   \Psi(T) \Le 2C_{\Ref{ADBi-C-def}}/\hktt. 
\en
}

\adbi{From the definition~\Ref{ADB-key-distance-bnd} of~$\Psi(T)$, and because, for $\bX' \in N(\bX)$,
we have $H(\bX,\bX') \le K_3$,
it thus follows, for all $\bX,\bX' \in \cX^N(\d_1)$ with $\bX' \in N(\bX)$, 
that
\eq\label{ADB-contract-MPP-basic}
    \E_{\bX,\bX'}\{e^{\trh(t \wedge \sdd_1)} H(\wX^N_1(t \wedge \sdd_1), \wX^N_2(t \wedge \sdd_1))\}
              \Le 2C_{\Ref{ADBi-C-def}}/\hktt, 
\en
for $\trh$ chosen to satisfy~\Ref{ADBi-tr-def}, and for $C_{\Ref{ADBi-C-def}}$ as defined in~\Ref{ADBi-C-def}.
Hence it follows that
\[
   e^{\trh t}\E_{\bX,\bX'}\{ H(\wX^N_1(t), \wX^N_2(t))I[t \le \sdd_1]\}
              \Le 2C_{\Ref{ADBi-C-def}}/\hktt,
\]
and thus that, for all $\bX,\bX' \in \cX^N(\d_1)$ with $\bX' \in N(\bX)$, 
\[
    \E_{\bX,\bX'}\{ H(\wX^N_1(t), \wX^N_2(t)) \} \Le (2C_{\Ref{ADBi-C-def}}/\hktt)e^{-\trh t} 
                  + \adbm{2N\d_1}\pr_{\bX,\bX'}[\sdd_1 < t].
\]
By \Ref{ADB-extremes-bnd} and~\Ref{ADB-alpha-def0}, for any $\bX \in \cX^N(\d_1/2)$ and $\bX' \in N(\bX)$,
\[
    \adbm{N}\pr_{\bX,\bX'}[\sdd_1 < a_2 N/(2\r)] \Le a_1 a_2 \adbm{N^2} e^{-a_2 N} \Le a_1 a_2 e^{-\r' a_2 N/(2\r)}
\]
for any $\r' \le \r$, if \adbm{$N \ge N_2$} is large enough that also $\adbm{N^2} e^{-a_2 N/2} \le 1$.  
Hence, for such~$N$, it follows that
\[
     \E_{\bX,\bX'}\{ H(\wX^N_1(t), \wX^N_2(t)) \} \Le De^{-\trh t}, \qquad 0 \le t \le \a N,
\]
with
\eq\label{ADB-alpha-def1}
  D \Eq (2C_{\Ref{ADBi-C-def}}/\hktt) + 2\d_1 a_1 a_2 \quad\mbox{and}\quad \a \Eq a_2/(2\r).
\en
This proves the lemma.}
\end{proof}

We now use Lemma~\ref{ADB-coupling-lemma} and Theorem~\ref{conc-thm}
to prove that, if $f\colon S_N \to \R$ is an $\|\cdot \|_M$-Lipschitz function,
then $f(\bX^N(t))$ is concentrated about \adbm{$\ex_\bX f(\bX^{N,\d_1}(t))$} for a long time,
\adbm{if~$\bX$ is close enough to~$N\bc$}.

\begin{theorem}\label{ADB-conc-MPP}
Under Assumptions 1--3, \adbm{with~$\d_0$ as in Theorem~\ref{thm:main1} and~$\d_1$ as specified in Remark~\ref{ADB-d_1-def}, and
with~$\a$ as in \Ref{ADB-alpha-def1},}
there exist positive constants $D,\trh,r^*$ and~$N_1$ such that,
for any $\bX\in\cX^N(\d_1/4)$, $0 < t \le \a N$ and $m\ge0$, and for any
$\|\cdot\|_M$-Lipschitz function $f\colon\cX^N(\d_0) \to \R$ with constant~$L$,
\eqs
    \lefteqn{\Pr_{\bX} \Bigl( \bigl\{\bigl| f(\bX^N(t))-\E_{\bX} [f(\bX^{\adbj{N,\d_1}}(t))] \bigr|\ge m \bigr\}\Bigr) }\\
     &&\Le 2\exp\left( - \frac {m^2}{\adbj{Nr^* L^2 D^2/\trh  + 2LD m/3}}\right)
             + \lceil  \r\a N \rceil \hz_N(\d_1/\adbk{8}),
\ens
as long as $N \ge N_1$, \adbd{with~$\hz_N(\cdot)$ as given in~\Ref{ADB-hz-bnd}.} 

In particular, there is a constant $v = v(\trh) > 0$ such that, for $N \ge N_1$ and for $\bX \in \cX^N(\d_1/4)$,
\eq\label{ADB-epidemic-var}
    \Var_{\bX} f(\bX\adbj{\undi}(t)) \Le N v L^2, \quad 0 < t \le \a N,
\en
\adbm{and hence also
\eq\label{ADB-ev2}
     \ex_\bX \|\bX\undi(t) - \ex\{\bX\undi(t)\}\|_M \Le d \sqrt{Nv}, \quad 0 < t \le \a N.
\en
}
\end{theorem}

\begin{proof}
We take $S := \adbm{S_N}$ and $S_0 := \cX^N(\d_1/2)$ in Theorem~\ref{conc-thm},
and define $q_0 := Nr^*$, where
$$
    r^*  \Def \adbm{R^*(B_M({\bf c},\d_1/2))} \Eq \sup_{\by' \in \adbm{B_M({\bf c},\d_1/2)}}\adbj{\sum_{\bJ\in\cJ}}r_{\bJ}(\by') .
$$
\adbi{Note that, on the event $\{\bX^N(s) \in \cX^N(\d_1/2),\,0\le s\le t\}$,
we have $\bX^N(s) = \bX^{N,\d_1}(s)$, $0 \le s \le t$.}
Hence, for~$f$ as in the statement of the theorem,
we can deduce from Lemma~\ref{ADB-coupling-lemma} and Theorem~\ref{conc-thm}
that, for  $D,\a$ and~$\trh$  as in Lemma~\ref{ADB-coupling-lemma}, and
for any $\bX\in\cX^N(\d_1/4)$, $0 < t \le \a N$ and $m\ge0$, we have
\eqa
  \lefteqn{\Pr_{\bX} \Bigl( \bigl\{\bigl| f(\bX^N(t))-\E_{\bX} [f(\bX^{\adbj{N,\d_1}}(t))] \bigr|\ge m \bigr\}
       \cap \{\bX^N(s) \in \cX^N(\d_1/2),\,0\le s\le t\} \Bigr)   }\non\\
    &&\Le 2\exp\left( - \frac {m^2}{\adbj{(Nr^* L^2 D^2/\trh)  + 2LD m/3}}\right),
        \phantom{XXXXXXXXXXXXXX}\label{ADB-epidemic-conc-1}
\ena
if $N \ge N_0$, \adbm{where~$N_0$ is as in Lemma~\ref{ADB-coupling-lemma}}. 

Then, there exists $N'_0$ such that, for $\bX \in \cX^N(\d_1/4)$,
$$
   \Pr_{\bX}\adbj{\bigl[\{\bX^N(s) \in \cX^N(\d_1/2),\,0\le s\le t\}^c\bigr]} \Le \lceil  \a N \rceil \hz_N(\d_1/\adbk{8})
$$
for all $0 < t \le \a N$ and $N \ge N'_0$, by Proposition~\ref{ADB-3.5}.
The first statement of the theorem now follows, with $N_1 = \max \{N_0,N'_0\}$.

The second follows from the bound by calculation, since, for any random variable~$Z$, 
$\ex Z^2 = 2\int_0^\infty z\pr[|Z| > z]\,dz$.  \adbm{Then, since, for $\tbe\uii$, $1\le i\le d$, as 
defined in the proof of Lemma~\ref{ADB-exp-bound-MG}, we have
\[
    \by \Eq \sum_{i=1}^d \lan \by, \tbe\uii \ran_M \,\tbe\uii,
\]
and since the functions $f_i(\by) := \lan \by, \tbe\uii \ran_M$ are $M$-Lipschitz with constant~$1$, it follows easily that,
for $0 < t \le N\a$,
\eqs
    \ex_\bX\| \bX\undi(t) - \ex\{\bX\undi(t)\} \|_M &\le& \sum_{i=1}^d \ex_\bX| f_i(\bX\undi(t)) - \ex\{f_i(\bX\undi(t))\} |\\
             &\le& d \sqrt{Nv},
\ens
with the last inequality following from~\Ref{ADB-epidemic-var}.
}
\end{proof}

It is shown in the next section that, \adbj{for $\bX^N(0)$ close enough to~$N\bc$, 
the distribution of~$\bX^N\adbj{(t)}$ for a long time} remains close to that of
the equilibrium distribution~$\pi\und$ of~$\adbm{\bX^{N,\d}}$, for appropriate choice of~$\d$.
We now show~$\pi\und$ to be well concentrated, establishing the second part of Theorem~\ref{thm:main}.
Recall the expression for~$\hz_N(z)$ \adbj{given in}~\Ref{ADB-hz-bnd}.

\begin{theorem}\label{ADB-equilibrium}
 For any $0 < z < \d \le \adbm{\d_1}$, 
\[
   q_N(z,\d) \Def \pi\und\{B^c_M(N\bc,Nz)\} \Le   \adbk{3\hz_{N}\bigl(z(1 - e^{-1/2})/2\bigr)}.
\]
\end{theorem}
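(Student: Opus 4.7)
The plan is to use stationarity of $\pi^{N,\delta}$ together with the contractivity of the ODE (Theorem~\ref{thm:main1}(i)) and the concentration result of Lemma~\ref{ADB-exp-bound-XN}. Since $\pi^{N,\delta}$ is invariant under the semigroup of $\bX^{N,\delta}$, we have, for any $t^* > 0$,
\[
  q_N(z,\delta) \Eq \int_{\cX_N(\delta)} \Pr_\bX\bigl[\bX^{N,\delta}(t^*) \in B^c_M(N\bc, Nz)\bigr] \, d\pi^{N,\delta}(\bX).
\]
I would choose $t^* := (2\rho)^{-1}$, so that by Theorem~\ref{thm:main1}(i) the ODE solution starting at any $\bx \in B_M(\bc, \delta_0)$ satisfies $\|\by_{[\bx]}(t^*) - \bc\|_M \le e^{-1/2} \|\bx - \bc\|_M$. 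The factor $(1 - e^{-1/2})$ in the theorem bound is exactly the contraction gap produced by this choice.

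The key one-step estimate is as follows. Set $z_1 := z(1 + e^{1/2})/2$, chosen so that $z_1 e^{-1/2} + z(1 - e^{-1/2})/2 = z$. For any $\bX$ with $\bx := N^{-1}\bX \in B_M(\bc, z_1)$, the ODE satisfies $\|\by_{[\bx]}(t^*) - \bc\|_M \le z_1 e^{-1/2}$, so the event $\{\|\bx^N(t^*) - \bc\|_M > z\}$ is contained in $\{\|\bx^N(t^*) - \by_{[\bx]}(t^*)\|_M > z(1 - e^{-1/2})/2\}$. Translating this to Euclidean norm via~\eqref{ADB-CM-def} and applying Lemma~\ref{ADB-exp-bound-XN} (whose conclusion transfers to $\bX^{N,\delta}$ because the two processes agree until $\bX^N$ first leaves $\cX_N(\delta)$, and the event in question is contained in the event of non-coincidence), this is bounded by $\hat\zeta_N(z(1-e^{-1/2})/(2c_M))$.

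Splitting the stationary integral according to whether $\bX \in B_M(N\bc, N z_1)$ then yields
\[
  q_N(z,\delta) \Le \hat\zeta_N(u_0) + q_N(z_1,\delta), \qquad u_0 := z(1-e^{-1/2})/(2c_M).
\]
Iterating with $z_k := \alpha_0^k z$, $\alpha_0 := (1 + e^{1/2})/2 > 1$, the iteration terminates at the first $K$ for which $z_K \ge \delta$, where $q_N(z_K,\delta) = 0$ as $\pi^{N,\delta}$ is supported on $\cX_N(\delta)$. This gives
\[
  q_N(z,\delta) \Le \sum_{k=0}^{K-1} \hat\zeta_N(\alpha_0^k u_0).
\]

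The final step is to bound this geometric sum by $2\hat\zeta_N(u_0)$. Using the explicit form~\eqref{ADB-hz-bnd}, namely $\hat\zeta_N(u) = 2dN \exp\{-N k_1 u \min(1, k_2 u)\}$, each ratio $\hat\zeta_N(\alpha_0^k u_0)/\hat\zeta_N(u_0)$ decays super-geometrically in $k$ (as $\exp\{-N k_1 k_2 u_0^2(\alpha_0^{2k}-1)\}$ in the quadratic regime), so $\sum_{k \ge 1} \hat\zeta_N(\alpha_0^k u_0) \le \hat\zeta_N(u_0)$ for $N$ large enough. For small $N$, the bound $2\hat\zeta_N(u_0) \ge 4dN e^{-\mathrm{const}}$ exceeds $1$ and the claim is trivial. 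The main obstacle I anticipate is verifying the Lemma~\ref{ADB-exp-bound-XN} hypotheses uniformly across iteration steps (ensuring the $\eps$-neighbourhood of each ODE trajectory stays inside the compact set $B_M(\bc,\delta_0)$, which requires $\delta_1$ chosen small enough relative to $\delta_0$ so that $\alpha_0 \delta_1 + \eps_M < \delta_0$), and packaging the geometric-series absorption cleanly so the factor of~$2$ works across the full range $0 < z < \delta \le \delta_1$.
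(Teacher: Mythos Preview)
Your proposal is correct and follows essentially the same route as the paper's proof: use stationarity of~$\pi^{N,\delta}$ to write $q_N(z,\delta)$ as an integral of transition probabilities, split according to distance from~$N\bc$, apply ODE contractivity (Theorem~\ref{thm:main1}(i)) together with Lemma~\ref{ADB-exp-bound-XN} on the inner piece, iterate the resulting recursion, and sum the geometric series in~$\hat\zeta_N$. The paper uses time step $\rho^{-1}$ (contraction factor $e^{-1}$) and iteration ratio $e^{1/2}$, whereas you use $(2\rho)^{-1}$ and ratio $(1+e^{1/2})/2$; both choices produce the same leading term $\hat\zeta_N\bigl(z(1-e^{-1/2})/(2c_M)\bigr)$, and the paper handles the trivial case (when the bound exceeds~$1$) in exactly the way you indicate. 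Your flagged obstacle about the $\epsilon$-tube staying inside $B_M(\bc,\delta_0)$ is not serious: since~$\pi^{N,\delta}$ is supported on $\cX_N(\delta)\subset B_M(N\bc,N\delta_1)$, the starting points in every iteration step lie in $B_M(\bc,\delta)$, and the tube radius is at most $z_k(1-e^{-1/2})/2 < \delta/2$, so the tube remains well inside $B_M(\bc,\delta_0)$ once $\delta_1$ is taken (as it may be) modestly smaller than~$\delta_0$.
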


\begin{proof}
 Since~$\pi_{N,\d}$ is the equilibrium distribution of $\bX^{N,\d}$, it follows that
 \eqs
     q_N(z,\d) &:=& 
     \int_{B_M(N\bc,N\d)}
                 \pr_{\bY'}[\bX^{N,\d}(\r^{-1}) \notin B_M(N\bc,Nz)]\,\adbj{\pi^{N,\d}}(d\bY')\\
          &\le& \pi_{N,\d}\{B^c_M(N\bc,N\adbm{(ze^{1/2}\wedge\d_1)})\}\\
          &&\mbox{}\  + \int_{B_M(N\bc,N\adbm{(ze^{1/2}\wedge\d_1)})}
                   \pr_{\bY'}[\bX^{N,\d}(\r^{-1}) \notin B_M(N\bc,Nz)]\,\pi\adbj{^{N,\d}}(d\bY').
 \ens
Now, 
for $\bY' \in B_M(N\bc,N\adbm{(ze^{1/2}\wedge\d_1)})$, it follows from Theorem~\ref{thm:main1} (i) 
that the solution~$\by$ to the ODE system~\Ref{eq:diff-eq}
starting in $\by' := N^{-1}\bY'$ satisfies \hbox{$\|\by(\r^{-1}) - \bc\|_M \le ze^{-1/2}$.} Hence, 
taking $\e''(z) := z(1 - e^{-1/2})/\adbk{2}$, so that
$\|\by(\r^{-1}) + \adbj{\bw} - \bc\|_M < z$ for any~$\adbj{\bw}$ with $\|\adbj{\bw}\adbk{\|_M} \le \e''(z)$, it follows from
Lemma~\ref{ADB-exp-bound-XN} that
\[
   \pr_{\bY'}[\bX^{N,\d}(\r^{-1}) \notin B_M(N\bc,Nz)] \Le \hz_N(\e''(z)),\quad \bY' \in B_M(N\bc,N\adbm{(ze^{1/2}\wedge\d_1)}).
\]
Thus $q_N(z,\d) \le q_N(\adbm{(ze^{1/2}\wedge\d_1)},\d) + \hz_N(\e''(z))$;
\adbm{note also that $q_N(\d_1,\d) = 0$, by the definition of~$q_N$, because $\d \le \d_1$}.  
Iterating the inequality then gives
\[
     q_N(z,\d) \Le \sum_{r \ge 0} \hz_N(\e''(ze^{r/2})), \quad 0 < z < \d .
\]

Now, \adbm{from~\Ref{ADB-hz-bnd}, $\hz_N(\e''(x)) = 2d \exp\{-c_a N x\min\{c_b x,1\}\}$}, for suitable constants $c_a$ and~$c_b$.
\adbm{Since, for $a \ge 1$ and $x > 0$, we have $(ax\wedge 1) \ge (x\wedge 1)$, it follows that
\[
    q_N(z,\d) \Le 2d\sum_{r \ge 0} \bigl(\exp\{-c_a N \e''(z)\min\{c_b \e''(z),1\} \bigr)^{e^{r/2}}, \quad 0 < z < \d .
\]
Then, for $0 < u \le 1/3$, by comparison with a geometric sum,
\[
     \sum_{j \ge 0} u^{e^{j/2}} \Le u + \frac u{1-u} \ <\ 3u.
\]
Hence, if $\hz_{N}(\e''(z)) \le 2d/3$,
it follows that $q_N(z,\d) \le 3 \hz_N(\e''(z))$, 
and this inequality is trivially true otherwise.  In view of the definition of $\e''(z)$,}
this establishes the theorem.
\end{proof}


\adbm{It follows from Theorem~\ref{ADB-equilibrium} and~\Ref{ADB-hz-bnd} that}
\[
    q_N(m/N,\d) \adbj{\Le} \adbm{6}d \exp\{-k_1 m \min(1,k_2m/N)\},
\]
for suitable constants $k_1$ and~$k_2$, and the inequality $\min(1,x) \ge x/(1+x)$ in $x \ge 0$ then yields
a bound in the form given in~\Ref{ADB-conc-1.2}.
Note \adbe{also} that Theorem~\ref{ADB-equilibrium} implies that there exists a constant $v_{\infty}$ such that,
\adbe{ if $f\colon S_N \to \R$ is a $\|\cdot \|_M$-Lipschitz function with constant~$L$, then}
\begin{equation}\label{equilibrium-var}
\Var_{\pi\und} f(\grbu{\bX}^N(\infty)) \Le N v_{\infty} L^2,
\end{equation}
uniformly for all $0 < \d \le \adbm{\d_1}$.
Alternatively, \adbe{the bound}~\eqref{equilibrium-var} can be deduced from Barbour, Luczak, Xia (2018a, Lemma 5.1).

\adbe{
In the proof of Theorem~\ref{lower-bound}, we need the following result, comparing the distributions of two
realizations of~$\bX^N$ that start close to one another, after evolving for  a fixed length of time.
It can be deduced by combining
 \BLX~(2018a, Theorem~3.1 and Remark~6.4) and Assumption~2; for the sake of completeness, we give
a separate proof here.  For $\bX_1,\bX_2 \in S_N$, we define $n_0(\bX_1,\bX_2)$ to be the smallest $n_0 \in \Z_+$
such that 
\adbf{$\bX_2 = \bX_1 + \sum_{i=1}^{n_0} \bJ'_i$, where $\bJ'_i \in \{\bJ_i,-\bJ_i\}$ for some $\bJ_i \in \cJ$,}
$1 \le i \le n_0$; 
it is shown in Lemma~\ref{intvec} that, for a suitable constant~$\mu$,
\eq\label{ADB-n0-bnd}
    n_0(\bX_1,\bX_2) \Le \mu \|\bX_1-\bX_2\|_M.
\en
We define
\eq\label{ADB-rate-shift}
   r_1 \Eq r_1(\d_1) \Def \adbm{\max_{\bJ'\in\cJ}|\bJ'|} \adbj{\sup_{\by \in B_M(\bc,\d_1)}\sJJ} |\nabla r_{\bJ}({\by})|,
\en
noting that, if $\bJ_0 \in \cJ$ and both $\by$ and $\by + N^{-1}\bJ_0$ belong to~$B_M(\bc,\d_1)$, then
\eq\label{ADB-rate-shift-2}
   \adbf{\sJJ} N|r_\bJ(\by + N^{-1}\bJ_0) - r_\bJ(\by)| \Le r_1.
\en
}

\adbe{
\begin{proposition}\label{ADB-close-TV}
Let $\bX^N_1$ and $\bX^N_2$ be two copies of~$\bX^N$, 
with $\bX^N_l(0) \in B_M(N\bc,\adbk{N\d_1}/2)$, $l=1,2$. 
Then there exist $k_*,u_* > 0$ such that
\eq\label{ADB-full-term-1}
     \dtv\bigl(\law(\bX^N_1(u_*)),\law(\bX^N_2(u_*))\bigr)
       \Le k_* N^{-1/2}\|\bX^N_1(0) - \bX^N_2(0)\|_{\adbj{M}}.
\en
\end{proposition}
}

\begin{proof}
 We  begin by establishing~\Ref{ADB-full-term-1} for copies of a modified process~$\hbX^N$ that has
 the same set of possible jumps as~$\bX^N$, but has transition rates~$N\hr_{\bJ}(\cdot)$
that only differ from those of~$\bX^N$ far from~$N\bc$.  
We define
 \[
    \hr_{\bJ}(\bc + \by) \Def \begin{cases}
                         r_{\bJ}(\bc + \by) &\ \mbox{if } \|\by\adbk{\|_M \le \d_1};\\
                         r_{\bJ}(\bc + \adbm{\d_1}\by/|\by|) &\ \mbox{if } \|\by\adbk{\|_M > \d_1}.
                        \end{cases}
\]
With this definition, 
\eq\label{ADB-rate-shift-3}
    \sJJ N|\adbj{\hr_\bJ}(\by + N^{-1}\bJ_0) - \adbj{\hr_\bJ}(\by)| \Le r_1 \quad
            \mbox{and}\quad \min_{\bJ \in \cJ} \adbj{\hr_\bJ}(\by) \ \ge\ r_0,
\en
now for {\it all\/} $\by \in \hS$.

\adbe{
Take two copies $\hbX_1^N$ and~$\hbX_2^N$ with $\hbX^N_1(0) = \hbX^N_2(0) + \bJ_0$ for some $\bJ_0\in \cJ$,
and write $\hbx^N_l(t) := N^{-1}\hbX^N_l(t)$, $l=1,2$.
\adbf{Couple} $\hbX_1^N$ and~$\hbX_2^N$ over an interval $[0,t_0]$,  by matching jumps as far as
possible for all jump vectors
$\bJ \in \cJ \setminus \{\bJ_0\}$, with a more careful treatment of jumps of~$\bJ_0$.
Think of the two copies as each making ``green'' jumps by each~$\bJ$ at rate
$N\bigl(r_\bJ(\hbx^N_1(t))\wedge r_\bJ(\hbx^N_2(t))\bigr)$, and ``red'' jumps by~$\bJ$
at rate $N\bigl((r_\bJ(\hbx^N_1(t))-r_\bJ(\hbx^N_2(t))) \vee 0\bigr)$ for~$\hbX^N_1$ and
at rate $N\bigl((r_\bJ(\hbx^N_2(t))-r_\bJ(\hbx^N_1(t))) \vee 0\bigr)$ for $\hbX^N_2$;
the {\it green\/} jumps are coupled so as to occur  together at the given rate.
For the particular jump~$\bJ_0$, we break the rates up further by regarding the copies as taking
``blue'' jumps by~$\bJ_0$ at rate~$Nr_0$, and ``green'' jumps by~$\bJ_0$ only at rate
$N\bigl\{\min (r_{\bJ_0}(\hbX^N_1(t)),r_{\bJ_0}(\hbx^N_2(t))) - r_0\bigr\}$, with red jumps by~$\bJ_0$ as above.
}

\adbe{
We start by generating the blue jumps by~$\bJ_0$, so that, as far as possible, the copy~$\hbX^N_2$ makes exactly one more blue jump
than~$\hbX^N_1$ in the interval $[0,t]$.  If this is the case, and if there are no red jumps during this interval, then
$\hbX^N_1(t) = \hbX^N_2(t)$.  Note that, in either process, the blue
jumps  form a Poisson process with constant rate $Nr_0$, irrespective of the states of the processes,
so that the number in the interval $[0,t]$ has a Poisson distribution with mean~$Nr_0t$.
Now the Poisson distribution~$\Po(\la)$ with mean~$\la$ is unimodal, so that, if $Z \sim \Po(\la)$, then
\eq\label{ADB-Poisson-diff}
     \dtv(\law(Z),\law(Z+1)) \Eq \max_{j \in \Z_+} \pr[Z = j] \Le (2e\la)^{-1/2};
\en
see, for example, \adbh{Barbour \& Jensen~(1989), Remark on p.78}.
Accordingly, taking $\la = Nr_0t$, we generate the blue jumps in the two processes
by first choosing their numbers $(Z_1(t),Z_2(t))$ \adbf{so that $Z_2(t) = Z_1(t)+1$},
except on an event whose probability is bounded using~\Ref{ADB-Poisson-diff}, and
then choosing the times of the blue jumps within the time interval $[0,t]$.  On the event
$\{(Z_1(t),Z_2(t)) = (\ell, \ell+1)\}$, we take $\ell+1$ independent
uniform random variables $(B, B_1, \dots, B_\ell)$ over $[0,t]$, place the blue jumps of $\hbX^N_1$ at $B_1, \dots, B_\ell$,
and the blue jumps of $\hbX^N_2$
at $B, B_1, \dots, B_\ell$.  The complement of the event $\cZ_t := \{Z_2(t) = Z_1(t) + 1\}$ has probability
at most $(2eNr_0 t)^{-1/2}$,
and we deem the coupling to have failed if it occurs: we then complete the coupling in any way that realizes
the corresponding marginal
conditional distributions of $\hbX^N_1$ and~$\hbX^N_2$, and do not consider the case any further.
}

\adbe{
Next, we condition on the times of the blue jumps, and generate the green jumps of the two processes --- which always coincide ---
as well as the red jumps of the two processes separately, over the interval $[0,t]$, \adbm{by means of the Doob--Gillespie
algorithm. The green and red rates specified
above are used, together with independent sequences of independent uniform $U[0,1]$ and standard exponential random variables, to
determine the sequence of jumps and their times, respectively.} 
Let~$R(t)$ denote the total number of red jumps in the interval $[0,t]$.
As observed above, on the event $\cZ_t \cap \{R(t)=0\}$, it follows that $\hbX^N_1(t) = \hbX^N_2(t)$.
Now, if $R(s) = \ell$, the rate associated with the occurrence of red jumps at time~$s$ is at most \adbj{$(\ell + 1)r_1$}, in view
of~\Ref{ADB-rate-shift-3}, since then
\eq\label{ADB-R(s)-bnd}
   n_0(\hbX^N_1(s),\hbX^N_2(s)) \Le  R(s) + I[B > s] \Le \ell + 1.
\en
\adbj{In particular,} $\pr[R(t) \ge 1] \le 1 - e^{-r_1 t} \le r_1 t$, implying that
\eqa
    &&\dtv\bigl(\law(\hbX^N_1(t)),\law(\hbX^N_2(t))\bigr) \Le \pr[\hbX^N_1(t) \ne \hbX^N_2(t)]
             \phantom{XXXXXXXXX}\non \\
      && \qquad \Le  \pr\bigl[(\cZ_t \cap \{R(t)=0\})^c\bigr] \Le (2eNr_0 t)^{-1/2} + r_1 t. \label{ADB-bnd-1}
\ena
The bound~\Ref{ADB-bnd-1} decreases as~$t$ increases until \adbm{$r_0 t$ and~$r_1 t$ are} of magnitude comparable to~$N^{-1/3}$, 
in which case it is
of order~$O(N^{-1/3})$.  To obtain smaller bounds for larger values of~$t$,  the consequences of having $R(s) \ge 1$ need
to be quantified.
}

\adbe{
Now define
\[
    \D^N(t) \Def \sup_{\bX \in S_N} \mJJ \dtv\bigl(\law(\hbX^N(t) \giv \hbX^N(0) = \bX),
            \law(\hbX^N(t) \giv \hbX^N(0) = \bX + \bJ)\bigr).
\]
Then, for $s \in (0,t)$, much as for~\Ref{ADB-bnd-1}, we have
\eqa
  \lefteqn{\dtv\bigl(\law(\hbX^N_1(t)),\law(\hbX^N_2(t))\bigr)}   \non\\
     &\le& \pr[\cZ_s^c]   + \ex\Bigl\{I[\cZ_s]\dtv\bigl(\law(\hbX^N(t-s) \giv \hbX^N(0) = \hbX^N_1(s)),
              \label{ADB-recursion-1}\\
      &&\hskip1.6in      \law(\hbX^N(t-s) \giv \hbX^N(0) = \hbX^N_2(s))\bigr)\Bigr\}. \phantom{XX}\non
\ena
Note that $n_0(\hbX^N_1(s),\hbX^N_2(s)) \le R(s)$  on the event~$\cZ_s$, so that then, \adbm{for some $0 \le K(s) \le R(s)$,}
\eq\label{ADB-difference-at-s}
     \hbX^N_1(s) \Eq \hbX^N_2(s) + \adbm{\bT_{\cJ}(K(s))}, \quad\mbox{where}\quad \adbm{\bT_{\cJ}(k) \Def \sum_{j=1}^{k} \bJ'_j},
\en
and where either $\{\bJ'_j,-\bJ'_j\} \cap \cJ \ne \emptyset$: \adbm{since not every red jump need 
increase $n_0(\hbX^N_1(s),\hbX^N_2(s))$, it may be that $K(s) < R(s)$.}
Thus, using~\Ref{ADB-difference-at-s}, and by the triangle inequality for total variation distance,
we have
\eqa
  \lefteqn{\dtv\bigl(\law(\hbX^N_1(t) \giv \hbX^N_1(s)),\law(\hbX^N_2(t)  \giv \hbX^N_2(s))\bigr) I[\cZ_s]} \non\\
    &&\Le \adbm{\sum_{j=1}^{K(s)}} \dtv\bigl(\law(\hbX^N(t-s) \giv \hbX^N(0) = \hbX^N_2(s) + \bT_{\cJ}(j)), \non\\
     &&\qquad\qquad\qquad\qquad   \law(\hbX^N(t-s) \giv \hbX^N(0) = \hbX^N_2(s) + \bT_{\cJ}(j-1))\bigr) \non\\
    &&\Le R(s) \D^N(t-s).
\ena
Now, again using the maximal rate of occurrence of red jumps, we have 
\[
     \frac d{du}\, \ex\{R(u)\} \Le r_1\adbj{\ex\{R(u) + 1\}},\qquad u > 0,
\]
and hence $\ex\{R(s)\} \le e^{r_1 s} - 1 \le 2r_1 s$ in $s \le 1/r_1$.  In consequence, from~\Ref{ADB-recursion-1},
for any $0 < s < t \le 1/r_1$,
\eq\label{ADB-recursion-2}
    \D^N(t) \Le (2eNr_0 s)^{-1/2} + 2r_1 s \D^N(t-s).
\en
}

\adbe{
It remains to exploit the recursion~\Ref{ADB-recursion-2}, together with the initial bound
$\D^N(s) \le (2eNr_0 s)^{-1/2} + r_1 s$, from~\Ref{ADB-bnd-1}.  One way is as follows.  Fix
$t := t_0 := 1/(8r_1)$, so that, for $0 < s < t$, we have $2r_1 s < 1/4$, and consider times $t_k := 4^{-k}t$
for $0 \le k \le k_N$, where $4^{-k_N}r_1 t \ge N^{-1/3} > 4^{-k_N-1}r_1 t$.  Then~\Ref{ADB-recursion-2}
with $t = t_k$ and $s = t_k - t_{k+1} = 3t_{k+1}$ implies that
\[
    \D^N(t_k) \Le \frac1{\sqrt{6eNr_0 t_{k+1}}} + \frac{\D^N(t_{k+1})}4\,, \quad 0 \le k \le k_N-1.
\]
Iterating the recursion, for $t = 1/(8r_1)$, gives
\eqs
    \D^N(t) &\le& \sum_{k=0}^{k_N-1} \frac{4^{-k}}{\sqrt{6eNr_0 t_{k+1}}} + 4^{-k_N}\D^N(4^{-k_N}t) \\
    &\le& \sum_{k=0}^{k_N-1} \frac{2^{-k+1}}{\sqrt{6eNr_0 t}}
          + 4^{-k_N}\Bigl\{\frac{2^{k_N}}{\sqrt{2eNr_0t}} + 4^{-k_N}r_1 t\Bigr\}\\
    &\le& \frac{4}{\sqrt{6eNr_0 t}} + \frac{4\sqrt 2 N^{-1/6}}{\sqrt{2eNr_0t}} + 128 N^{-2/3} \Le k'_* N^{-1/2},
\ens
say, where $k'_* = k'_*(r_0/r_1)$.  This, together with~\Ref{ADB-n0-bnd} amd the triangle inequality for
total variation distance,
gives the statement of the theorem for the process~$\hbX^N$, with $k_* = k'_* \mu$ and $u_* = t = 1/(8r_1)$.
}

\adbe{
Now, by Proposition~\ref{ADB-3.5}, the process~$\bX^N$ starting from $\bX^N(0) \in B_M(N\bc,N\adbk{\d_1}/2)$ and running for
time~$u_*$, as defined above, leaves $B_M(N\bc,N\adbk{\d_1})$ with probability at most
\[
    q'_N \Def \lceil \r u_* \rceil \hz_N(\adbk{\d_1/4}).
\]
Processes $\bX^N$ and~$\hbX^N$ with the same starting point
$\bX \in B_M(N\bc,N\adbk{\d_1}/2)$ can be coupled so as to have identical paths until leaving $B_M(N\bc,N\adbk{\d_1})$, because their
transition rates are identical there, so that, if $\bX^N(0) = \hbX^N(0) = \bX \in B_M(N\bc,N\adbk{\d_1}/2)$,
\[
    \dtv\bigl(\law(\bX^N(u_*)),\law(\hbX^N(u_*))\bigr) \Le q'_N \Le k'' N^{-1/2},
\]
say, using a very generous bound for~$q'_N$.
This establishes the proposition, with $k_* = k'_* \mu + 2k''/c_0(M)$ \adbk{for $c_0(M)$ defined in~\Ref{ADB-CM-def}}, since
$\|\bX_1 - \bX_2\adbk{\|_M} \ge c_0(M)\bone\{\bX_1 \ne \bX_2\}$ for $\bX_1,\bX_2 \in S_N$}.
\end{proof}

\ignore{
It follows that, if $\bX^N_1$ and $\bX^N_2$ are copies of the process started in states
$\bX^N_1(0),\bX^N_2(0) \in B_M(N\bc,N\d_1/2)$ with $\|\bX^N_1(0) - \bX^N_2(0)\|_M \le N^{1/2}\eps/ k_*$,
then $d_{TV} (\law(\bX^N_1(u_*)),\law(\bX^N_2(u_*))) \le \eps$.
}

\section{Cutoff}\label{cutoff}
\setcounter{equation}{0}
Throughout the section, we assume that~\adbm{$\bX^N$} satisfies Assumptions 1--3.
We now use the results of Sections \ref{Density-dependent} and~\ref{concentration}  to show
that the distributions of~$\bX^N$ exhibit cutoff, thus proving the first part of Theorem~\ref{thm:main}.

From Theorem~\ref{ADB-conc-MPP}, for $\d_1 \le \d_0$ as in Remark~\ref{ADB-d_1-def},  
$\bX^N$ stays concentrated around its mean, if started in~$\cX^N(\d_1/4)$.  Our first step is to
show that the mean of~$N^{-1}\grbu{\bX}^N$ behaves like the solution of the deterministic equation~\Ref{eq:diff-eq}.

\begin{lemma}\label{mean-drift}
Let $\d_1$ be as in Remark~\ref{ADB-d_1-def}, and~$\a$ as specified in~\Ref{ADB-alpha-def1}.
Let $\by^N$ denote the solution of~\Ref{eq:diff-eq} with initial value~$\by^N_0 \in B_M(\bc,\d_1/4)$, and let
$\hby^N(t) := N^{-1}\ex_{N\by^N_0} \adbj{\bX\undi}(t)$, for $t\ge 0$.  Then there is a constant $C_{\ref{mean-drift}} > 0$,
and a positive integer $N_3$ such that, for all $0 < t \le \a N$ and $N \ge N_3$,
\[
    \|\hby^N(t) - \by^N(t)\|_M \Le C_{\ref{mean-drift}} N^{-1/2}.
\]
\end{lemma}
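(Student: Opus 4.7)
The plan is to show that $\hby^N$ approximately solves the same ODE as $\by^N$, with an additive perturbation of size $O(N^{-1/2})$, and then to use the $M$-contraction of Theorem~\ref{thm:main1} to propagate this into a uniform $O(N^{-1/2})$ bound on $\|\hby^N(t) - \by^N(t)\|_M$ over the full time interval $[0,\r^{-1}\a N]$.

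First I would establish an integral equation for $\hby^N$. The generator of $\bX^{N,\d_1/2}$ applied to the coordinate maps agrees with $NF$ except on the boundary layer $\{\bx\colon \|\bx-\bc\|_M > \d_1/2 - J^*_M/N\}$, on which the deviation is bounded by $N J^*_M R^*$ with $R^* := R^*(B_M(\bc,\d_1))$ as in~\Ref{L-R-eps-def}. Applying Proposition~\ref{ADB-3.5} with $\d = 3\d_1/8$ and $\d' = \d_1/4$, together with the choice of $\a$ in~\Ref{ADB-alpha-def1}, shows that $\bx^{N,\d_1/2}(u)$ stays in $B_M(\bc,3\d_1/8)$ throughout $[0,\r^{-1}\a N]$ with probability $1 - O(N^2 e^{-cN})$. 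Taking expectations in the martingale decomposition of Lemma~\ref{ADB-exp-bound-MG}, and absorbing the rare boundary contribution into a vector-valued error $\eta_N(t)$ with $\sup_{t \le \r^{-1}\a N}\|\eta_N(t)\|_M = O(N^3 e^{-cN})$, yields
$$\hby^N(t) \Eq \by^N_0 + \int_0^t \ex_{N\by^N_0} F(\bx^{N,\d_1/2}(u))\,du + \eta_N(t).$$

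Next I would control $\|\ex_{N\by^N_0} F(\bx^{N,\d_1/2}(u)) - F(\hby^N(u))\|_M$. Applying Theorem~\ref{ADB-conc-MPP} to each coordinate map $\bX \mapsto \bX_i$ (each $\|\cdot\|_M$-Lipschitz with constant $1/c_0(M)$) gives $\Var_{N\by^N_0} \bx^{N,\d_1/2}_i(u) \le v'/N$ for $1 \le i \le d$ and $0 < u \le \r^{-1}\a N$, so Jensen's inequality yields $\ex_{N\by^N_0}\|\bx^{N,\d_1/2}(u) - \hby^N(u)\|_M \le C_1 N^{-1/2}$. Since $F$ is Lipschitz on $B_M(\bc,\d_1)$ with constant $L := L(B_M(\bc,\d_1))$ from~\Ref{L-R-eps-def}, it follows that $\|\ex_{N\by^N_0} F(\bx^{N,\d_1/2}(u)) - F(\hby^N(u))\|_M \le C_2 N^{-1/2}$, with $C_2 := L C_1$.

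Finally, I would close the loop with contraction and Gronwall. Set $\bw^N(t) := \hby^N(t) - \by^N(t)$, so that $\bw^N(0) = \bzero$. Whenever $\hby^N(t),\by^N(t) \in B_M(\bc,\d_0)$, the computation leading to~\Ref{ADB-square-deriv} in the proof of Theorem~\ref{thm:main1}(ii) gives $2\langle \bw^N(t), F(\hby^N(t)) - F(\by^N(t))\rangle_M \le -2\r\|\bw^N(t)\|_M^2$. Combining with the estimate of the previous paragraph and dividing by $2\|\bw^N(t)\|_M$ at points where it is positive yields $\tfrac{d}{dt}\|\bw^N(t)\|_M \le -\r\|\bw^N(t)\|_M + C_2 N^{-1/2} + O(N^3 e^{-cN})$, and integrating from zero gives $\|\bw^N(t)\|_M \le C_{\ref{mean-drift}} N^{-1/2}$ uniformly in $t \in [0,\r^{-1}\a N]$. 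The main obstacle is the implicit assumption that $\hby^N(t)$ remains in $B_M(\bc,\d_0)$; this is dealt with by a bootstrap, noting that Theorem~\ref{thm:main1}(i) gives $\by^N(t) \in B_M(\bc,\d_1/4)$ for all $t \ge 0$, so that while the tentative bound $\|\bw^N(t)\|_M \le C_{\ref{mean-drift}} N^{-1/2}$ holds it places $\hby^N(t)$ inside $B_M(\bc,\d_1/4 + C_{\ref{mean-drift}} N^{-1/2}) \subset B_M(\bc,\d_0)$ for $N$ large, and a standard continuity argument extends the bound to the full interval.
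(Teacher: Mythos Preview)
Your proposal is correct and follows essentially the same approach as the paper: derive an approximate ODE for $\hby^N$ with an $O(N^{-1/2})$ perturbation by combining the boundary estimate of Proposition~\ref{ADB-3.5} with the concentration/variance bound of Theorem~\ref{ADB-conc-MPP}, and then close via the $M$-contraction of Theorem~\ref{thm:main1}. Your explicit bootstrap argument that $\hby^N(t)$ stays inside $B_M(\bc,\d_0)$ is a point the paper handles only implicitly, so your write-up is in fact slightly more careful there.
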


\begin{proof}
Since~\adbj{$\bX\undi$} is a Markov process on the finite state \adbj{space~$\cX^N(\d_1)$}, the flow~$\hby^N$ satisfies the
differential equation
\eqs
    \frac {d\hby^N}{dt}(t) &=& \ex_{N\by^N_0}\{F\uN(N^{-1}\adbj{\bX\undi}(t))\} \\
           &=&  F(\hby^N(t)) + \ex_{N\by^N_0}\{F\uN(N^{-1}\adbj{\bX\undi}(t)) - F(\hby^N(t))\},
\ens
where $F\uN$ denotes the drift function~$F$, as in~\Ref{eq:diff-eq}, except that all transitions
out of~$\cX^N(\adbj{\d_1})$ are suppressed.  Thus, easily,
\eqs
    \|F\uN(\bz) - F(\bz)\adbk{\|_M}
        &\le& \bone\{\|\bz - \bc\|_M > \d_1/2 - N^{-1}\Jmax\}  \sup_{\bz'\in B_M(\bc,\d_1)}\sJJ \|\bJ\adbk{\|_M} r_\bJ(\bz') \\
        &=:& C_1 \bone\{\|\bz - \bc\|_M > \d_1/2 - N^{-1}\Jmax\},
\ens
and hence
\eqs
  \lefteqn{\|\ex_{N\by^N_0}\{F\uN(N^{-1}\bX\und(t)) - F(\hby^N(t))\}\adbk{\|_M}} \\
    &\le& C_1 \pr_{N\by^N_0}[\|N^{-1}\bX\und(t) - \bc\|_M > \d_1/2 - N^{-1}\Jmax] \\
    &&\qquad\mbox{} +  \sup_{\bz'\in B_M(\bc,\d_1)}\|DF(\bz')\adbk{\|_M}\,\ex_{N\by^N_0}\|N^{-1}\bX\und(t) - \hby^N(t)\adbk{\|_M} .
\ens
Now, by Proposition~\ref{ADB-3.5} \adbm{with $\d' = \d_1/4$ and $\d = 7\d_1/16$},  if $N \ge 16\Jmax/\d_1$,
we have
$$
    \pr_{N\by^N_0}[\|n^{-1}\adbj{\bX\undi}(t) - \bc\|_M > \d_1/2 - N^{-1}\Jmax]
               \Le \adbd{\lceil \r\a N \rceil}\hz_N(3\d_1/\adbk{32}),
$$
for $0 \le t \le \a N$.
Then, using Theorem~\ref{ADB-conc-MPP},
\[
     \ex_{N\by^N_0}\|N^{-1}\adbj{\bX\undi}(t) - \hby^N(t)\adbk{\|_M} \Le C_2  N^{-1/2},\qquad  0 < t \le \a N,
\]
for a suitable constant~$C_2$. Hence we have
\eq\label{mean-de}
   \frac {d\hby^N}{dt}(t) \Eq F(\hby^N(t)) + \h^N(t),\qquad 0 < t \le \a N,
\en
where $\|\h^N(t)\|_M \le C_3 N^{-1/2}$ for all~$N$ large enough.

We now recall the arguments leading to \Ref{ADB-square-deriv} and~\Ref{ADB-mod-deriv}.
Writing $\bw^N := \hby^N - \by^N$, these can be used to
deduce that,
\[
     \frac d{dt}\|\bw^N(t)\|_M \Le -\r\|\bw^N(t)\|_M + C_3 N^{-1/2},
\]
if $\by^N_0 \in B_M({\bf c},\d_1/4)$ and $0 < t \le \a N$.  Integrating this differential inequality
gives the lemma, with $C_{\ref{mean-drift}} = C_3/\r$.
\end{proof}

Let $0 < \d < \d_0$.
Take any $\bX^N(0) \in \cX^N(\d)$ such that $\|N^{-1}\bX^N(0) - \bc\|_M > N^{-1/2}$.
For~$\by^N$ the solution of~\Ref{eq:diff-eq} with $\by^N(0) = \by^N_0 =: N^{-1}\bX^N(0)$,
recall from~\eqref{eq.cutoff-time} the definition of
\eq\label{ADB-tN-def}
    t_N(\by^N_0) \Def \inf\{t > 0\colon \|\adbb{\by^N}(t) - \bc\|_M = N^{-1/2}\}.
\en
We now prove that the distribution of~$\bX^N(t_N(\by^N_0))$ is well separated \adbm{from~$\pi\undi$},
at times $t_N(\by^N_0) + s$, for~$s$ sufficiently large negative.

\begin{theorem}\label{lower-bound}
Under the assumptions of this section,
suppose that $\bX^N(0) \in \cX^N(\d_1/4)$, write $\by^N_0 := N^{-1}\bX^N(0)$,
and define $t_N(\by^N_0)$ as in~\Ref{ADB-tN-def}.
Then there exist \adbe{positive constants $C_{\ref{lower-bound}}$ and $k_{\ref{lower-bound}}$, and a positive integer
$N_{\ref{lower-bound}}$} such that
\[
     \dtv\bigl(\law(\bX^N(t_N(\by^N_0)+s)),\adbj{\pi\undi}\bigr)
           \ \ge\ 1 - C_{\ref{lower-bound}}\adbe{\exp\bigl\{-k_{\ref{lower-bound}} e^{2\r |s|}\bigr\}},
\]
for all $-t_N(\by^N_0) \le s < 0$ and \adbe{$N \ge N_{\ref{lower-bound}}$}.
\end{theorem}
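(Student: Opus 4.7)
The plan is to exhibit a set $A_s \subset \Z^d$ that $\bX^N(t_N(\by^N_0) + s)$ hits with high probability but to which $\pi\undt$ assigns only small mass; the $e^{-2\r|s|}$ rate will emerge naturally from Chebyshev. The geometric input is that, by reading the contraction property Theorem~\ref{thm:main1}(i) backwards from $t_N$ to $t_N + s$ (with $s<0$),
\[
   \|\by^N(t_N + s) - \bc\|_M \ \ge\ \|\by^N(t_N) - \bc\|_M\, e^{-\r s} \Eq N^{-1/2} e^{\r|s|},
\]
so the deterministic trajectory recedes exponentially from $\bc$ as we look backward in time.

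First I would transfer this lower bound to the mean $\hby^N$ of the restricted process. Lemma~\ref{mean-drift} gives $\|\hby^N(t_N+s) - \by^N(t_N+s)\|_M \le C_{\ref{mean-drift}} N^{-1/2}$, so by the triangle inequality $\|\hby^N(t_N+s) - \bc\|_M \ge \tfrac12 N^{-1/2}e^{\r|s|}$ as long as $e^{\r|s|} \ge 2 C_{\ref{mean-drift}}$; for smaller $|s|$ the conclusion of the theorem is vacuous once $C_{\ref{lower-bound}}$ is chosen sufficiently large. Since $\bX \mapsto \|\bX - N\bc\|_M$ is convex, Jensen's inequality then yields
\[
   \ex_{\bX^N(0)} \|\bX\undt(t_N+s) - N\bc\|_M \ \ge\ \|N\hby^N(t_N+s) - N\bc\|_M \ \ge\ \tfrac12 N^{1/2}e^{\r|s|}.
\]

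Second, the variance estimate~\Ref{ADB-epidemic-var} applied to the $1$-Lipschitz function $f(\bX) := \|\bX - N\bc\|_M$ gives $\Var_{\bX^N(0)} f(\bX\undt(t_N+s)) \le Nv$, and Chebyshev's inequality produces
\[
   \Pr_{\bX^N(0)}\Bigl[\|\bX\undt(t_N+s) - N\bc\|_M < \tfrac14 N^{1/2} e^{\r|s|}\Bigr] \ \le\ 16 v\, e^{-2\r|s|}.
\]
Since $t_N(\by^N_0) = O(\log N)$, well within the $\r^{-1}\a N$ range, Proposition~\ref{ADB-3.5} shows that $\bX^N$ and $\bX\undt$ coincide on $[0, t_N+s]$ except on an event of probability exponentially small in $N$, so the same bound holds for the unrestricted $\bX^N$ up to a negligible additive correction.

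Third, setting $A_s := \{\bX \colon \|\bX - N\bc\|_M \ge \tfrac14 N^{1/2} e^{\r|s|}\}$, Theorem~\ref{ADB-equilibrium} bounds $\pi\undt(A_s)$ by an expression of Gaussian type in $e^{\r|s|}$, namely a constant times $\exp(-c\, e^{2\r|s|})$; since $u\, e^{-cu}$ is bounded on $[0,\infty)$, this is itself uniformly dominated by a constant multiple of $e^{-2\r|s|}$. Combining via
\[
   \dtv(\law(\bX^N(t_N + s)), \pi\undt) \ \ge\ \Pr_{\bX^N(0)}[\bX^N(t_N+s) \in A_s] - \pi\undt(A_s),
\]
and absorbing the various constants into $C_{\ref{lower-bound}}$, gives the required bound for all $N \ge N_4$. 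The main technical point is the bookkeeping between $\bX^N$ and $\bX\undt$ (and verifying that the $O(\log N)$ time horizon leaves room for Proposition~\ref{ADB-3.5}); the analytic point is that the $e^{-2\r|s|}$ rate arises through Chebyshev rather than exponential concentration, since the mean separation $\sqrt N\, e^{\r|s|}$ and the $\sqrt N$ spread of both distributions are of comparable scale when $|s|$ is not much larger than $(\log \log N)/2\r$.
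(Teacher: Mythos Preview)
Your proposal is correct and follows essentially the same route as the paper: exhibit the separating set $A_s = \{\bX\colon \|\bX - N\bc\|_M \ge \tfrac14 N^{1/2}e^{\r|s|}\}$, use Theorem~\ref{thm:main1}(i) read backwards plus Lemma~\ref{mean-drift} and the variance bound~\Ref{ADB-epidemic-var} with Chebyshev for the process side, and a concentration estimate for~$\pi\undt$. The only cosmetic difference is that for the equilibrium side the paper invokes the second-moment bound~\Ref{equilibrium-var} and Chebyshev directly (yielding $16v_\infty e^{-2\r|s|}$ in one line), whereas you go via the sharper Gaussian-type tail of Theorem~\ref{ADB-equilibrium} and then dominate it by $e^{-2\r|s|}$; your explicit use of Jensen and your bookkeeping between $\bX^N$ and $\bX\undt$ are points the paper leaves implicit.
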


\begin{proof}
For $s < 0$, we compare the distribution of~$\bX^N(t_N(\by^N_0)+s)$ with that of~$\adbj{\Xinf \sim \pi\undi}$.
\adbe{From Theorem~\ref{ADB-equilibrium}, we have
\eq\label{eqm-not-far-away}
    \P\bigl[\|\adbj{\Xinf} - N\bc\|_M > \quarter N^{1/2}e^{\r |s|}\bigr] \Le 
      \adbm{3}\hz_N(z_N(s)),
\en
where~$\hz_N$ is as defined in~\Ref{ADB-hz-def} and
\eq\label{ADB-zNs-def}
     z_N(s) \Def \frac{e^{\r|s|}(1 - e^{-1/2})}{\adbk{8\sqrt N}}\,.
\en
Note that,  from~\Ref{ADB-hz-bnd},
\eq\label{ADB-equilib-bnd}
    \hz_N(z) \Le 2d\exp\{-Nk_3 z^2\},
\en
for a suitable choice of constant~$k_3$, if $0 < z \le 1$.
Hence, from \Ref{eqm-not-far-away}, \Ref{ADB-zNs-def} and~\Ref{ADB-equilib-bnd}, it follows that
\eq\label{eqm-not-far-away-1}
    \P\bigl[\|\adbj{\Xinf} - N\bc\|_M > \quarter N^{1/2}e^{\r |s|}\bigr] \Le 
      k_4\exp\bigl\{- k_5 e^{2\r|s|}\bigr\},
\en
for all $N \ge n_1$  and all $-t_N(\by^N_0) < s < 0$, for a suitable choice of constants $k_4,k_5$ and~$n_1$.}

Then, by Theorem~\ref{thm:main1}(i), with~$\by^N$ the solution of~\Ref{eq:diff-eq} with initial
value~$\by^N_0$, we have
\[
    \|\adbb{\by^N}(t_N(\by^N_0)+s) - \bc\|_M \ \ge\ N^{-1/2}e^{\r |s|}, \quad -t_N(\by^N_0) \le s < 0.
\]
\adbj{Now, again by Theorem~\ref{thm:main1}(i), and from the definition of~$t_N(\by^N_0)$, it follows that
\eq\label{ADB-rho-tN-bnd}
   \r t_N(\by^N_0) \Le \frac1{2}\log N + \log(\d_1/4),  
\en
so that, with~$\a$ as in~\Ref{ADB-alpha-def1}, we have $t_N(\by^N_0) \le \a N$ for all~$N$ large enough.}
Hence we can apply Lemma~\ref{mean-drift} to show that, for $s < 0$,
\[
   \|\adbb{\hby^N}(t_N(\by^N_0)+s) - \bc\|_M \ \ge\ N^{-1/2}e^{\r |s|} - C_{\ref{mean-drift}}N^{-1/2}.
\]

\adbe{Finally, by Theorem~\ref{ADB-conc-MPP} with $f(X) := \|X - N\bc\|_M$ and $m := \half N^{1/2}e^{\r|s|}$,
if~$s$ is such that $e^{\r|s|} \ge 4C_{\ref{mean-drift}}$ and $- t_N(\by^N_0) \le s < 0$, we have
\eqa
   \lefteqn{\P\bigl[\|\bX^N(t_N(\by^N_0)+s) - N\bc\|_M \le \quarter N^{1/2}e^{\r |s|}\bigr] }\non\\
   && \Le 
       2\exp\bigl\{-k_6 e^{2\r|s|}\bigr\} + k_7N e^{-k_8N} ,
                \label{ADB-transient-bnd-1}
\ena
for all~$N\ge n_2$, for suitable constants $k_6$--$k_{8}$ and~$n_2$.
\adbj{Since~\Ref{ADB-rho-tN-bnd} implies that $e^{2\r|s|} \le N(\d_1/4)^2$ for $- t_N(\by^N_0) \le s < 0$,
we can simplify~\Ref{ADB-transient-bnd-1} to
\eq\label{ADB-transient-bnd}
   \P\bigl[\|\bX^N(t_N(\by^N_0)+s) - N\bc\|_M \le \quarter N^{1/2}e^{\r |s|}\bigr] \Le k_9 \exp\bigl\{-k_{10} e^{2\r|s|}\bigr\},
\en
for all $N \ge n_2$ and for suitable constants $k_9,k_{10}$.}
Thus, for $N \ge \max\{n_1,n_2\}$ and any~$s$ such that $-t_N(\by^N_0) \le s < 0$,
it follows from \Ref{eqm-not-far-away-1} and~\Ref{ADB-transient-bnd} that
\[
     \dtv\bigl(\law(\bX^N(t_N(\by^N_0)+s)),\pi\undt\bigr)
              \ \ge\ 1 - k_{11}\exp\bigl\{-k_{12} e^{2\r |s|}\bigr\},
\]
for suitable choices of $k_{11}$ and~$k_{12}$,
establishing the lower bound.} 
\end{proof}

We now establish a complementary upper bound on the mixing time.

\begin{theorem}\label{upper-bound}
Under the assumptions of this section,
suppose that $\bX^N(0) \in \cX^N(\d_1/4)$, write $\by^N_0 := N^{-1}\bX^N(0)$, and define $t_N(\by^N_0)$
as in~\Ref{ADB-tN-def}.
Then there exist positive constants $C_{\ref{upper-bound}}$, $\hC_{\ref{upper-bound}}$ and~$N_{\ref{upper-bound}}$
such that
\[
     \dtv\bigl(\law(\bX^N(t_N(\by^N_0)+s)),\pi\adbj{\undi}\bigr) \Le  C_{\ref{upper-bound}}\, e^{-\r s},
\]
for all $0 < s \le \r^{-1}\{\half\log N - \hC_{\ref{upper-bound}}\}$ and $N \ge N_{\ref{upper-bound}}$.
\end{theorem}


\begin{proof}
First, consider two copies $(\wX^N_1,\wX^N_2)$ \adbj{of~$\bX\undi$, coupled as in \Ref{ADBi-a1} and~\Ref{ADBi-a2},
and starting at any points $\bX,\bX' \in \cX^N(\d_1/2)$.
Recall the definitions of $\t_1$ and~$\s_1$ from~\Ref{ADB-tau-defs}.} If $H(\bX,\bX') \le \tilde{\nu} K_3$,
\adbc{with~$\tilde{\nu}$ as in~\Ref{ADB-tilde-nu-def},} it follows
from~\Ref{ADB-key-distance-bnd} and~\Ref{Psi-bnd} that
\eq\label{ADB-close-start}
    \adbj{\E_{\bX,\bX'}\{H(\wX^N_1(s \wedge \sdd_1),\wX^N_2(s \wedge \sdd_1))\} \Le  2C_{\Ref{ADBi-C-def}}/\hktt}
\en
for all~$s \ge 0$.  
If $H(\bX,\bX') > \tilde{\nu} K_3$,
it follows from~\Ref{ADB-big-tau-1} that
\eq\label{ADBi-a6}
   e^{\r s}\E_{\bX,\bX'}\{H(\wX^N_1(s),\wX^N_2(s))I[s \le (\t_1\wedge\sdd_1)]\} \Le H(\bX,\bX').
\en
Furthermore, if~$\cF_{\t_1}:= \s\{(\wX^N_1(u),\wX^N_2(u)),\,0\le u\le \t_1\}$, then, by the strong
Markov property and~\Ref{ADB-close-start},
\eqs
    \E_{\bX,\bX'}\{H(\wX^N_1(s),\wX^N_2(s))I[\t_1 \le s \le \sdd_1] \giv \cF_{\t_1}\} \Le \adbj{2C_{\Ref{ADBi-C-def}}/\hktt},
\ens
for $s > 0$, giving
\eq\label{ADBi-a5}
     \E_{\bX,\bX'}\{H(\wX^N_1(s),\wX^N_2(s))I[\t_1 \le s \le \sdd_1]\} \Le \adbj{2C_{\Ref{ADBi-C-def}}/\hktt}
\en
for such~$s$ also.  \adbj{Finally, for $0 \le s \le \a N$, with~$\a$ as defined in~\Ref{ADB-alpha-def1}, and for
$N \ge N_5$,
we have
\eqa
   \E_{\bX,\bX'}\{ H(\wX^N_1(s), \wX^N_2(s)) I[\sdd_1  \le s]\}
           &\le& 2N\d_1 \pr[\sdd_1 \le \a N] \non\\ &\le& 2N\d_1 a_1 a_2 N e^{-a_2 N},  \label{ADBi-a4}
\ena
from~\Ref{ADB-alpha-def0}.}

\adbj{Combining \Ref{ADB-close-start}--\Ref{ADBi-a4}, it follows that,  for $N \ge N_5$
large enough that $2N\d_1 a_1 a_2 N e^{-a_2 N} \le 2C_{\Ref{ADBi-C-def}}/\hktt$, we have
\eq\label{ADB-medium-term}
    \E_{\bX,\bX'}\{H(\wX^N_1(s),\wX^N_2(s))\} \Le e^{-\r s}H(\bX,\bX') + 4C_{\Ref{ADBi-C-def}}/\hktt,
\en
for all $0 \le s \le \a N$ and for all $\bX,\bX' \in \cX^N(\d_1/2)$.
}

\adbj{We now apply the bound in~\Ref{ADB-medium-term} to coupled copies $\wX^N_1$ and~$\wX^N_2$ of~$\bX\undi$,
the former with starting state
$\bX = \wX^N_1(0) = \bX^N(t_N(\by_0^N))$, and the latter starting in $\bX' = \wX^N_2(0)$, chosen as an
independent realization from~$\pi\undi$.}
 \adbj{Let~$A_N$ denote the event that both $\bX$ and~$\bX'$ belong to~$\cX^N(\adbk{\d_1}/2)$.
By Theorem~\ref{ADB-conc-MPP} for $\bX^N(t_N(\by^N_0))$, and by Theorem~\ref{ADB-equilibrium}
for~$\pi\adbj{\undi}$,
\[
     \pr[A_N^c] \Le a'_1 e^{-Na'_2},
\]
for suitable constants $a'_1$ and~$a'_2$, 
provided that~$N$ is sufficiently large.}  On the other hand, 
it follows from Proposition~\ref{ADB-close-TV} that there exist $k_*,u_* > 0$, not depending on~$s$,
such \adbj{that
\eqa
    \lefteqn{ \dtv\bigl(\law(\wX^N_1(s+u_*) \giv \bX,\bX'),\law(\wX^N_2(s+u_*)\giv \bX,\bX')\bigr) I[A_N] } \non\\
       &&\Le k_* N^{-1/2} \adbm{\ex_{\bX,\bX'}\{H(\wX^N_1(s),\wX^N_2(s))\}} \non\\
       &&\Le k_* N^{-1/2}\{e^{-\r s}H(\bX,\bX') + 4C_{\Ref{ADBi-C-def}}/\hktt\}, \label{ADB-full-term}
\ena
from}~\Ref{ADB-medium-term}.  Taking expectations, it thus follows that
\eqa
    \lefteqn{ \dtv\bigl(\law(\bX^N(t_N(\by^N_0)+s+u_*)),\pi\adbj{\undi} \bigr) } \non\\
    &\le& k_* N^{-1/2}\{e^{-\r s}(\E\|\bX - N\bc\|_M + \E\|\bX' - N\bc\|_M) \non\\
     &&\mbox{}\hskip1.5in         \adbj{+ 4C_{\Ref{ADBi-C-def}}/\hktt}\} + a'_1 e^{-Na'_2}. \label{ADBi-a15}
\ena
Note that, from Lemma~\ref{mean-drift}, \adbm{\Ref{ADB-ev2} and the definition of~$t_N(\by^N_0)$, since 
$\bX =  \bX^N(t_N(\by_0^N))$ and $\bX^N(0) = N\by_0^N \in \cX^N(\d_1/4)$, 
we have
\[
   \E\|\bX - N\bc\|_M \Le N^{1/2}\{d \sqrt v + C_{\ref{mean-drift}} + 1).
\]
Also,} \adbj{from~\Ref{equilibrium-var},} $\ex\|\bX' - N\bc\|_M^2 \le N \adbk{v_{\infty}} $.
\adbj{Using these bounds in~\Ref{ADBi-a15} gives the inequality}
\eqa
   \lefteqn{ \dtv\bigl(\law(\bX^N(t_N(\by^N_0)+s+u_*)),\pi\adbj{\undi} \bigr) } \non\\
   &\le& k_* N^{-1/2}\{e^{-\r s}\adbk{\sqrt N}( C_{\ref{mean-drift}} \adbm{+ 1 + d \sqrt{v}} + \sqrt{v_\infty})
       + \adbj{4C_{\Ref{ADBi-C-def}}/\hktt}\} \non\\
     &&\mbox{}\hskip2in           + a'_1 e^{-Na'_2}.
                \label{ADB-main-bnd-1}
\ena
 \adbj{Now, for $N$ large enough that $a'_1 e^{-Na'_2} \le k_* N^{-1/2}C_{\Ref{ADBi-C-def}}/\hktt$, and if~$s$ is such that
 $5N^{-1/2}C_{\Ref{ADBi-C-def}}/\hktt \le \adbk{e^{-\r s}}$, it follows from~\Ref{ADB-main-bnd-1} that
 \eq\label{ADB-main-bnd}
   \dtv\bigl(\law(\bX^N(t_N(\by^N_0)+s+u_*)),\pi\adbj{\undi} \bigr) \Le k_* \adbk{e^{-\r s}}
      ( C_{\ref{mean-drift}} + \adbm{2 + d \sqrt{v}} + \sqrt{v_\infty}).
 \en
Hence}, for $N_{\ref{upper-bound}}$ chosen large enough,
taking $\hC_{\ref{upper-bound}} := \adbj{\log\{5C_{\Ref{ADBi-C-def}}/\adbm{\hktt}\}}$ and
$0 \le s \le \r^{-1}\{\half\log N - \hC_{\ref{upper-bound}}\}$,
we have the desired upper bound
\eq\label{ADB-epi-upper-bnd}
    \dtv\bigl(\law(\bX^N(t_N(\by^N_0)+s)),\pi\adbj{\undi}\bigr) \Le C_{\ref{upper-bound}} e^{-\r s},
\en
with $C_{\ref{upper-bound}} := e^{\r u_*}\adbk{k_*}(C_{\ref{mean-drift}} + \adbm{2 + d \sqrt v} + \sqrt{v_\infty})$.
\end{proof}

For starting points outside~$\adbj{\cX^N(\d_1/4)}$, things are somewhat similar.
Let \adbj{$\by_0 \notin B_M(\bc,\d_1/4)$} belong to the basin of attraction~$\cB(\bc)$ of the fixed point~$\bc$
of the deterministic equations~\Ref{eq:diff-eq}, and consider the
path of the solution~$\adbm{\by_{[\by_0]}}$ of~\Ref{eq:diff-eq} from~$\by_0$ until the time $T := T(\by_0;\adbj{\d_1/8})$
at which it first reaches~$B_M(\bc,\adbj{\d_1/8})$.
Since~$\adbm{\by_{[\by_0]}(\cdot)}$ is a continuous function in $[0,T]$, and since it cannot hit a point outside~$\cB(\bc)$,
the quantity
\eq\label{ADBi-33}
   \adbm{\e(\by_0) \Def \inf_{0 \le t \le T}\{ \|\by_{[\by_0]}(t) - \cB(\bc)^c\|_M\}}
\en
is strictly positive.  \adbm{Defining $\e'(\by_0) := \min\{\e(\by_0),\d_1/16\}$, recalling the definition
of $\cY_\eps(\bx,T)$ from~\Ref{ADB-apr-1}, and}
applying Lemma~\ref{ADB-exp-bound-XN}
with $\cK := \cY_{\e(\by_0)}(\by_0,T)$ shows that, if~$\bx^N(0) = \by_0$, the path $(\bx^N(t),\,0\le t \le T)$
lies within~$\cY_{\adbj{\e'(\by_0)}}(\by_0,T)$ with very high probability, bounded below by
$1 - \z_{N,T,\cK}(\adbj{\e'(\by_0)} e^{-TL(\cK)})$.
On this event, $\|\bx^N(T) - \by(T)\|_M \le \e'(\by_0) \le \d_1/16$, and, since 
$\by(T) \in B_M(\bc,\adbj{\d_1/8})$, it follows that $\|\bX^N(T) - N\bc\|_M \le 3N\d_1/16$; hence
\[
  \pr[\bX^N(T) \notin B_M(N\bc,N\adbj{\d_1/4})] \Le \z_{N,T,\cK}(\adbj{\e'(\by_0)} e^{-TL(\cK)}).
\]
Theorems \ref{lower-bound} and~\ref{upper-bound} can now be applied to the subsequent evolution of~$\bX^N$
from time~$T$ onwards.
All that is required is to account for the difference between $t_N(\by_0) = T + t_N(\by(T))$
and $T + t_N(\bx^N(T))$, where, as above, $T := T(\by_0;\adbj{\d_1/8})$.

\begin{theorem}\label{ADB-general-starting-point}
Suppose that \adbm{$\by_0 \in \cB(\bc) \setminus B_M(\bc,\adbj{\d_1/4})$}.
Then there exists an \adbm{$\h = \h(\by_0) > 0$} such that, if $\bX^N(0) \in B\adbk{_M}(N\by_0,N\h)$, and
if~$t_N(\cdot)$ is defined as in~\Ref{ADB-tN-def}, then the following two statements hold.
\begin{description}
 \item[1] There exists positive constants $C'_{\ref{lower-bound}}$ and~$\th$ and a positive integer $N'_{\ref{lower-bound}}$ such that
\[
     \dtv\bigl(\law(\bX^N(t_N(\bx^N(0))+s)),\pi\adbj{\undi}\bigr)
         \ \ge\ 1 - C'_{\ref{lower-bound}}\, \adbe{\exp\bigl\{-k'_{\ref{lower-bound}} e^{-2\r |s|}\}},
\]
for all \adbe{$-\th\log N \le s < 0$} and $N \ge N'_{\ref{lower-bound}}$.

\medskip
\item[2] There exist positive constants $C'_{\ref{upper-bound}},\hC'_{\ref{upper-bound}}$ and $N'_{\ref{upper-bound}}$ 
such that
\[
     \dtv\bigl(\law(\bX^N(t_N(\bx^N(0))+s)),\pi\adbj{\undi}\bigr) \Le  C'_{\ref{upper-bound}} e^{-\r s},
\]
for all $0 < s \le \r^{-1}\{\half\log N - \hC'_{\ref{upper-bound}}\}$ and $N \ge N'_{\ref{upper-bound}}$.
\end{description}
\adbe{The constants $C'_{\ref{lower-bound}}$, $k'_{\ref{lower-bound}}$, $C'_{\ref{upper-bound}}$,
$\hC'_{\ref{upper-bound}}$, $N'_{\ref{lower-bound}}$ and $N'_{\ref{upper-bound}}$
may depend on the choice of~$\by_0$.}
\end{theorem}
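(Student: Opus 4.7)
The approach is the one sketched in the paragraph immediately preceding the theorem: use the deterministic flow to drive the process into $\cX^N(\d_1/4)$ over a fixed finite interval $[0, T^*]$, and then apply Theorems~\ref{lower-bound} and~\ref{upper-bound} via the Markov property at $T^*$. Set $T^* := T(\by_0; \d_1/8) + 1 < \infty$. By continuous dependence of the ODE flow on its initial data over the compact interval $[0, T^*]$, I can choose $\h = \h(\by_0) > 0$ so small that, for every $\bx_0 \in B(\by_0, 2\h)$, the trajectory $(\by_{[\bx_0]}(t))_{0 \le t \le T^*}$ lies in a fixed compact set $\cK_0 \subset \cB(\bc)$, $\by_{[\bx_0]}(T^*) \in B_M(\bc, \d_1/16)$, and $\|\by_{[\bx_0]}(T^*) - \bc\|_M \ge r^*_{\min}(\by_0) > 0$ (the lower bound inherited from~\Ref{ADB-other-bound}). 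Writing $\bu^* := \by_{[\bx_0]}(T^*)$, the semigroup property gives $t_N(\bx_0) = T^* + t_N(\bu^*)$. Applying Lemma~\ref{ADB-exp-bound-XN} on $[0, T^*]$ with $\cK = \cK_0$ and $\e = \d_1/32$ shows that the event $E_0 := \{\sup_{0 \le t \le T^*}\|\bx^N(t) - \by_{[\bx_0]}(t)\|_M \le \d_1/32\}$ has probability at least $1 - C e^{-c_5 N}$, with $c_5 = c_5(\by_0) > 0$; on $E_0$, $\bX^N(T^*) \in \cX^N(\d_1/4)$.

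Conditioning on $\bX := \bX^N(T^*)$ and writing $\bu := N^{-1}\bX$, Theorem~\ref{upper-bound} naturally delivers cut-off at $T^* + t_N(\bu)$ rather than at $t_N(\bx_0) = T^* + t_N(\bu^*)$, so everything rests on controlling $\D(\bu) := t_N(\bu^*) - t_N(\bu)$. Since $\bx^N(T^*)$ is only $O(1)$-close to $\bu^*$ with exponentially high probability, a uniform bound on $|\D|$ gives only $O(\log N)$, which is useless. The key device is a \emph{dyadic decomposition}: set $E_k^\D := \{2^k N^{-1/2} \le \|\bx^N(T^*) - \bu^*\|_M < 2^{k+1}N^{-1/2}\}$ for $k \ge 0$ and $E_{-1}^\D := \{\|\bx^N(T^*) - \bu^*\|_M < N^{-1/2}\}$. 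Using the quadratic-in-$z$ regime of Lemma~\ref{ADB-exp-bound-MG},
\[
  \Pr[\|\bx^N(T^*) - \bu^*\|_M > 2^k N^{-1/2}] \ \le\ \z_{N, T^*, \cK_0}(2^k N^{-1/2} e^{-T^* L(\cK_0)}) \Le C e^{-c_9 4^k}
\]
for relevant $k$, with $c_9 = c_9(\by_0) > 0$. On $E_k^\D$, Theorem~\ref{thm:main1}(ii) yields $\|\by_{[\bu]}(t) - \by_{[\bu^*]}(t)\|_M \le 2^{k+1}N^{-1/2} e^{-\r t}$; taking $t = t_N(\bu^*)$ and using $e^{-\r t_N(\bu^*)} \le 1$ gives $\|\by_{[\bu]}(t_N(\bu^*)) - \bc\|_M \le (1 + 2^{k+1})N^{-1/2}$, and contraction (Theorem~\ref{thm:main1}(i)) then guarantees that $\by_{[\bu]}$ reaches distance $N^{-1/2}$ within an additional time $\r^{-1}\log(1 + 2^{k+1})$. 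The reverse inequality is analogous, so $|\D(\bu)| \le \r^{-1}(k+2)\log 2$ uniformly on $E_k^\D$.

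For the upper bound (part~2), Theorem~\ref{upper-bound} applied to each conditional process gives $\dtv(\law_\bX(\bX^N(t_N(\bu^*) + s)), \pi\undt) \le C_3 e^{-\r(s + \D(\bu))} \le C_3 \cdot 2^{k+2} e^{-\r s}$ for $\bu \in E_k^\D \cap \cX^N(\d_1/4)$; averaging over the distribution of $\bX^N(T^*)$,
\[
  \dtv \Le C e^{-c_5 N} + C_3 e^{-\r s}\sum_{k \ge -1} 2^{k+2} \cdot C e^{-c_9 4^k},
\]
with the sum a finite constant. In the stated range $s \le \r^{-1}(\half\log N - C'_4)$, the $Ce^{-c_5 N}$ term is absorbed into $C'_3 e^{-\r s}$ by enlarging $C'_4$. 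For the lower bound (part~1), I would proceed as in Theorem~\ref{lower-bound}: condition on $\bX^N(T^*) \in \cX^N(\d_1/4)$ and apply Lemma~\ref{mean-drift} to the conditional process, then average over $\bX^N(T^*)$ (using smoothness of the flow in its initial condition together with $\E\|\bx^N(T^*) - \bu^*\|_M = O(N^{-1/2})$) to obtain $\|\E \bX^N(t_N(\bx_0)+s) - N\bc\|_M \ge \tfrac12 N^{1/2} e^{\r|s|}$ for $|s|$ exceeding an absolute constant; Theorem~\ref{ADB-conc-MPP} applied from time $T^*$ onwards together with Theorem~\ref{ADB-equilibrium} then separates $\law(\bX^N(t_N(\bx_0)+s))$ from $\pi\undt$ in total variation.

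The main obstacle is the random time shift $\D(\bu)$. A uniform bound on $|\D|$ based only on $E_0$ yields $O(\log N)$ and inflates the $e^{-\r s}$ factor of Theorem~\ref{upper-bound} by a polynomial in $N$, destroying the constant-window cut-off. The dyadic decomposition trades the probability $C e^{-c_9 4^k}$ of larger discrepancies against the amplification $e^{\r|\D|} \le 2^{k+2}$ they produce, and the super-exponential decay of the former in $k$ dominates the geometric growth of the latter, yielding a summable series and the required constant in $C'_3$.
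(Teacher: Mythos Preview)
Your argument is correct, but the dyadic decomposition is working harder than necessary because you have overlooked the paper's key simplification.  You bound the time shift on~$E_k^\D$ via the crude inequality $e^{-\r t_N(\bu^*)} \le 1$, which indeed gives $|\D| \lesssim k$ and forces the summation over~$k$.  The paper instead takes a \emph{single} threshold event $\{\|\bx^N(T) - \bu^*\|_M \le c\,N^{-1/2}\log N\}$ (probability $1 - O(e^{-c(\log N)^2})$ by Lemma~\ref{ADB-exp-bound-MG}) and then uses the lower bound $t_N(\bu^*) \ge (2\r^*)^{-1}\log N + O(1)$ coming from~\Ref{ADB-other-bound}.  The contraction in Theorem~\ref{thm:main1}(ii) over this long interval shrinks the $N^{-1/2}\log N$ discrepancy by an extra factor $e^{-\r t_N(\bu^*)} \le CN^{-\r/(2\r^*)}$, so that $\|\by_{[\bu]}(t_N(\bu^*)) - \bc\|_M = N^{-1/2}(1 + \ps_N)$ with $\ps_N = O(N^{-\r/(2\r^*)}\log N) \to 0$, and hence $|\D| \le \r^{-1}\log(1+\ps_N) \to 0$.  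With a vanishing time shift, Theorems \ref{lower-bound} and~\ref{upper-bound} apply on the complement of a single super-polynomially small event, and one simply absorbs $\ps_N$ into the constants.

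Your route buys a little robustness: it never invokes~\Ref{ADB-other-bound} and would survive if only an upper contraction rate were available.  The price is the extra bookkeeping of the dyadic sum, plus some care at large~$k$ (where $2^{k+1}N^{-1/2}$ is no longer small and $\bu$ may fall outside $B_M(\bc,\d_1/4)$; these levels have probability $e^{-cN}$ and can be lumped with the failure event, but you should say so).  For the lower bound, your plan of recomputing the unconditional mean via Lemma~\ref{mean-drift} and tower property, and the unconditional variance via the law of total variance plus~\Ref{ADB-epidemic-var}, is sound and in fact slightly cleaner than averaging conditional total-variation lower bounds, since the separating event $\{\|\bX - N\bc\|_M > \tfrac14 N^{1/2}e^{\r|s|}\}$ is fixed and Chebyshev applies directly.
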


\begin{proof}
Choose $\h_1 < \min\{\d_1,\e(\by_0)\}/(\adbk{16})$, and let $\cK_1 := \cY_{2\h_1}(\by_0,T)$,
where $T := T(\by_0;\adbj{\d_1/8})$ is as defined above.
Then, by a Gronwall argument similar to that in Lemma~\ref{ADB-exp-bound-XN}, all solutions~$\by := \by_{[\by']}$
to~\Ref{eq:diff-eq}
with $\by(0) = \by' \in B(\by_0,\h_1 e^{-L(\cK_1)T})$ satisfy
\eq\label{ADB-DE-solns-close}
   \|\by_{[\by']}(t) - \by_{[\by_0]}(t)\adbk{\|_M} \le \h_1 \mbox{ for all } 0 \le t \le T.
\en
The set
\[
     \cK_1' \Def \{\bz \in \re^d\colon\, \bz = \by_{[\by']}(t)
               \mbox{ for some } t \in [0,T], \by' \in B\adbk{_M}(\by_0,\h_1 e^{-L(\cK_1)T})\},
\]
being the image of the compact set $[0,T] \times B\adbk{_M}(\by_0,\h_1 e^{-L(\cK_1)T})$ 
under the map $(t,\by') \mapsto \by_{[\by']}(t)$, is
itself compact. 
\adbj{All points} in~$\cK_1'$ lie on trajectories of the system~\Ref{eq:diff-eq} starting in
$B(\by_0,\h_1 e^{-L(\cK_1)T})$, \adbj{and,} by~\Ref{ADB-DE-solns-close} and by the choice of~$\h_1$,
\adbj{all such trajectories} lie wholly
within~$\cB(\bc)$ up to time~$T$, \adbj{at which time they belong \adbm{to~$B_M(\bc,\d_1/4)$}. Thereafter, they are}
attracted to~$\bc$.  \adbj{Thus $\cK_1' \subset \cB(\bc)$, and so}
$\h_1' := \inf_{\by' \in \cK_1',\bz \notin \cB(\bc)} |\by' - \bz|$ is strictly positive, as is
therefore $\h_2 := \tfrac12\min\{\h_1,\h_1'\}$.  Defining
\[
    \t_N(\cK_1) \Def \inf\{t \ge 0\colon\, \bx^N(t) \notin \cK_1 \cap \cB(\bc)\},
\]
it thus follows from Lemma~\ref{ADB-exp-bound-XN} that, starting with $\bx^N(0) = \by'$
for any $\by' \in B\adbk{_M}(\by_0,\h_1 e^{-L(\cK_1)T})$, we \adbe{have
\eqa
    \lefteqn{\pr_{\by'}\bigl[\{\t_N(\cK_1) \le T\} \cup \{\|\bx^N(T) - \by_{[\bx^N(0)]}(T)\adbk{\|_M} > \h_2\}\bigr]} \non\\
              &&\Le \z_{N,T,\cK_1}(\h_2 e^{-TL(\cK_1)}) \Le b_1 e^{-Nb_2}\ =:\ \z_N',  \label{ADB-zNd-def}
\ena
for suitable constants $b_1$ and~$b_2$,}
where $\z_{N,t,\cK}$ is as defined in Lemma~\ref{ADB-exp-bound-MG}. 
In particular, except on an
\adbe{event~$E_1(N,T)$} of probability at most $\z_N'$, $\bx^N(T) \in B_M(\bc,\adbj{\d_1/4})$,
and conditional on the value of~$\bx^N(T)$,
Theorems \ref{lower-bound} and~\ref{upper-bound} can be applied to the process $(\bx^N(T+t),\,t \ge 0)$.
\adbe{In consequence,  except on the event~$E_1(N,T)$, conditional on the value of~$\bx^N(T)$,
\eq\label{ADB-repeat-1}
     \dtv\bigl(\law(\bX^N(T + t_N(\bx^N(T)) + s)\adbj{\giv \bx^N(T)}),\pi\adbj{\undi}\bigr)\adbj{I[E_1^c(N,T)]}
           \ \ge\ 1 - \D_1(s),
\en
for all $-t_N(\bx^N(T)) \le s < 0$ and $N \ge N_{\ref{lower-bound}}$, where
\[
   \D_1(s) \Def C_{\ref{lower-bound}}\,\exp\bigl\{-k_{\ref{lower-bound}} e^{2\r |s|}\bigr\};
\]
and
\eq\label{ADB-repeat-2}
     \dtv\bigl(\law(\bX^N(T + t_N(\bx^N(T)) + s)\adbj{\giv \bx^N(T)}),\pi\adbj{\undi}\bigr)\adbj{I[E_1^c(N,T)]} \Le  \D_2(s),
\en
for all $0 < s \le \r^{-1}\{\half\log N - C'_{\ref{upper-bound}}\}$ and $N \ge N_{\ref{upper-bound}}$,
where
\[
    \D_2(s) \Def C_{\ref{upper-bound}}\, e^{-\r s}.
\]
}

The bounds \Ref{ADB-repeat-1} and~\Ref{ADB-repeat-2} give information about the conditional distribution of~$\bX^N$
near the time $T+t_N(\bx^N(T))$, whereas, for the statements of the current theorem, times near
$t_N(\bx^N(0)) = T + t_N(\by_{[\bx^N(0)]}(T))$ are needed. However, from Lemma~\ref{ADB-exp-bound-XN},
\adbe{for $0 < \adbj{\chi} < 1/2$},
\eq\label{ADB-starting-different}
   \|\bx^N(T) - \by_{[\bx^N(0)]}(T)\adbk{\|_M} \Le 
             N^{-1/2 + \adbj{\chi}},
\en
except on an \adbe{event~$E_2(N,T,\adbj{\chi})$} of probability at most
 \[
     \z_{N,T,\cK_1}(N^{-1/2 + \adbj{\chi}} e^{-TL(\cK_1)})
               \Le \adbj{b_1(\chi)} \exp\{-\adbj{b_2(\chi)} N^{2\adbj{\chi}}\} \ =:\ \z_N''
 \]
for suitable $\adbj{b_1(\chi),b_2(\chi)} > 0$; note that $\z_N'' \ge \z_N'$ for all~$N$ large enough.
Now, by the definition~\Ref{ADB-tN-def},
\eqs
   \lefteqn{\|\by_{[\bx^N(0)]}(T + t_N(\by_{[\bx^N(0)]}(T))) \adbm{- \bc}\|_M}\\
          &&\Eq \|\by_{[\by_{[\bx^N(0)]}(T))]}(t_N(\by_{[\bx^N(0)]}(T))) \adbm{- \bc}\|_M \Eq N^{-1/2}.
\ens
\adbj{Then, from Theorem~\ref{thm:main1} (ii), comparing the evolution of the solutions to the differential
equations~\Ref{eq:diff-eq}
with starting points $\bx^N(T)$ and~$\by_{[\bx^N(0)]}(T)$ over a time interval of length $t_N(\by_{[\bx^N(0)]}(T))$,}
the inequality~\Ref{ADB-starting-different} in turn implies that,
\adbm{except on the event~$E_2(N,T,\adbj{\chi})$,}
\eq\label{ADBi-20}
    \bigl|\,\|\by_{[\bx^N(T)]}(t_N(\by_{[\bx^N(0)]}(T))) \adbm{- \bc}\|_M - N^{-1/2}\bigr|
             \Le 
                N^{-1/2+\adbj{\chi}} \exp\{-\r t_N(\by_{[\bx^N(0)]}(T))\}.
\en
Note also that, \adbm{defining $\bu_0 := \by_{[\bx^N(0)]}(T)$ and taking $\bw(\cdot) = \by_{[\bu_0]}(\cdot) - \bc$ 
in~\Ref{ADB-other-bound}, it follows that
\eqs
     N^{-1/2} \Eq \|\by_{[\bu_0]}(t_N(\bu_0)) - \bc\|_M  &\ge& \|\by_{[\bu_0]}(0) - \bc\|_M\, e^{-\r^* t_N(\bu_0)}\\
                       &=& \|\by_{[\bx^N(0)]}(T) - \bc\|_M\, e^{-\r^* t_N(\bu_0)}.
\ens
Since, from~\Ref{ADB-DE-solns-close},
$\bigl|\,\|\by_{[\bx^N(0)]}(T) \adbm{- \bc}\|_M - \adbj{\d_1/8}\bigr| \le \adbk{\h_1} < \adbj{\d_1/16}$,
it follows that $\|\by_{[\bx^N(0)]}(T) - \bc\|_M \ge \adbj{\d_1/16}$, and hence that
\[
   \exp\{-\r^* t_N(\by_{[\bx^N(0)]}(T))\} \Le 16N^{-1/2}/\d_1.  
\]
Thus, \adbj{from~\Ref{ADBi-20}}, we deduce} that,
except on the event~$E_2(N,T,\adbj{\chi})$,
\eqa
   \lefteqn{\bigl|\,\|\by_{[\bx^N(T)]}(t_N(\by_{[\bx^N(0)]}(T))) \adbm{- \bc}\|_M - N^{-1/2}\bigr|} \non\\
                 &&\Le N^{-1/2 + \adbj{\chi}} N^{-\r/(2\r^*)}\adbm{(16/\d_1)^{\r/\r^*}}
                 \ =:\ N^{-1/2}\ps_N\adbj{(\chi)}, \label{ADBi-21}
\ena
say, where we now choose $\adbj{\chi = \chi_0} < \r/(2\r^*)$, so that $\lim_{N \to \infty}\ps_N\adbj{(\chi_0)} = 0$.
Once again using Theorem~\ref{thm:main1} (i), this implies \adbe{that
\eqa
    |t_N(\by_{[\bx^N(0)]}(T)) - t_N(\bx^N(T))|
    &=& |t_N(\bx^N(0)) - (T + t_N(\bx^N(T))| \non\\
    &\le& \frac1{\r}\adbj{|\log(1 - \ps_N(\chi_0))|}, \label{ADBi-22}
\ena
except on the event~$E_2(N,T,\adbj{\chi_0})$}.

These considerations imply that
$T + t_N(\bx^N(T))$ can be replaced by $t_N(\bx^N(0))$ in the expressions \Ref{ADB-repeat-1} and~\Ref{ADB-repeat-2},
by changing some of the constants, if necessary.  First, $\pr[E_1(N,T) \cup E_2(N,T,\adbj{\chi_0})] \le 2\z_N''$
is smaller than a fixed multiple of~$\D_1(s)$ for
all~$s$ such that $k_{\ref{lower-bound}} e^{2\r |s|} \le \adbj{b_2(\chi_0)} N^{2\adbj{\chi_0}}$.
This can be achieved by restricting~$s < 0$
to have $|s| \le \th\log N$, for any choice of $\th < \adbj{\chi_0}/\r$ and then for $N \ge n_3(\th)$.
\adbj{Then the quantity~$2\z_N''$} is smaller than~$\D_2(s)$ for all
$0 < s \le \half\r^{-1}\log N$, if $N \ge n_4$, say.  Furthermore, the inequalities in \Ref{ADB-repeat-1} and~\Ref{ADB-repeat-2}
\adbj{are} preserved when changing $T + t_N(\bx^N(T))$ to $t_N(\bx^N(0))$, if~$N$ is large enough that
\adbj{\Ref{ADBi-22} ensures that} $|t_N(\bx^N(0)) - (T + t_N(\bx^N(T))| \le \r^{-1}$,
by replacing~$k_{\ref{lower-bound}}$ with $e^{-2}k_{\ref{lower-bound}}$ and~$C_{\ref{upper-bound}}$ with
$eC_{\ref{upper-bound}}$.
\adbj{The statements of the theorem now follow, by taking expectations in \Ref{ADB-repeat-1} and~\Ref{ADB-repeat-2}.}
\end{proof}

The statement of Theorem~\ref{thm:main} for $\d =\adbj{\d_1}$
is implied by combining those of Theorems \ref{lower-bound}, \ref{upper-bound}
and~\ref{ADB-general-starting-point}.  Any compact set $\cK \subset \cB(\bc) \setminus \{\bc\}$ is covered by
the union of $B_M(\bc,\d_1/2)$ and a {\it finite\/} collection of balls of the form $B\adbk{_M}(\by_0,\h(\by_0))$, with
$\by_0 \in \cK \setminus \mathrm{int}(B_M(\bc,\d_1/2))$. \adbe{So take the maximum of the values of the quantities
$C'_{\ref{lower-bound}}$, $C'_{\ref{upper-bound}}$ and~$\hC'_{\ref{upper-bound}}$ that appear for these
balls in Theorem~\ref{ADB-general-starting-point}, and of the
corresponding constants $C_{\ref{lower-bound}}$, $C_{\ref{upper-bound}}$ and~$\hC_{\ref{upper-bound}}$
from Theorems \ref{lower-bound} and~\ref{upper-bound}; and take the minimum of the values of
$k'_{\ref{lower-bound}}$ and of~$k_{\ref{lower-bound}}$.  Using these values,} Theorem~\ref{thm:main} is established.

The following remark shows that other choices of~$\d$ can be used instead; that is, other distributions~$\pi\und$
can also act as quasi-equilibrium distributions.

\begin{remark}\label{equilibrium-dists}{\rm
Taking $s := \r^{-1}\{\half\log N - C'_4\}$, \adbm{Theorem~\ref{ADB-general-starting-point}} implies that
     $\dtv\bigl(\law(\bX^N(t_N(\bx^N(0))+s)),\pi\adbj{\undi}\bigr)$ is of order~$O(N^{-1/2})$.
Thus it is possible to couple random vectors $\bX$ and~$\bX'$ with the distributions~$\pi\adbj{\undi}$
and $\law(\bX^N(t_N(\bx^N(0))+s))$,
respectively, on the same probability space, in such a way that \adbm{$\pr[\bX \neq \bX'] = O(N^{-1/2})$}.
Both of these distributions are well concentrated near~$N\bc$, in view of Theorems \ref{ADB-conc-MPP}
and~\ref{ADB-equilibrium}.
With these starting points, copies of processes $\bX^N(\cdot)$ and $\bX\adbj{\undi}(\cdot)$ can be run, with identical
transitions, until the boundary of~$\cX^N(\d_1/2)$ is reached.  By Proposition~\ref{ADB-3.5}, the probability of
this occurring before time~$T_N$ is of order $O(T_N e^{-Na})$, for some $a > 0$.  Hence the distribution of~$\bX^N(t)$
remains close to~$\pi\adbj{\undi}$ for times of length growing exponentially with~$N$.  An analogous coupling argument
shows that the distributions $\pi\adbj{\undi}$ and~$\pi\und$ are close, for~$N$ large enough, for any $0 < \d \le \adbm{\d_1}$.
Any such choice of~$\pi\und$ can act as a quasi-equilibrium for~$\bX^N$ in the neighbourhood of~$N\bc$.}
\end{remark}

\ignore{
Taking any $\by_0 \in \cX^N(\d)$, consider the process starting from $\bx_0 := \lfloor N\by_0 \rfloor$.
The deterministic solution~$\by$ starting at~$\by_0$ first satisfies $\|\by(t) - \by_0\|_M \le \d_1/4$
at a time $t_1(\by_0)$, which is finite, provided that~$\by_0$ is in the interior of the
basin of attraction of the fixed point~$\bc$.  At the time~$t_1(\by_0)$, using standard martingale techniques as before,
$X^N(t_1(\by_0))$ is of order $O(N^{1/2})$ close to $N\by(t_1(\by_0))$,
and belongs to~$\cX^N(\d_1/2)$ with probability at least $1 - O(e^{-N^{1/3}})$.
Thereafter, we can use the previous argument to establish cutoff.
}

\nin{\bf Example~1.}
\adbe{The simple immigration--death process, with unit immigration rate and unit {\it per capita\/}
death rate, can be formulated as a Markov population process~$X^N$
on~$\Z_+$, satisfying Assumptions 1 and~2 with $\cJ = \{-1,1\}$, $r_{-1}(x) = x$ and $r_1(x) = 1$;
the quantity~$N$ represents a typical population size.  The differential equation
\[
    \frac{dy}{dt} \Eq F(y) \Eq 1 - y
\]
has fixed point~$1$ and basin of attraction~$\re_+$, and $dF/dx = -1$ throughout~$\re_+$, so that~$\r$ can be
taken to be any value less than~$1$.  Because the process is one-dimensional, $\|\cdot\|_M$ can be taken to be
the Euclidean norm, and, for $|x-1| > N^{-1/2}$, we have $t_N(x) = \half\log N + \log|1-x|$, from~\Ref{eq.cutoff-time}.
Any value $\d_1 \in (0,1)$ satisfies the requirements of Remark~\ref{ADB-d_1-def}, with corresponding
value of $r_0 = 1-\d_1$, and $r_1 = 1$ in~\Ref{ADB-rate-shift}.
}

\adbe{
The interest in this example is that the cutoff can be explicitly examined, since~$X^N$ has equilibrium
distribution $\pi^N = \Po(N)$, \adbj{the Poisson distribution with mean~$N$,} and
\[
   \law(X^N(t) \giv X^N(0) = j) \Eq \Po(N(1-e^{-t})) * \Bi(j, e^{-t}),
\]
where \adbj{$\Bi(n,p)$ denotes the binomial distribution, and}~$*$ denotes convolution.
In particular, $\law(X^N(t) \giv X^N(0) = 0) = \Po(N(1-e^{-t}))$, and it is easy to see that the total
variation distance between the distributions $\Po(N(1-e^{-t}))$ and~$\Po(N)$ is attained on a set of the
form $E(k_N) := \{0,1,2,\ldots,k_N\}$.  As a result, the Berry--Esseen theorem, for large~$N$, can be used to
approximate $\dtv\bigl(\Po(N(1-e^{-t})),\Po(N)\bigr)$.  Since $t_N(0) = \half\log N$, we investigate times
$t = \half\log N + s$, for which $Ne^{-t} = N^{1/2}e^{-s}$, noting that then, for any $s' \in \re \cup \{+\infty\}$,
\[
     \lim_{N \to \infty}\sup_{x \in \re}|\Po(N - N^{1/2}e^{-s'})\{[0,N - xN^{1/2}]\} - \Phi(e^{-s'}-x)| \Eq 0,
\]
where~$\Phi$ denotes the standard normal distribution function. Hence, for $t = \half\log N + s$,
writing
\[
  \Delta_N(s,x) \Def \Po(N - N^{1/2}e^{-s})\{[0,N - xN^{1/2}]\} - \Po(N)\{[0,N - xN^{1/2}]\},
\]
and noting, as above, that $\sup_{x\in\re}\Delta_N(s,x) = \dtv\bigl(\Po(N- N^{1/2}e^{-s}),\Po(N)\bigr)$, we deduce that
\eqs
    \lefteqn{\lim_{N \to \infty} \dtv\bigl(\Po(N- N^{1/2}e^{-s}),\Po(N)\bigr) \Eq \sup_{x \in \re} \{\Phi(e^{-s}-x) - \Phi(-x)\} }\\
      &&\qquad\quad\ \Eq\Phi(\half e^{-s}) - \Phi(-\half e^{-s})
      \Eq 1 - 2\Phi(-\half e^{-s}). \phantom{XXXX}
\ens
Hence
\[
    \lim_{N \to \infty}\dtv\bigl(\Po(N- N^{1/2}e^{-s}),\Po(N)\bigr)
      \Eq 1 - 2\Phi(-\half e^{-s}).
\]

For~$s$ negative,
\eq\label{ADB-id-1}
   2\Phi(-\half e^{-s}) \Le \frac2{e^{|s|}}\,\sqrt{\frac2\pi}\,\exp\bigl\{-e^{2|s|}/8\bigr\},
\en
and, for~$s$ positive,
\eq\label{ADB-id-2}
  1 - 2\Phi(-\half e^{-s}) \Le e^{-s}/\sqrt{2\pi},
\en
both bounds being tight as $|s| \to \infty$.  The bound in~\Ref{ADB-id-1} is broadly comparable to the bound given in
Theorem~\ref{ADB-general-starting-point} for negative~$s$, as regards the form of the dependence on~$|s|$,
in that there $\r<1$ can be taken arbitrarily close to~$1$;
however, the values of the constants there may be worse, and there is no factor $e^{|s|}$ in the denominator.
The bound in~\Ref{ADB-id-2} is closely comparable to the bound given in Theorem~\ref{ADB-general-starting-point} for positive~$s$,
with~$\r<1$ taken close to~$1$, though again the constant there may be worse.
}

\bigskip

\nin{\bf Example~2.}
Let~$\bX^N$ be a two-dimensional process in continuous time, representing
an SIR epidemic with immigration of susceptibles (Hamer, 1906).
In state $(X_1,X_2)^T \in \Z_+^2$, there are $X_1$ susceptibles and~$X_2$ infectives.
From any state $(X_1,X_2)^T$, there are three possible transitions, whose rates are as follows:
\begin{eqnarray*}
  (X_1,X_2) &\to& (X_1-1,X_2+1)\ \mbox{ at rate }\ \alpha X_1 X_2 / N \\
  (X_1,X_2) &\to& (X_1+1,X_2)\ \mbox{ at rate }\ \beta N \\
  (X_1,X_2) &\to& (X_1,X_2-1)\ \mbox{ at rate }\ \gamma X_2.
\end{eqnarray*}
Here, $\alpha$, $\beta$ and $\gamma$ are fixed constants, and the parameter~$N$ is a
measure of the typical population size.
The first transition corresponds to an infection: a susceptible encounters an infective
and becomes infected.  The second transition corresponds to immigration of susceptibles into
the population.  The third transition
corresponds to the recovery of an infective, after which they become immune to the disease.
Although this process is transient, eventually drifting to infinity along the $1$-axis,
it exhibits a quasi-equilibrium distribution, in the sense of Barbour \& Pollett~(2012, Section~4).
Starting with $\bX^N(0) = \lfloor N\by \rfloor$, for any $\by := (y_1,y_2)^T$ with $y_2 > 0$,
the process with high probability approaches an apparent equilibrium distribution~$\pi^N$,
which persists for a time whose mean grows exponentially with~$N$.

In the notation of Section~\ref{Density-dependent},  the set $\cJ$ consists of the three
vectors $(-1,1)^T$, $(1,0)^T$ and $(0,-1)^T$.  The functions $r_{\bf J}({\bf y})$ are
given by $\alpha y_1 y_2$, $\beta$ and $\gamma y_2$, respectively, and
\[
   F(\by) \Eq \left(\begin{matrix}
                       -\alpha y_1 y_2 + \b \\ \alpha y_1 y_2 - \g y_2
                    \end{matrix} \right).
\]
The solution~$\bc$ of $F(\by) = \bo$,
the fixed point of the differential equation~\Ref{eq:diff-eq}, is
given by $(\g/\a,\b/\g)^T$. The matrix~$A = DF(\bc)$ is then
\[
  A \Def \begin{pmatrix}
      - \frac{\alpha\beta}{\gamma} & - \gamma \\
      \frac{\alpha\beta}{\gamma} & 0
   \end{pmatrix}.
\]

The equilibrium innovations matrix is given by
$$
    \s^2 \Def \sJJ \bJ \bJ^T r_\bJ(\bc) \Eq \b\begin{pmatrix}
      2 & - 1 \\
      - 1 & 2
   \end{pmatrix},
$$
from which the covariance matrix~$N\Sigma$ of the discrete normal approximation to $\pi^N$,
where  $A\Sigma + \Sigma A^T + \s^2 = 0$, can be deduced:
$$
    \Sigma  \Eq \begin{pmatrix}
      \frac{\g}{\alpha\beta}\Bigl(\b + \frac{\g^2}{\a}\Bigr) & - \frac{\gamma}{\a} \\
      - \frac{\gamma}{\a} & \frac1{\g}\Bigl(\b + \frac{\g^2}{\a}\Bigr)
   \end{pmatrix}.
$$
Then \adbb{\BLX~(2018a, Theorem~5.3 and 2018b, Theorem~2.3)} show that
\[
    \dtv\bigl(\pi^N,\DN_2(N\bc,N\Sigma)\bigr) \Eq O(N^{-1/2}\log N),
\]
\adbj{where~$\DN_2$ denotes a bivariate discrete normal distribution.}

The eigenvalues of~$A$ are
$$
    \lambda_1, \lambda_2 \Eq - \frac{\alpha \beta}{2 \gamma}
              \pm \frac12 \sqrt{ \alpha \beta \Bigl( \frac{\alpha \beta}{\gamma^2} - 4\Bigr) }.
$$
We can thus take~$\r$ to be any value smaller than
\[
   \wrho \Def \begin{cases}
                  \frac{\alpha \beta}{2 \gamma} -
                    \frac12 \sqrt{ \alpha \beta \Bigl( \frac{\alpha \beta}{\gamma^2} - 4\Bigr)}
                              &\ \mbox{if}\ \alpha \beta \ge 4 \g^2;\\
                  \frac{\alpha \beta}{2 \gamma}  &\ \mbox{otherwise}.
              \end{cases}
\]
\adbb{Assumptions 1--3} are easily seen to be satisfied, and Theorem~\ref{ADB-general-starting-point}
can be applied to show that~$\bX^N$ exhibits cutoff in its approach to \adbj{quasi-}equilibrium.



\section*{Acknowledgements}
We are pleased to thank a referee, who did a marvellous job on an earlier version of the paper.

\end{document}